\documentclass{amsart}
\usepackage{amsmath, amsthm, amssymb, amsfonts} 
\usepackage{enumerate}
\usepackage{verbatim}
\usepackage{esint}
\usepackage{color}
\usepackage{mathtools}
\usepackage{xcolor}
\usepackage[T1]{fontenc}

\usepackage{tikz,amsthm,amsmath,amstext,amssymb,amscd,epsfig,euscript, mathrsfs,
multicol,graphpap,graphics,graphicx,times,enumerate,
wrapfig,color,pict2e}
\usepackage{setspace}
\usepackage{caption}
\usepackage{subcaption}
\usepackage{hyperref}

\setlength{\textwidth}{\paperwidth}
\addtolength{\textwidth}{-2in}
\calclayout


\numberwithin{equation}{section}

\title{Infinitesimal splitting for spaces with thick curve families and Euclidean embeddings} 
\date{\today}

\author{Guy C. David}
\address{Department of Mathematical Sciences\\ Ball State University, Muncie, IN 47306}
\email{gcdavid@bsu.edu}

\author{Sylvester Eriksson-Bique}
\address{Department of Mathematics and Statistics, University of Jyv\"askyl\"a, P.O. Box 35 (MaD), FI-40014, Jyv\"askyl\"a, Finland}
\email{sylvester.d.eriksson-bique@jyu.edu}

\subjclass[2010]{30L05, (53C23, 49J52)}

\thanks{G.~ C.~ David was partially supported by the National Science Foundation under Grant No. DMS-1758709. S.~ Eriksson-Bique was partially supported by the National Science Foundation under Grant No. DMS-1704215. Eriksson-Bique is also thankful for IMPAN for hosting the semester ``Geometry and analysis in function and mapping theory on Euclidean and metric measure space" where part of this research was conducted. This work was also partially supported by the grant \#346300 for IMPAN from the Simons Foundation and the matching 2015-2019 Polish MNiSW fund.}

\begin{document}
\theoremstyle{plain}
\newtheorem{theorem}{Theorem}
\newtheorem{exercise}{Exercise}
\newtheorem{corollary}[theorem]{Corollary}
\newtheorem{scholium}[theorem]{Scholium}
\newtheorem{claim}[theorem]{Claim}
\newtheorem{lemma}[theorem]{Lemma}
\newtheorem{sublemma}[theorem]{Lemma}
\newtheorem{proposition}[theorem]{Proposition}
\newtheorem{conjecture}{theorem}
\newtheorem{maintheorem}{Theorem}
\newtheorem{maincor}[maintheorem]{Corollary}
\renewcommand{\themaintheorem}{\Alph{maintheorem}}

\theoremstyle{definition}
\newtheorem{fact}[theorem]{Fact}
\newtheorem{example}[theorem]{Example}
\newtheorem{definition}[theorem]{Definition}
\newtheorem{remark}[theorem]{Remark}
\newtheorem{question}[theorem]{Question}

\numberwithin{equation}{section}
\numberwithin{theorem}{section}

\newcommand{\cG}{\mathcal{G}}
\newcommand{\RR}{\mathbb{R}}
\newcommand{\HH}{\mathcal{H}}
\newcommand{\LIP}{\textnormal{LIP}}
\newcommand{\Lip}{\textnormal{Lip}}
\newcommand{\Tan}{\textnormal{Tan}}
\newcommand{\WTan}{\textnormal{WTan}}
\newcommand{\length}{\textnormal{length}}
\newcommand{\dist}{\textnormal{dist}}
\newcommand{\diam}{\textnormal{diam}}
\newcommand{\vol}{\textnormal{vol}}
\newcommand{\rad}{\textnormal{rad}}
\newcommand{\side}{\textnormal{side}}

\def\bA{{\mathbb{A}}}
\def\bB{{\mathbb{B}}}
\def\bC{{\mathbb{C}}}
\def\bD{{\mathbb{D}}}
\def\bR{{\mathbb{R}}}
\def\bS{{\mathbb{S}}}
\def\bO{{\mathbb{O}}}
\def\bE{{\mathbb{E}}}
\def\bF{{\mathbb{F}}}
\def\bH{{\mathbb{H}}}
\def\bI{{\mathbb{I}}}
\def\bT{{\mathbb{T}}}
\def\bZ{{\mathbb{Z}}}
\def\bX{{\mathbb{X}}}
\def\bP{{\mathbb{P}}}
\def\bN{{\mathbb{N}}}
\def\bQ{{\mathbb{Q}}}
\def\bK{{\mathbb{K}}}
\def\bG{{\mathbb{G}}}

\def\nrj{{\mathcal{E}}}
\def\cA{{\mathscr{A}}}
\def\cB{{\mathscr{B}}}
\def\cC{{\mathscr{C}}}
\def\cD{{\mathscr{D}}}
\def\cE{{\mathscr{E}}}
\def\cF{{\mathscr{F}}}
\def\cB{{\mathscr{G}}}
\def\cH{{\mathscr{H}}}
\def\cI{{\mathscr{I}}}
\def\cJ{{\mathscr{J}}}
\def\cK{{\mathscr{K}}}
\def\cL{{\mathscr{L}}}
\def\Layer{{\rm Layer}}
\def\cM{{\mathscr{M}}}
\def\cN{{\mathscr{N}}}
\def\cO{{\mathscr{O}}}
\def\cP{{\mathscr{P}}}
\def\cQ{{\mathscr{Q}}}
\def\cR{{\mathscr{R}}}
\def\cS{{\mathscr{S}}}
\def\Up{{\rm Up}}
\def\cU{{\mathscr{U}}}
\def\cV{{\mathscr{V}}}
\def\cW{{\mathscr{W}}}
\def\cX{{\mathscr{X}}}
\def\cY{{\mathscr{Y}}}
\def\cZ{{\mathscr{Z}}}

\def\co{\colon}

  \def\del{\partial}
  \def\diam{{\rm diam}}
  \def\supp{{\rm supp}}
  
    \def\Curv{{\rm Curv}}
    \def\dom{{\rm dom}}
		  \def\im{{\rm im}}
			 \def\len{{\rm len}}
    \def\Geo{{\rm Geo}}
    
      \def\Frag{{\rm Frag}}
			    \def\Lines{{\rm Lines}}
			  \def\Cone{{\rm Cone}}
				
	\def\VV{{\mathcal{V}}}
	\def\FF{{\mathcal{F}}}
	\def\QQ{{\mathcal{Q}}}
	\def\BB{{\mathcal{B}}}
	\def\XX{{\mathcal{X}}}
	\def\PP{{\mathcal{P}}}
	\def\M{{\mathbb{M}}}

\def\image{{\rm Image}}
	\def\domain{{\rm Domain}}
	\def\Mod{{\rm Mod}}
	\def\Tan{{\rm Tan}}
  \def\dist{{\rm dist}}
	\def\dim{{\rm dim}}
	\def\cdim{{\rm cdim_{AR}}}
	\newcommand{\Gr}{\mathbf{Gr}}
\newcommand{\md}{\textnormal{md}}
\newcommand{\Haus}{\textnormal{Haus}}

\begin{abstract}
We study metric measure spaces that admit ``thick'' families of rectifiable curves or curve fragments, in the form of Alberti representations or curve families of positive modulus. We show that such spaces cannot be bi-Lipschitz embedded into any Euclidean space unless they admit some ``infinitesimal splitting'': their tangent spaces are bi-Lipschitz equivalent to product spaces of the form $Z\times \RR^k$ for some $k\geq 1$. We also provide applications to conformal dimension and give new proofs of some previously known non-embedding results.
\end{abstract}
\maketitle

\section{Introduction}

Many natural problems in analysis on metric spaces involve studying spaces that support ``thick'' families of rectifiable curves (or curve fragments), in one sense or another. In this paper, we show that such spaces cannot be bi-Lipschitz embedded into any Euclidean space \textit{unless} they admit some ``infinitesimal splitting'': their tangent spaces are bi-Lipschitz equivalent to product spaces of the form $Z\times \RR^k$ for some $k\geq 1$. Our methods are relatively direct and admit a number of consequences.

\subsection{Background}
In 1999, Cheeger \cite{Ch} proved a deep extension of Rademacher's theorem (Lipschitz functions are differentiable almost everywhere) to certain abstract metric measure spaces. These are the so-called \textit{PI spaces}, those that are doubling and support a Poincar\'e inequality in the sense of \cite{HK}. As one of many consequences, he showed that if a PI space admits a bi-Lipschitz embedding into some Euclidean space, then its tangent spaces must be bi-Lipschitz equivalent to Euclidean spaces almost everywhere: 
\begin{theorem}[Cheeger \cite{Ch}, Theorem 14.1]\label{thm:cheeger}
Let $(X,\mu)$ be a PI space. Suppose that $X$ admits a bi-Lipschitz embedding into some $\RR^n$. Then for $\mu$-almost-every $x\in X$, there is an integer $k\geq 1$ such that every tangent space $(Y,y)\in\Tan(X,x)$ is bi-Lipschitz equivalent to $\RR^k$.
\end{theorem}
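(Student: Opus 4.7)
The plan is to combine Cheeger's differentiation theorem for Lipschitz functions on PI spaces with a direct infinitesimal analysis of the embedding. Cheeger's theorem produces a countable measurable decomposition $X = \bigsqcup_{\alpha} U_\alpha$ and Lipschitz \emph{chart maps} $\phi_\alpha \co U_\alpha \to \RR^{N_\alpha}$ such that every Lipschitz $f \co X \to \RR$ admits, at $\mu$-a.e.\ $x \in U_\alpha$, a unique derivative $d_x f \in \RR^{N_\alpha}$ satisfying
$$f(y) = f(x) + \langle d_x f,\, \phi_\alpha(y) - \phi_\alpha(x)\rangle + o(d(x,y)).$$
I would take this as the black-box input and exploit it for the given bi-Lipschitz embedding $F = (F_1,\dots,F_n) \co X \to \RR^n$.

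Apply Cheeger's theorem coordinatewise to $F_1,\dots,F_n$. Assembling the derivatives gives, at $\mu$-a.e.\ $x \in U_\alpha$, a linear map $A(x) \co \RR^{N_\alpha} \to \RR^n$ with
$$F(y) = F(x) + A(x)\bigl(\phi_\alpha(y) - \phi_\alpha(x)\bigr) + o(d(x,y)).$$
Next, pass to a tangent $(Y,y)\in \Tan(X,x)$ at such a differentiability and $\mu$-Lebesgue point. A standard Arzel\`a--Ascoli blow-up along the scales $r_j \to 0$ defining $Y$ extracts Lipschitz limit maps $\hat\phi \co Y \to \RR^{N_\alpha}$ and $\hat F \co Y \to \RR^n$ from the rescalings $r_j^{-1}(\phi_\alpha - \phi_\alpha(x))$ and $r_j^{-1}(F - F(x))$, and the differentiation identity passes to the limit to give the \emph{exact} factorization $\hat F = A(x)\circ \hat\phi$ on all of $Y$.

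The bi-Lipschitz hypothesis on $F$ forces $\hat F$ to be bi-Lipschitz from $Y$ onto $\hat F(Y)\subset \RR^n$, with constants controlled by those of $F$. The factorization $\hat F = A(x)\circ \hat\phi$ then cheaply forces $\hat\phi$ to be bi-Lipschitz from $Y$ onto $\hat\phi(Y)\subset \RR^{N_\alpha}$: the lower bound on $\hat\phi$ comes from the lower bound on $\hat F$ combined with the operator norm of $A(x)$, and the upper bound is just the Lipschitz constant of $\phi_\alpha$. In particular, $Y$ bi-Lipschitz embeds into $\RR^{N_\alpha}$. To finish, one sets $k = N_\alpha$ and must verify that the image $\hat\phi(Y)$ is \emph{all} of $\RR^{N_\alpha}$, so that $Y$ is bi-Lipschitz equivalent to the full Euclidean $\RR^{N_\alpha}$, and that the resulting $k$ depends only on $x$ (the chart index) and not on the particular tangent.

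The main obstacle is precisely this final surjectivity step. Injectivity and bi-Lipschitz control of $\hat\phi$ are essentially formal consequences of the factorization and the hypothesis on $F$, but ruling out an infinitesimal collapse of $\phi_\alpha$ onto a proper subset of $\RR^{N_\alpha}$ is not. This is where the deeper content of Cheeger's structure theory enters: the $N_\alpha$ coordinate functions of a chart map are \emph{infinitesimally linearly independent} $\mu$-a.e., because otherwise the minimality of the chart dimension (proved via minimal generalized upper gradients and the algebraic structure of differentials) would be violated. Invoking that minimality at $x$ shows $\hat\phi$ is surjective onto $\RR^{N_\alpha}$, completing the proof. Tracking this minimality carefully, and checking that the various almost-everywhere statements can be arranged to hold simultaneously with the Lebesgue point hypotheses needed for blow-ups, is where most of the real work lies.
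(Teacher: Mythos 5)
The paper only cites Cheeger's Theorem 14.1 and does not re-prove it, so I am assessing your proposal against the argument in \cite{Ch}. Your overall architecture is right: blow up both the embedding $F$ and the chart map $\phi_\alpha$ at a generic point, pass the Cheeger differentiation identity to the exact factorization $\hat F = A(x)\circ\hat\phi$ on the tangent $Y$, and use the bi-Lipschitz lower bound on $\hat F$ together with $\|A(x)\|<\infty$ to force $\hat\phi$ to be bi-Lipschitz onto its image; that is indeed where the embedding hypothesis does its work, and your identification of surjectivity of $\hat\phi$ as the remaining crux is correct.

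But the justification you give for surjectivity is not sufficient. Minimality of the chart dimension $N_\alpha$ (a.e.\ linear independence of the differentials $d_x\phi_\alpha^j$, or of their minimal generalized upper gradients) rules out $\hat\phi(Y)$ lying in a proper \emph{linear subspace} of $\RR^{N_\alpha}$, but it does not preclude $\hat\phi(Y)$ from being a proper subset of $\RR^{N_\alpha}$ that nonetheless spans it, for instance the complement of an open ball. There is no formal implication from ``independent differentials at $x$'' to ``surjective blow-up at $x$.'' What actually delivers surjectivity is the blow-up theory of \cite[Section 13]{Ch}, which is the deepest analytic ingredient of the theorem and is independent of the embedding: at $\mu$-a.e.\ $x$, the tangent carries a limiting measure $\nu_\infty$, the components of $\hat\phi$ are generalized linear with constant minimal upper gradients, and the pushforward $\hat\phi_*\nu_\infty$ is a positive multiple of Lebesgue measure on $\RR^{N_\alpha}$, whence $\hat\phi$ is onto. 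Establishing that pushforward fact uses the Poincar\'e inequality in an essential way (it supplies the connectivity needed for $Y$ to fill out all of $\RR^{N_\alpha}$), not merely the algebraic minimality of the chart dimension. As written, your outline leaves this crucial step as a vague invocation and would not constitute a proof without that input being supplied.
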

(Here, the notion of ``tangent'' is in the pointed Gromov-Hausdorff sense; see section \ref{subsec:tangents}.)

In other words, to admit a bi-Lipschitz embedding into a Euclidean space, a PI space must itself be infinitesimally Euclidean. (Extensions of this are known, see \cite[Theorem 14.2]{Ch} and more recent results in \cite{CK_Banach, CK_Banach2, Schioppa, GCD, CKS, DK}.)

Since we know of many abstract PI spaces that are \textit{not} infinitesimally Euclidean, this consequence of Cheeger's result can be viewed as a generalized non-bi-Lipschitz embedding theorem, i.e., a checkable criterion for a space to admit no bi-Lipschitz embedding into any Euclidean space.

A Poincar\'e inequality is sufficient but not necessary to prove a Rademacher-type theorem in metric spaces, and hence a non-embeddability criterion like Theorem \ref{thm:cheeger}. Indeed, a number of weaker sufficient conditions implying Cheeger's Rademacher theorem have been found since its discovery \cite{KeithLD, Bate, Schioppa}. Most importantly for our purposes, Bate \cite{Bate}, building on work of Alberti \cite{Alberti} and Alberti-Cs\"ornyei-Preiss \cite{ACP}, showed that Cheeger's differentiable structure is equivalent to the presence of a \textit{universal family} of \textit{Alberti representations}. An Alberti representation of a measure is a decomposition into $1$-rectifiable measures supported on fragments of rectifiable curves (see Definition \ref{def:alberti}). For example, Fubini's theorem gives simple Alberti representations of Lebesgue measure on $[0,1]^2$.

Essentially, Bate shows that if a space supports a ``large enough'' family of independent Albert representations, then it supports a Rademacher theorem for Lipschitz functions, from which one can deduce an analog of the non-embeddability criterion Theorem \ref{thm:cheeger}.

Given the above, it is natural to ask whether there are conditions, weaker than any of those studied above, that are not strong enough to yield a Rademacher-type theorem but that still prevent bi-Lipschitz embeddings.

In this paper, we answer this question by studying spaces that support a single Alberti representation (or more generally $k$ independent independent Alberti representations), but not necessarily enough to form a ``universal'' family in Bate's sense, and thus not necessarily enough to yield a differentiable structure for Lipschitz functions.

Nonetheless, we show that such smaller families of Alberti representations still strongly constrain the ability of the space to bi-Lipschitz embed into any Euclidean space. The following is our main theorem.

\begin{theorem}\label{thm:mainthm}
Let $X\subseteq \RR^n$ be a closed set supporting a doubling Radon measure $\mu_0$. Suppose that a non-trivial Radon measure $\mu\ll\mu_0$ supports $k$ independent Alberti representations, for some $k\geq 1$.

Then for $\mu$-almost-every $x\in X$, there is a $k$-dimensional vector subspace $V\subseteq \RR^n$ with the following property: 

Every intrinsic tangent $Y\in\Tan_{\RR^n}(X,x)$ of $X$ at $x$ is a product $Z\times V$, for some closed set $Z\subseteq V^{\bot}\subseteq \RR^n$.
\end{theorem}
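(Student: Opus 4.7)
The plan is to use each Alberti representation to produce a tangent direction in $\RR^n$ at $\mu$-a.e.\ point, assemble these into a $k$-dimensional subspace $V(x_0)$, and show that any intrinsic tangent $Y$ of $X$ at such a point is invariant under translation by $V(x_0)$, whence the product structure $Y = Z \times V(x_0)$ follows automatically. Concretely, each of the $k$ Alberti representations of $\mu$ decomposes it as $\int \gamma_\#(\mathcal{L}^1|_{\dom \gamma})\, d\mathbb{P}_i(\gamma)$ over Lipschitz curve fragments $\gamma \colon \dom \gamma \subseteq \RR \to X \subseteq \RR^n$. By Rademacher's theorem applied in $\RR^n$, $\gamma$ is differentiable at $\mathcal{L}^1$-a.e.\ $t \in \dom \gamma$; disintegrating, I obtain a $\mu$-measurable direction field $v_i \colon X \to S^{n-1}$ recording the unit tangent vector of the relevant fragment at each point. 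The independence hypothesis, reformulated as the $v_i$ taking values in pairwise linearly independent Borel cones in $\RR^n$, forces $v_1(x_0), \ldots, v_k(x_0)$ to be linearly independent at $\mu$-a.e.\ $x_0$, spanning a $k$-plane $V(x_0)$.

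\emph{Lusin/Egoroff reduction.} Applying Lusin and Egoroff to this disintegrated data, I would pass to a compact set $E \subseteq \supp \mu$ with $\mu(X \setminus E)$ arbitrarily small such that each $v_i$ is continuous on $E$; through every $x \in E$ passes a fragment $\gamma$ of the $i$-th representation with $x = \gamma(t_x)$ for some Lebesgue density point $t_x$ of $\dom \gamma$; and for some uniform $\ell > 0$ these fragments extend arclength at least $\ell$ on each side of $t_x$, with image contained in a prescribed $\varepsilon$-tube around the segment $x + [-\ell, \ell]\, v_i(x)$. Fix a $\mu$-density point $x_0 \in E$. The doubling of $\mu_0$ combined with $\mu \ll \mu_0$ (so that $d\mu/d\mu_0 > 0$ at $\mu$-a.e.\ point) ensures that $E$ is Hausdorff-dense in $X$ near $x_0$ in the following sense: for every $y_j \in X$ with $|y_j - x_0| = O(r_j)$, one can find $y_j' \in E$ with $|y_j' - y_j| = o(r_j)$.

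\emph{Lines in the tangent.} Fix $Y \in \Tan_{\RR^n}(X, x_0)$ arising from scales $r_j \to 0$. For any $y \in Y$, choose $y_j \in X$ with $(y_j - x_0)/r_j \to y$ and, by the density above, replace $y_j$ by a point of $E$ with the same limit. For each $i$, the fragment $\gamma_j$ of the $i$-th representation through $y_j$ has image within Hausdorff distance $\varepsilon_j r_j$ of the segment $y_j + [-\ell, \ell]\, v_i(x_0)$, with $\varepsilon_j \to 0$ by continuity of $v_i$ at $x_0$ and the uniform estimates on $E$. Rescaling by $1/r_j$ and sending $j \to \infty$ places the entire line $y + \RR v_i(x_0)$ inside $Y$. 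Iterating over $i$, the closed set $Y$ is invariant under translation by $V(x_0)$; any closed subset of $\RR^n$ invariant under a linear subspace $V$ is automatically of the form $Z \times V$ with $Z \subseteq V^\perp$ closed, yielding the theorem.

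\emph{Main obstacle.} The delicate step is the Lusin/Egoroff reduction, which must simultaneously arrange on a single set $E$ of nearly full $\mu$-measure that (a) every point of $E$ sits at a Lebesgue density point of some fragment's parameter domain, (b) the direction field $v_i$ is continuous at $x$, and (c) each fragment extends a uniform positive arclength past that parameter value with image close to a straight segment in direction $v_i(x)$. Because fragments have possibly gappy compact parameter domains, controlling the Hausdorff behaviour of the image (rather than merely the measure) near a density point requires fairly careful bookkeeping. Layered on top is the transfer between measure-theoretic density, which is the natural habitat of an Alberti representation, and the set-theoretic Hausdorff density needed to realise arbitrary points of $Y$ as limits of points in $E$; this is where the doubling of $\mu_0$ is used. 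The remaining ingredients --- disintegration, Rademacher, and the observation that translation invariance implies product structure --- are routine.
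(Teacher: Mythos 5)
The central gap is your reformulation of the independence hypothesis. In Definition \ref{def:independent}, $\phi$-independence of the $k$ Alberti representations means that there is a Lipschitz map $\phi\colon X\to\RR^m$ and independent cones $C_1,\dots,C_k\subseteq\RR^m$ with $(\phi\circ\gamma)'(t)\in C_i$; the constraint lives in the \emph{target} $\RR^m$ of $\phi$, not in the ambient $\RR^n$. Since $\phi$ need not be classically differentiable at $x_0$ (and need not be the inclusion), the tangent vectors $v_i(x_0)=\gamma_i'(0)\in\RR^n$ are \emph{not} a priori constrained to lie in independent cones, and two fragments with the same $\RR^n$-direction at $x_0$ can have derivatives $(\phi\circ\gamma_i)'(0)$ in different, independent cones. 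So the sentence ``the independence hypothesis, reformulated as the $v_i$ taking values in pairwise linearly independent Borel cones in $\RR^n$, forces $v_1(x_0),\dots,v_k(x_0)$ to be linearly independent'' is exactly the nontrivial point, and it is not justified. The paper's Lemma \ref{lem:ARblowup} establishes linear independence of the $v_i$ by carrying $\phi$ through the blowup: one obtains a tangent pair $(Y,\hat\psi)\in\Tan_{\RR^n}(X,\psi,x)$ on which $\hat\psi$ is linear along each line $L_i^y$ with slope $w_i=(\psi\circ\gamma_i)'(0)$, and a nontrivial dependence $\sum a_iv_i=0$ then forces $\sum a_iw_i=\hat\psi(0)=0$, contradicting the independence of the $w_i$. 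Your argument never passes $\phi$ to the tangent, so it cannot run this contradiction and, as written, only produces a subspace $V(x_0)$ of some dimension $\le k$.

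Your remaining ideas line up with the paper's proof under different names, and would largely work with care. The Lusin/Egoroff reduction producing fragments through $\mu$-a.e. point with derivatives at density points is, in effect, what Bate's Proposition \ref{prop:bate} supplies in one step (Proposition \ref{prop:bate} is what the paper uses). Your ``$E$ is Hausdorff-dense near $x_0$'' step is the paper's Proposition \ref{prop:basepoint} in disguise: the correct statement is that at a point $x_0$ where both $d\mu/d\mu_0(x_0)>0$ and the Lebesgue point property for $d\mu/d\mu_0$ with respect to the doubling measure $\mu_0$ holds, $x_0$ is also an outer $\mu_0$-density point of $E$, so $\Tan_{\RR^n}(E,x_0)=\Tan_{\RR^n}(X,x_0)$ by Lemma \ref{lem:tangentprops}(vi); this lets you move the basepoint inside the tangent. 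That part is fine (and is a reasonable, more elementary substitute for Preiss's principle), but you should state it in terms of outer $\mu_0$-density rather than the loose ``Hausdorff-dense''. To close the real gap, you would need to run the same Egoroff construction simultaneously for the maps $\phi\circ\gamma_i$, producing a tangent map $\hat\phi$ that is affine along the resulting lines, and then reproduce the paper's linear-independence contradiction.
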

For a vector subspace $V \subset \RR^n$, its orthogonal complement is denoted by $V^{\bot}. $ An ``intrinsic tangent'' of a subset $X\subseteq\RR^n$ is simply a limit of rescalings of $X$ centered at a fixed basepoint. For a precise definition, see section \ref{subsec:tangents} below.

It is easy to recast Theorem \ref{thm:mainthm} as a result that constrains bi-Lipschitz embeddings of metric spaces:

\begin{corollary}\label{cor:bilip}
Let $X$ be a complete metric space supporting a doubling Radon measure $\mu_0$. Suppose that a non-trivial Radon measure $\mu\ll\mu_0$ supports $k$ independent Alberti representations, for some $k\geq 1$.

If $X$ admits a bi-Lipschitz embedding into some Euclidean space, then for $\mu$-almost-every $x\in X$ and every tangent $(Y,y)\in\Tan(X,x)$, $Y$ is bi-Lipschitz equivalent to a product $Z\times \RR^k$, for some complete metric space $Z$.
\end{corollary}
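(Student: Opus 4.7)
The plan is to deduce Corollary \ref{cor:bilip} from Theorem \ref{thm:mainthm} by transferring the hypotheses across the given bi-Lipschitz embedding.

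First, let $f : X \to \RR^n$ be the bi-Lipschitz embedding and set $X' := f(X)$. Since $X$ is complete and $f$ is bi-Lipschitz, $X'$ is closed in $\RR^n$. I would push the measures forward to $X'$, setting $\mu_0' := f_*\mu_0$ and $\mu' := f_*\mu$; the doubling property of $\mu_0'$ follows because bi-Lipschitz images of balls contain and are contained in Euclidean balls of comparable radius, and $\mu' \ll \mu_0'$. The next step is to show that $\mu'$ still supports $k$ independent Alberti representations. A Lipschitz curve fragment in $X$ maps under $f$ to a Lipschitz curve fragment in $X'$, so the decomposition of $\mu$ pushes forward to a decomposition of $\mu'$ along rectifiable curve fragments. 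Cone independence in the abstract sense transfers to cone independence in $\RR^n$ after possibly shrinking the cones by a factor depending on the bi-Lipschitz constant, since $f$ distorts direction vectors only by a bounded amount.

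Once these hypotheses are verified, I would apply Theorem \ref{thm:mainthm} to $(X', \mu_0', \mu')$. This yields a $\mu'$-conull set $E' \subseteq X'$ such that for each $x' \in E'$ there is a $k$-dimensional subspace $V \subseteq \RR^n$ for which every intrinsic tangent $Y' \in \Tan_{\RR^n}(X', x')$ is of the form $Z \times V$ with $Z \subseteq V^{\bot}$ closed. To transfer the conclusion back, observe that $E := f^{-1}(E')$ is $\mu$-conull and that for any $x \in E$, any pointed Gromov-Hausdorff tangent $(Y,y) \in \Tan(X,x)$ arises as the limit of rescalings of $X$ at $x$; applying $f$ to the same sequence of rescalings (with the same radii) produces, after passing to a subsequence, a tangent of $X'$ at $f(x)$ that is bi-Lipschitz equivalent to $(Y,y)$ with constant equal to that of $f$. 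For closed subsets of $\RR^n$, intrinsic Euclidean tangents coincide with their pointed Gromov-Hausdorff tangents, so this tangent splits as $Z \times V$, which is isometric to $Z \times \RR^k$. Composing the bi-Lipschitz equivalences yields the conclusion.

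The main obstacle I expect is the preservation of independent Alberti representations under the embedding, along with correctly reconciling the notion of ``independence'' on each side: in the abstract space $X$ it is typically phrased in terms of cones of velocities defined intrinsically, while Theorem \ref{thm:mainthm} works with cones in $\RR^n$. All remaining ingredients -- the measure-theoretic transport, doubling, absolute continuity, and the identification of metric tangents of $X'$ with intrinsic Euclidean tangents for closed subsets of $\RR^n$ -- are standard.
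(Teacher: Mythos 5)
Your overall architecture matches the paper's: push $\mu_0$ and $\mu$ forward under the embedding $f$, transfer the Alberti representations, apply Theorem \ref{thm:mainthm} to $X'=f(X)\subseteq\RR^n$, and transfer the splitting of tangents back to $X$ via tangent maps of $f$. That much is exactly what the paper does (via Lemma \ref{lem:ARbilip} and then Lemma \ref{lem:tangentprops}(iv)). The last step of reconciling $\Tan_{\RR^n}(X',x')$ with $\Tan(X,x)$ is also handled the same way.

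However, the justification you give for the key step--transferring independence of the Alberti representations--does not work. You claim that ``cone independence in the abstract sense transfers to cone independence in $\RR^n$ after possibly shrinking the cones by a factor depending on the bi-Lipschitz constant, since $f$ distorts direction vectors only by a bounded amount.'' A bi-Lipschitz map does \emph{not} distort directions by a bounded amount in any useful sense: for example, the linear shear $(x,y)\mapsto(x+My,y)$ is bi-Lipschitz but sends $(0,1)$ to a vector whose direction is nearly $(1,0)$, so a cone about $(0,1)$ is not mapped into any shrunken or bounded-distortion version of itself. More fundamentally, it is not clear that the velocities $(f\circ\gamma)'(t)$ of the pushed-forward fragments lie in any cone at all, and trying to certify independence using the identity chart on $X'\subseteq\RR^n$ runs into exactly this issue. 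The correct and much simpler observation, which is what the paper's Lemma \ref{lem:ARbilip} does, is to keep the same target $\RR^m$ and the \emph{same cones} $C_1,\dots,C_k$, but replace the chart $\phi\colon X\to\RR^m$ by $\phi\circ f^{-1}\colon X'\to\RR^m$. Then for each pushed-forward fragment $\alpha=f\circ\gamma$ one has $(\phi\circ f^{-1}\circ\alpha)'(t)=(\phi\circ\gamma)'(t)$, which is literally the same velocity vector as before, so the $\phi$-directions are unchanged and no cone needs to be modified. Since Theorem \ref{thm:mainthm} only requires independence with respect to \emph{some} Lipschitz chart, this choice resolves the difficulty cleanly.
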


Corollary \ref{cor:bilip} gives a simple non-bi-Lipschitz embedding criterion that applies to a wider class of examples than Theorem \ref{thm:cheeger}. 

Theorem \ref{thm:mainthm} and Corollary \ref{cor:bilip} are proven without recourse to any more general metric measure Rademacher theorem. Rather, their proofs rely only on one of the preliminary results of Bate's paper \cite{Bate} (that Alberti representations induce ``partial derivatives'' almost everywhere) -- see Proposition \ref{prop:bate} -- and an adaptation of a principle of Preiss \cite{Preiss} about the structure of the space of tangent objects, Proposition \ref{prop:basepoint}.

\begin{remark}
We note that there are many other conditions, independent from any of those discussed above, that prevent or constrain bi-Lipschitz embeddability. In particular, the results of \cite{La00} and \cite{OS14} rest also, remarkably, on studying a single family of curves that is ``thick'' in some quantitative sense (although different than the senses used here). This is part of a larger program to characterize metric spaces embedding into the so-called RNP Banach spaces, and conversely to characterize RNP-Banach spaces by the spaces embedding into them; see \cite{OSRN}.
\end{remark}

\subsection{Corollaries of the main results}\label{subsec:corollaries}

We view Corollary \ref{cor:bilip} as a  generalized non-embedding result. Since ``thick" families of curves arise in many settings, it has a number of specific consequences.

\subsubsection{Modulus}
From our perspective, the most important consequences of Corollary \ref{cor:bilip} involve its relationship with a well-known way of measuring the ``thickness'' of a family of curves $\Gamma$ in a metric measure space $(X,\mu)$. This is the \textit{$p$-modulus} of the family ($p\geq 1$), which we denote $\Mod_p(\Gamma,\mu)$. This notion plays a central role in the modern theory of analysis on metric spaces \cite{He,HKST}. (We give a precise definition in Section \ref{sec:modulus}.)

In Proposition \ref{prop:modmeasure} and Corollary \ref{cor:modmeasure} below, we show that path families of positive modulus induce non-trivial Alberti representations. This idea is essentially already contained in work of the second-named author and his collaborators \cite{Durand} and the proof below reworks that argument in a slightly different context. This is also closely related to the results of \cite{Amb} and \cite{exnerova2019plans}. The formulation in Proposition \ref{prop:modmeasure} is slightly different and applies more directly to our setting. (To link modulus and Alberti representations, we also use ideas of Keith \cite{Keith} and Bate \cite[Corollary 5.8]{Bate}.)

As a consequence of Proposition \ref{prop:modmeasure} and Corollary \ref{cor:bilip}, we obtain the following result for spaces that contain curve families of positive modulus.

\begin{corollary}\label{cor:positivemodulus}
Let $X$ be a complete metric space admitting a Radon measure $\mu$ that is absolutely continuous with respect to a doubling measure $\mu_0$. Suppose that $X$ contains family $\Gamma\subseteq\Curv(X)$ of non-constant curves with $\Mod_p(\Gamma,\mu)>0$ for some $p \in [1,\infty)$.

If $X$ admits a bi-Lipschitz embedding into some Euclidean space, then there exists a non-trivial Radon measure $\mu'\ll\mu$, such that, for $\mu'$-almost every $x \in X$, every tangent $Y$ of $X$  at $x$ is bi-Lipschitz to a product $Z \times \bR$, for some complete metric space $Z$.
\end{corollary}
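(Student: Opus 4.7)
The plan is to deduce Corollary \ref{cor:positivemodulus} as a short composition of two earlier results: Proposition \ref{prop:modmeasure} (with its companion Corollary \ref{cor:modmeasure}), which translates the positive modulus hypothesis into the existence of an Alberti representation, and Corollary \ref{cor:bilip} with $k=1$, which then constrains the tangent structure of any Euclidean embedding. The argument reduces to a two-line chain of implications; the actual technical content lies in the cited results.

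First I would apply Proposition \ref{prop:modmeasure} to the family $\Gamma$ equipped with the measure $\mu$. Because $\Mod_p(\Gamma,\mu) > 0$ and the curves in $\Gamma$ are non-constant, this proposition produces a non-trivial Radon measure $\mu' \ll \mu$ on $X$ that admits an Alberti representation whose supporting curve fragments are (subarcs of) curves in $\Gamma$. The underlying mechanism, which I treat as a black box, is the standard duality between $p$-modulus and measures on curve space: an $L^p$-minimizing admissible density for $\Mod_p(\Gamma,\mu)$ produces, via a Hahn--Banach / Riesz-type argument, a positive Borel measure on $\Curv(X)$ whose length disintegration yields both $\mu'$ and its Alberti representation.

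Next, since $\mu' \ll \mu \ll \mu_0$ and $\mu_0$ is a doubling Radon measure, $\mu'$ is itself absolutely continuous with respect to a doubling measure. A single Alberti representation trivially counts as ``$k=1$ independent Alberti representations,'' so every hypothesis of Corollary \ref{cor:bilip} is met. The assumed bi-Lipschitz embedding of $X$ into some Euclidean space then lets me invoke Corollary \ref{cor:bilip} with this $\mu'$ and $k=1$. The conclusion is that, for $\mu'$-almost every $x \in X$ and every tangent $(Y,y) \in \Tan(X,x)$, $Y$ is bi-Lipschitz equivalent to a product $Z \times \RR$ for some complete metric space $Z$, which is exactly the statement of the corollary.

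The main obstacle I expect is purely bookkeeping at the interface with Proposition \ref{prop:modmeasure}: one must confirm that the Radon measure it produces is genuinely non-trivial (which should follow quantitatively from the strict positivity of $\Mod_p(\Gamma,\mu)$) and that its Alberti representation is carried by non-constant curve fragments, so that ``$k=1$'' is not satisfied only degenerately. Both points are forced by the hypothesis on $\Gamma$, but they are the only places where the argument does any real work beyond quoting earlier results.
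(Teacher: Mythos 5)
Your proposal is correct and follows essentially the same route as the paper: translate positive modulus into an Alberti representation via Proposition \ref{prop:modmeasure}/Corollary \ref{cor:modmeasure}, then apply Corollary \ref{cor:bilip} with $k=1$. The one place you gloss slightly is the statement that a single Alberti representation ``trivially'' counts as $k=1$ independent ones; strictly, independence requires a Lipschitz $\phi$ with $(\phi\circ\gamma)'(t)\neq 0$ a.e., which is why the paper invokes Corollary \ref{cor:modmeasure} to produce a $\phi$-independent representation directly for the bi-Lipschitz embedding $\phi$ (using that fragments are bi-Lipschitz, so $(\phi\circ\gamma)'\neq 0$ automatically). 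This is a minor bookkeeping point, not a gap, and the rest of your chain of implications matches the paper's.
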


For many metric measure spaces, it is known, or not difficult to check, that they contain a path family of positive modulus and do not have tangents that split as $Z\times\RR$. Therefore such metric spaces cannot bi-Lipschitz embed into any Euclidean space. We give some applications of this argument below.

\subsubsection{Conformal dimension}
An important problem in metric geometry is to understand the \textit{conformal dimension} of a metric space, a quasisymmetric invariant first introduced by Pansu \cite{P89}, and much used since \cite{MT}. There are a number of variations of this quantity, but we focus on the \textit{Ahlfors regular conformal dimension}. This is a variant first named by Bonk-Kleiner in \cite{BK04}, where they attribute the idea to Bourdon-Pajot \cite{BourdonPajot}.

We recall that a metric space $X$ is \textit{Ahlfors $Q$-regular} if there is a constant $C\geq 1$ such that
$$ C^{-1}r^Q \leq \mathcal{H}^Q(\overline{B}(x,r)) \leq Cr^Q \text{ for all } r\leq \diam(X),$$
where $\mathcal{H}^Q$ denotes the $Q$-dimensional Hausdorff measure. 

The Ahlfors regular conformal dimension of $X$ measures the infimal dimension $Q$ of all Ahlfors regular quasisymmetric deformations of $X$:

\begin{definition}
The Ahlfors regular conformal dimension of a metric space $X$ is
\begin{equation}\label{eq:cdim}
 \cdim(X) = \inf\{ Q : Y \text{ is Ahlfors } Q\text{-regular and quasisymmetric to } X\}.
\end{equation}
\end{definition}
We refer the reader to \cite[Ch. 10]{He} for a precise definition of quasisymmetric mappings, and to \cite{BK04, MT} for more background on the Ahlfors regular conformal dimension, which we now discuss briefly. 

By definition, the Ahlfors regular conformal dimension and its variations are quasisymmetric invariants. They have thus played an important role in geometric group theory and quasiconformal geometry, and their properties are connected to many deep questions. We refer to \cite{MT} for a book-length account of many of these connections. 

In particular, it is a difficult problem to understand for which metric spaces the conformal dimension is actually achieved as a minimum. This problem is closely related to an approach to Cannon's conjecture initiated by Bonk and Kleiner \cite{BonkICM, BK04}.

As an example, it known that $\cdim(S)$ is strictly less than the Hausdorff dimension of the standard Sierpi\'nski carpet $S$ \cite{KL}, but not whether the infimum in \eqref{eq:cdim} is achieved by some Ahlfors regular space $Y$ when $X=S$. See \cite{MT, BK04} for additional details. (The exact value of $\cdim(S)$ is also a well-known open problem.)

Complicating the problem further, even if the conformal dimension of a subset $X\subseteq \RR^N$ is achieved by a space $Y$ quasisymmetric to $X$, there is no reason that $Y$ should be a subset of Euclidean space. Below, we show that this non-embedding phenomenon should be expected quite generally.

We need the following important result of Keith and Laakso. 
\begin{theorem}[Keith-Laakso \cite{KL}, Corollary 1.0.2]\label{thm:KL}
Let $Q \geq 1$ and let $X$ be a complete, Ahlfors $Q$-regular metric space. 

Then $\cdim(X)=Q$ if and only if there is a weak tangent of $X$ that contains a family of non-constant curves with positive $p$-modulus, for some $p\geq 1$.
\end{theorem}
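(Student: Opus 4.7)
My plan is to prove the two implications by rather different arguments.

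For the reverse direction, suppose some weak tangent $T$ of $X$ contains a non-constant curve family of positive $p$-modulus. Since $X$ is Ahlfors $Q$-regular, so is every weak tangent $T$ (this follows from the scaling behavior of Hausdorff measure). The first step is to argue that conformal dimension is lower semicontinuous under weak tangents, i.e., $\cdim(T) \leq \cdim(X)$: given any Ahlfors $Q'$-regular quasisymmetric image $Y$ of $X$, one passes to weak tangents along corresponding basepoints to obtain an Ahlfors $Q'$-regular quasisymmetric image of $T$, and concludes $\cdim(T) \leq Q'$. The second step is Tyson's theorem on modulus and conformal dimension: in an Ahlfors $Q$-regular space, the existence of a non-constant curve family of positive modulus at any exponent forces conformal dimension to equal $Q$; this uses quasisymmetric invariance of positivity of modulus combined with the fact that in a $Q'$-regular space a non-constant positive-modulus family forces $Q' \geq Q$. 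Combining these two steps yields $\cdim(X) \geq \cdim(T) = Q$, while the reverse inequality is immediate from Ahlfors $Q$-regularity.

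For the forward direction, assume no weak tangent of $X$ admits a non-constant curve family with positive modulus at any exponent. The plan is to construct an explicit quasisymmetric, Ahlfors $Q'$-regular deformation of $X$ for some $Q' < Q$, thereby showing $\cdim(X) < Q$. Following a combinatorial strategy, I would fix a nested sequence of maximal $2^{-n}$-nets $N_n \subset X$ and choose weights $w_n\co N_n \to (0,\infty)$ using the vanishing-modulus hypothesis: the absence of positive modulus in all tangents provides, at each scale, coverings of relevant curve families by balls whose aggregate $p$-th power weight contribution is arbitrarily small, and a compactness argument on the space of pointed tangents (using doubling and completeness of $X$) transfers this into a uniform, scale-by-scale decay of weights on $X$ itself. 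Defining a new distance $\rho$ by setting $\rho(x,y)$ equal to the infimum of $\sum_i w_{n_i}(x_i)\diam(B_i)$ over discrete chains of balls $B_i$ connecting $x$ to $y$ at scales adapted to $d(x,y)$, one checks that the identity $(X,d) \to (X,\rho)$ is quasisymmetric and that $(X,\rho)$ is Ahlfors $Q'$-regular for some $Q' < Q$.

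The main obstacle is the forward implication. Converting a purely qualitative vanishing-modulus hypothesis into quantitative, scale-uniform decay of weights requires both delicate combinatorics and Gromov--Hausdorff compactness across the collection of all pointed tangents. Moreover, ensuring that the resulting weighted metric is simultaneously quasisymmetric to the original and Ahlfors regular at a strictly smaller exponent requires balancing competing constraints, which is where the technical heart of the Keith--Laakso argument lies.
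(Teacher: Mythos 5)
The paper does not prove Theorem~\ref{thm:KL} at all: it is a direct citation of Keith--Laakso~\cite{KL}, with a parenthetical remark that this exact formulation follows from the version stated in~\cite{KL} (see p.~1279 there). So there is no internal proof to compare against, and indeed a reader of this paper would not expect one, since the result is a substantial standalone theorem and not part of the paper's original contribution.

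That said, your sketch is worth a few remarks on its own terms. The ``reverse'' direction you outline is sound in structure: weak tangents of Ahlfors $Q$-regular spaces are again $Q$-regular; conformal dimension can only decrease under passing to weak tangents (quasisymmetries and regularity both pass to tangents); and positive modulus in a $Q$-regular space pins the conformal dimension at $Q$. For that last step you should be a bit careful about the exponent --- the hypothesis gives positive $p$-modulus for \emph{some} $p\geq 1$, and Tyson's theorem is typically stated for $p=Q$; for $p<Q$ one upgrades using H\"older on bounded pieces, and the $p>Q$ case needs its own argument --- but the route is correct. The ``forward'' direction, however, is the genuine content of Keith--Laakso, and your outline does not constitute a proof: the passage from ``no weak tangent carries positive modulus'' to ``uniform, scale-by-scale decay of (combinatorial) modulus'' and from there to an Ahlfors-regular quasisymmetric deformation of strictly smaller dimension is a delicate combinatorial construction occupying most of~\cite{KL}, and the compactness transfer you gesture at is precisely where the work lies. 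You correctly identify this as the obstacle, but a sketch that names the obstacle without resolving it is not a proof. For the purposes of the present paper, the right move is simply to cite~\cite{KL} as the authors do.
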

(As remarked on \cite[p. 1279]{KL}, this version follows from the version stated there. The notion of a ``weak tangent'' is defined in section \ref{subsec:tangents}.)

As a consequence of Corollary \ref{cor:positivemodulus} and Theorem \ref{thm:KL}, we obtain:
\begin{corollary}\label{cor:cdim}
Let $Q\geq 1$ and let $X$ be a complete, Ahlfors $Q$-regular metric space, where $Q=\cdim(X)$. If $X$ admits a bi-Lipschitz embedding into some Euclidean space $\RR^N$, then there is a complete metric space $Z$ and a weak tangent of $X$ that is bi-Lipschitz equivalent to $Z\times \RR$.
\end{corollary}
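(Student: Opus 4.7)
The plan is to combine the Keith--Laakso criterion (Theorem \ref{thm:KL}) with Corollary \ref{cor:positivemodulus}, exploiting the fact that both Ahlfors regularity and bi-Lipschitz embeddability into Euclidean space pass to weak tangents.

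Since $Q = \cdim(X)$, Theorem \ref{thm:KL} produces a weak tangent $W$ of $X$ together with a family $\Gamma \subseteq \Curv(W)$ of non-constant curves satisfying $\Mod_p(\Gamma, \mathcal{H}^Q|_W) > 0$ for some $p \geq 1$. Ahlfors $Q$-regularity passes to pointed Gromov--Hausdorff limits, so $W$ is itself a complete Ahlfors $Q$-regular space; in particular $\mu := \mathcal{H}^Q|_W$ is doubling (and trivially absolutely continuous with respect to itself), which supplies the measure-theoretic hypotheses of Corollary \ref{cor:positivemodulus}.

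Next I would produce a bi-Lipschitz embedding of $W$ into some Euclidean space. Writing $W = \lim (X, x_n, r_n)$ with $r_n \to \infty$, and letting $f \co X \to \RR^N$ be the given bi-Lipschitz embedding of $X$, the composed maps $y \mapsto r_n\bigl(f(y) - f(x_n)\bigr)$ are bi-Lipschitz embeddings of the rescaled spaces $(X, x_n, r_n)$ into $\RR^N$ with uniform constants. A standard Arzel\`a--Ascoli argument (using the doubling of $X$) extracts a bi-Lipschitz limit embedding $W \hookrightarrow \RR^N$. Corollary \ref{cor:positivemodulus} therefore applies to $W$ and produces some $x \in W$ at which every tangent $Y \in \Tan(W, x)$ is bi-Lipschitz equivalent to a product $Z \times \RR$; such a tangent exists by Gromov's compactness theorem applied in the doubling setting.

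It remains to verify that a tangent of the weak tangent $W$ is itself a weak tangent of $X$. This is the only genuinely technical point: if $W = \lim (X, x_n, r_n)$ and $Y = \lim (W, x, s_m)$, then for each $m$ one uses the Gromov--Hausdorff approximations $(X, x_n, r_n) \leftrightarrow W$ to pick $y_m \in X$ and an index $n(m)$ large enough that $(X, y_m, s_m r_{n(m)})$ approximates $(W, x, s_m)$ to within $1/m$ in the pointed Gromov--Hausdorff sense. A diagonal argument then yields $Y = \lim (X, y_m, s_m r_{n(m)})$ as a weak tangent of $X$, completing the proof. The main obstacle throughout is purely bookkeeping -- tracking the passage of Ahlfors constants, bi-Lipschitz constants, and the (weak) tangency relation through iterated rescaling limits -- since the conceptual content is already contained in Theorem \ref{thm:KL} and Corollary \ref{cor:positivemodulus}.
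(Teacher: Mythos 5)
Your proof is correct and follows essentially the same route as the paper's: Keith--Laakso (Theorem \ref{thm:KL}) produces a weak tangent with a positive-modulus curve family, Corollary \ref{cor:positivemodulus} then gives the splitting of a tangent of that weak tangent, and the chain is closed by the fact that a (weak) tangent of a weak tangent is again a weak tangent. The technical points you sketch out -- that Ahlfors regularity and bi-Lipschitz embeddability pass to weak tangents, and the diagonalization giving $\WTan(W)\subseteq\WTan(X)$ -- are exactly what the paper delegates to Lemma \ref{lem:tangentprops}, items (iv), (v), and (vii).
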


Corollary \ref{cor:cdim} shows that we should not expect minimizers for conformal dimension to appear within Euclidean space except under quite special circumstances, i.e., in the presence of some form of splitting. We illustrate a more concrete special case here.

For this we introduce the terminology of linear connectedness: A metric space $X$ is \textit{linearly connected} (with constant $C\geq 1$) if every pair of points $x,y\in X$ can be joined by a compact, connected subset $E\subseteq X$ with $\diam(E) \leq C d(x,y)$. In particular, \textit{quasiconvex} spaces (in which every pair of points can be joined by a curve with length comparable to the distance between the points) are linearly connected.

\begin{corollary}\label{cor:carpet}
Let $X$ be a linearly connected metric space. Suppose that $1<\cdim(X)<2$ and that $\cdim(X)$ is achieved by a space $Y$. (In other words, $Y$ is quasisymmetric to $X$ and Ahlfors $Q$-regular with $Q=\cdim(X)=\cdim(Y)$.)

Then $Y$ admits no bi-Lipschitz embedding into any Euclidean space.
\end{corollary}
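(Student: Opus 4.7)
My strategy is to combine Corollary \ref{cor:positivemodulus}, Theorem \ref{thm:KL}, and a dimension/connectedness computation in a product space to produce a contradiction.

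First, since linear connectedness is a quasisymmetric invariant (applied to the uniformly perfect, Ahlfors $Q$-regular space $Y$), $Y$ is itself linearly connected. Both linear connectedness and Ahlfors $Q$-regularity are stable under pointed Gromov-Hausdorff limits of rescalings, so every weak tangent of $Y$ is Ahlfors $Q$-regular and linearly connected. Since $Y$ realizes its Ahlfors regular conformal dimension, Theorem \ref{thm:KL} furnishes a weak tangent $W$ of $Y$ carrying a family of non-constant curves of positive $p$-modulus (with respect to $\mathcal{H}^Q|_W$) for some $p\geq 1$.

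Suppose towards contradiction that $Y$ admits a bi-Lipschitz embedding into some $\RR^N$. Identifying $Y$ with its image in $\RR^N$, every weak tangent of $Y$ is bi-Lipschitz equivalent to a subset of $\RR^N$; in particular this holds for $W$. I would then apply Corollary \ref{cor:positivemodulus} to $W$ with $\mu=\mu_0=\mathcal{H}^Q|_W$ to obtain a non-trivial Radon measure $\mu'\ll\mathcal{H}^Q|_W$ such that at $\mu'$-almost every point of $W$ every tangent is bi-Lipschitz equivalent to a product $Z\times\RR$. Fix such a point and such a tangent $T$. A standard diagonal argument shows that any tangent of a weak tangent of $Y$ is again a weak tangent of $Y$, so $T$ itself is a weak tangent of $Y$. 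Consequently $T$, and therefore $Z\times\RR$, is Ahlfors $Q$-regular and linearly connected.

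For the final contradiction I would use $1<Q<2$. Since $Z\times\RR$ is Ahlfors $Q$-regular it has Hausdorff dimension exactly $Q$, and the standard product inequality $\dim_H(Z)+1\leq\dim_H(Z\times\RR)$ then gives $\dim_H(Z)\leq Q-1<1$, so that $Z$ is totally disconnected. On the other hand, if $Z$ contained two distinct points $z_1\neq z_2$, linear connectedness of $Z\times\RR$ would produce a compact connected subset joining $(z_1,0)$ and $(z_2,0)$; projecting onto $Z$ would yield a compact connected subset of $Z$ containing both $z_i$, which is impossible. Thus $Z$ is a single point and $Z\times\RR\cong\RR$ is Ahlfors $1$-regular, contradicting $Q>1$.

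The main subtleties I anticipate are the two transfers of properties under pGH limits: first, that linear connectedness and Ahlfors $Q$-regularity pass to weak tangents, and second, that a tangent of a weak tangent of $Y$ is again a weak tangent of $Y$. Both are standard but require careful diagonal constructions; once they are in hand, the closing dimension and connectedness step is elementary.
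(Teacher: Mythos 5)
Your argument is correct and follows essentially the same route as the paper: pass to a weak tangent via Keith--Laakso and Corollary \ref{cor:positivemodulus} (which is exactly the content of Corollary \ref{cor:cdim}), transfer linear connectedness and Ahlfors regularity to that tangent, and derive a contradiction from $1<Q<2$ using the dimension of the product $Z\times\RR$. The paper organizes the endgame contrapositively --- it first uses $Q>1$ to show $Z$ has two points, extracts a continuum $K\subseteq Z$ from linear connectedness, and then invokes $\dim_H(K\times[0,1])\geq 2$ against $Q<2$ --- whereas you first use $Q<2$ to force $\dim_H(Z)<1$, rule out continua, and then contradict $Q>1$; these are the same argument and both rest on the same Frostman/mass-distribution inequality for products.
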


As an example, consider the classical Sierpi\'nski carpets $S_p$ for odd integers $p> 1$ (the most famous example being $S=S_3$). These are plane fractals formed by dividing the unit square into $p^{-1}\times p^{-1}$ squares and removing the middle square, then iterating this construction on the remaining squares. See \cite{BonkMerenkov} for details on this notation.

The spaces $S_p$ are all linearly connected and have $\cdim(S_p) \in (1,2)$ (see \cite[p. 595]{BonkMerenkov}). It is an open question whether the Ahlfors regular conformal dimensions of these spaces are actually achieved. Corollary \ref{cor:carpet} shows that they cannot be achieved by subsets of any Euclidean space.

We note that the linear connectedness condition cannot be removed from Corollary \ref{cor:carpet}: The product $\mathcal{C}\times [0,1]\subseteq \RR^2$ of the standard Cantor set with the unit interval is Ahlfors $Q$-regular (for $Q = 1+\frac{\log(2)}{\log(3)} \in (1,2)$) and known to be minimal for conformal dimension (by a theorem of Tyson \cite{Tyson}). However, it sits isometrically in $\RR^2$.

One may even make this example connected by taking its union with $[0,1]\times\{0\}$, showing that ``linearly connected'' cannot be replaced by ``connected'' in Corollary \ref{cor:carpet}. This example is also easily seen to have a weak tangent that is bi-Lipschitz equivalent to a product $Z\times \RR$.

\subsubsection{The slit carpet}
The slit carpet $\mathbb{M}$ is a metric space homeomorphic to the standard Sierpi\'nski carpet with a number of interesting properties. It was first proposed by Bonk and Kleiner and first studied in print by Merenkov \cite{Me}. Following \cite{Me}, we define the space as follows, mostly using notation from \cite{DEB}. We will be rather brief here, referring the reader to \cite{Me} or \cite{DEB} for more details.

Let $Q_0=[0,1]^2$ denote the unit square in $\RR^2$. For each dyadic subsquare $Q\subseteq Q_0$, let $s_Q$ denote a central vertical ``slit'' in $Q$ of half the side length. More specifically, if 
$$ Q =  [a2^{-k},(a+1)2^{-k}] \times [b2^{-k}, (b+1)2^{-k}],$$
then
$$ s_Q = \left[((2a+1)2^{-k-1},(4b+1)2^{-k-2}) ,((2a+1)2^{-n-1},(4b+3)2^{-k-2})\right] \subseteq Q.$$

Define now $M_0 = Q_0 = [0,1]^2$, and inductively set $$M_{k+1}=M_k \setminus \bigcup_{Q \text{ dyadic, }\side(Q)=2^{-(k+1)}} s_Q.$$ (See Figure \ref{fig:carpets}, borrowed from \cite{DEB}.)

We then define $\mathbb{M}_k$ as the completion of $M_k$ with respect to the shortest path metric $d_k$ on $M_k$, continuing to call this new complete metric on $\M_k$ by $d_k$. In other words, we ``cut along'' each slit in a square of scale $k$ or lower. Note that the $d_k$-diameter of each $\M_k$ is bounded by $3$.

\begin{figure}
\centering
\begin{subfigure}{0.3\textwidth}
  \centering
  \includegraphics[width=\linewidth]{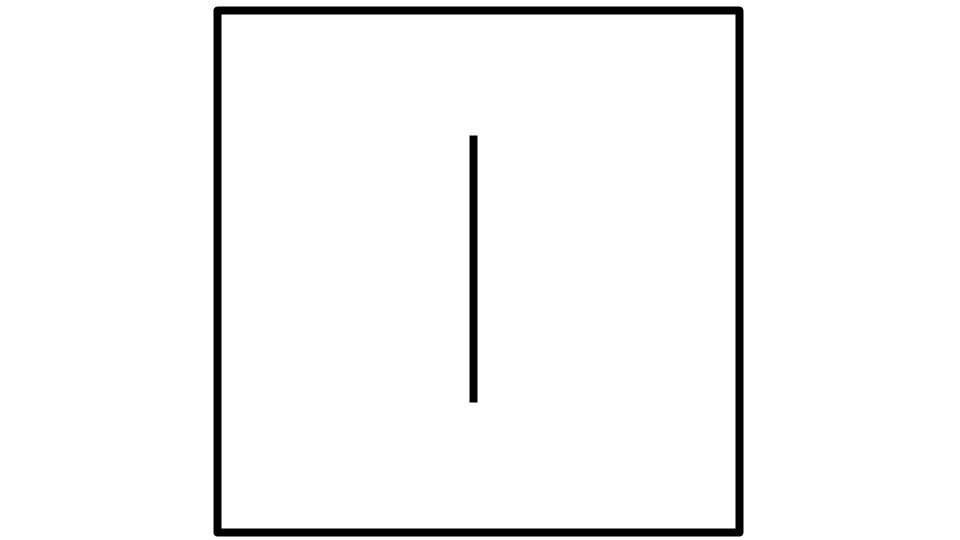}
  \caption{$M_1$}
  \label{fig:carpet1}
\end{subfigure}
\begin{subfigure}{0.3\textwidth}
  \centering
  \includegraphics[width=\linewidth]{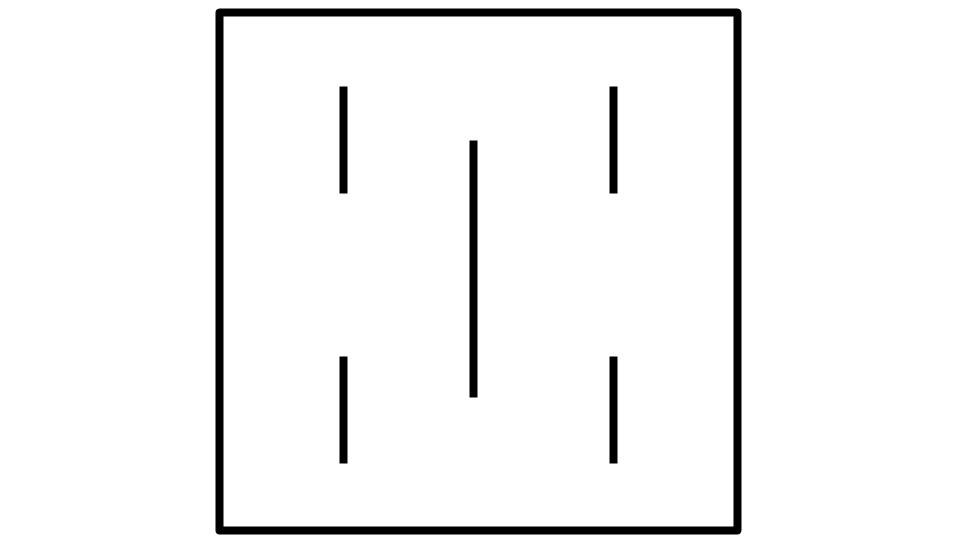}
  \caption{$M_2$}
  \label{fig:carpet2}
\end{subfigure}
\begin{subfigure}{0.3\textwidth}
  \centering
  \includegraphics[width=\linewidth]{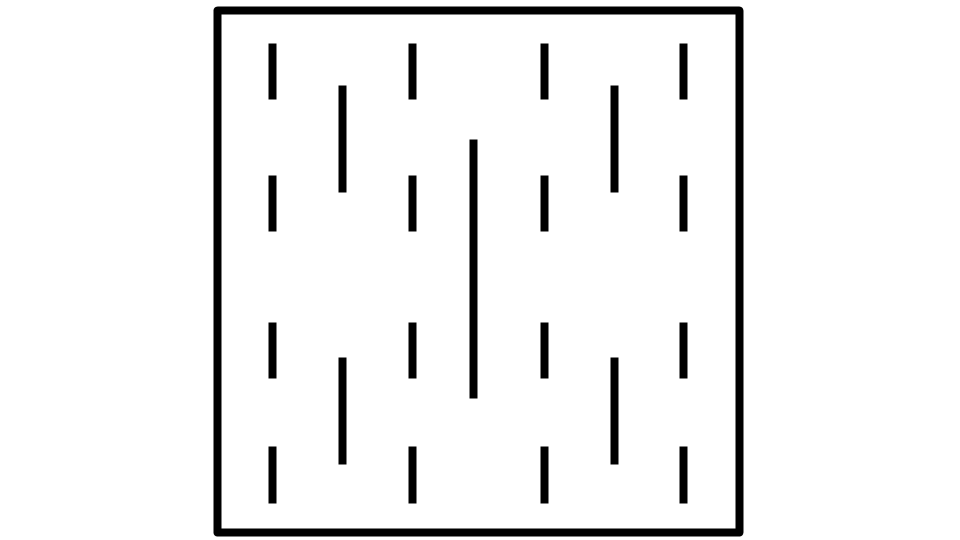}
  \caption{$M_3$}
  \label{fig:carpet3}
\end{subfigure}
\caption{The first three $M_i$.}
\label{fig:carpets}
\end{figure}

Merenkov observes in \cite{Me} that for each $k\leq j$, there is a $1$-Lipschitz mapping $\pi_{j,k}\colon \M_j \rightarrow \M_k$ obtained by identifying opposing points on slits of levels greater than $j$ corresponding to the same point in $\M_k$. These maps compose in the obvious way. We then define the Merenkov slit carpet $\M$ as the inverse limit of the system
$$ \M_0 \xleftarrow{\pi_{1,0}} \M_1 \xleftarrow{\pi_{2,1}} \M_2 \xleftarrow{\pi_{3,2}} \dots,  $$
equipped with the metric 
$$ d(x,y) = \lim_{k\rightarrow \infty} d_k(x_k, y_k)$$
for $x=(x_k)$ and $y=(y_k)$ such that $\pi_{k}(x_k)=x_{k-1}$, and similarly for $y_k$. Note that this is the limit of a bounded, increasing sequence.

In \cite{DEB}, the present authors answered a 1997 question of Heinonen and Semmes \cite[Question 8]{HS} by proving the following.

\begin{corollary}[Originally proven in Theorem 1.2 of \cite{DEB}]\label{cor:slitcarpet}
The slit carpet $\mathbb{M}$ does not admit a bi-Lipschitz embedding into any Euclidean space.
\end{corollary}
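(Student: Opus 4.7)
The plan is to apply Corollary \ref{cor:positivemodulus} contrapositively. Assume $\mathbb{M}$ admits a bi-Lipschitz embedding into some Euclidean space $\bR^N$. The slit carpet is straightforwardly Ahlfors $2$-regular from its self-similar construction (each stage removes a set of $\HH^2$-measure zero while preserving the regularity estimates), so $\mu_0 = \HH^2|_{\mathbb{M}}$ is a doubling Radon measure. To derive a contradiction, the argument reduces to two tasks: (i) exhibit a family $\Gamma$ of non-constant curves in $\mathbb{M}$ with $\Mod_p(\Gamma, \mu_0) > 0$ for some $p \in [1,\infty)$, and (ii) show that for $\mu_0$-almost-every $x \in \mathbb{M}$, no tangent of $\mathbb{M}$ at $x$ is bi-Lipschitz equivalent to a product $Z \times \bR$.

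Task (i) is straightforward because every slit of the construction is a vertical segment situated at a dyadic column $x = (2a+1)2^{-k-1}$. For $x$ in the full-measure complement of this countable set of columns, the vertical segment $\{x\}\times[0,1]$ meets no slit and therefore lifts cleanly through the inverse system to a rectifiable curve $\gamma_x$ in $\mathbb{M}$ of length $1$. A standard Fubini-type computation, using that the projection $\mathbb{M}\to Q_0$ is $1$-Lipschitz and essentially measure-preserving, shows that the family $\Gamma = \{\gamma_x\}$ has positive $2$-modulus with respect to $\mu_0$.

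The main obstacle is task (ii). I would use the self-similar structure of $\mathbb{M}$ to analyze the pointed Gromov--Hausdorff tangents at $\mu_0$-generic basepoints. Such tangents arise as limits of rescalings of $\mathbb{M}$ and inherit vertical slits at every dyadic scale, with a uniform ubiquity property: in any such tangent $Y$, at every point $y \in Y$ and every scale $r > 0$, some vertical slit of length comparable to $r$ lies within distance $\lesssim r$ of $y$. This property is incompatible with a bi-Lipschitz splitting $Y \cong Z \times \bR$: the hypothetical $\bR$-fibers would provide bi-Lipschitz copies of $\bR$ through every point of $Y$, yet at each scale the nearby vertical slit forces a detour for any long arc, and a scale-by-scale accumulation of these detours violates the two-sided length-versus-diameter comparison required by bi-Lipschitz equivalence to $\bR$. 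Making this detour-based obstruction quantitatively rigorous in the Gromov--Hausdorff category, uniformly on a positive-measure set of basepoints, is the most delicate step, and in effect reworks, in the tangent language supplied by Corollary \ref{cor:positivemodulus}, the direct non-embedding analysis carried out in \cite{DEB}.
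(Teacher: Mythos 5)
Your task (i) is sound (and is essentially Merenkov's Lemma 4.2, which the paper simply cites), but your task (ii) contains a genuine gap, and your proposed route for it is different from -- and weaker than -- the paper's.

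You propose to rule out a splitting $Y \cong Z\times\RR$ of a tangent by arguing that nearby slits at every scale force ``detours'' that are incompatible with the $\RR$-fibers being bi-Lipschitz copies of $\RR$. But this obstruction does not actually occur: through $\mu_0$-almost every point of $\M$ (and hence through every point of a generic tangent $Y$) there already runs an isometric copy of $\RR$, namely the lift of a vertical line $\{x\}\times\RR$ at a non-dyadic column. The slits are all vertical, so vertical geodesics avoid them entirely and suffer no detours. Thus the presence of bi-Lipschitz (indeed geodesic) lines through every point is fully consistent with the slit structure; the obstruction to a product decomposition must live somewhere else. Your sketch does not identify where, and you yourself flag the step as ``the most delicate'' and as ``in effect rework[ing]'' the original analysis of \cite{DEB}, which defeats the purpose of the shortcut via Corollary \ref{cor:positivemodulus}.

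The paper resolves task (ii) by a topological-dimension argument, not a quantitative one, and this is the idea your proposal is missing. Self-similarity gives, inside any tangent $Y$ at a generic point $p$, a compact set $K$ bi-Lipschitz to $\M$ containing a metric ball $B(q,c)$. Since $\M$ is homeomorphic to the Sierpi\'nski carpet, $K$ has topological dimension $1$. On the other hand, if $Y$ were bi-Lipschitz to $Z\times\RR$, then $Y$ being quasiconvex (as a tangent of the geodesic space $\M$) forces $Z$ to be quasiconvex and non-degenerate, so $Z$ contains small arcs through each point; the product of such an arc with a small interval gives a homeomorphic image of $[0,1]^2$ inside $B(q,c)\subseteq K$, forcing $K$ to have topological dimension at least $2$. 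This contradiction is soft, uses no length-vs.-diameter estimates, and explains exactly why the product structure fails even though vertical lines exist: the issue is the local 2-dimensionality a product would impose, not the nonexistence of straight lines. To repair your proposal, replace the detour heuristic with this invariance-of-topological-dimension argument.
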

(Actually, the result in \cite{DEB} is more general, but the above corollary already answers \cite[Question 8]{HS}.)

We give an alternative proof of this fact below, based on Corollary \ref{cor:positivemodulus}. This proof is based on the idea that the vertical lines in Figure \ref{fig:carpets} form a positive modulus family of curves, while at the same time the slit carpet admits no infitesimal splitting. As in Corollaries \ref{cor:carpet} above and \ref{cor:Heisenberg} below, the main work is to verify the latter claim.

\subsubsection{The Heisenberg group}

The most well-known example of a doubling metric space that does not bi-Lipschitz embed into any Euclidean space is the first Heisenberg group $\bH$. This was first observed by Semmes \cite{Semmes} as a consequence of Pansu's version of Rademacher's theorem in Carnot groups \cite{Pansu}. There are also a number of other proofs that $\bH$ admits no bi-Lipschitz embedding into any Euclidean space \cite{MO}.

We give a new short proof of this non-embedding. Our proof is independent of Pansu's theorem (or the general metric measure space Rademacher theorem of Cheeger \cite{Ch}), and relies only on Theorem \ref{thm:mainthm} and some basic properties of the Heisenberg group. (Our proof does involve a ``blowup'' argument via Theorem \ref{thm:mainthm}, so it has that in common with the Pansu-Semmes approach.)

\begin{corollary}[Originally due to Semmes and Pansu]\label{cor:Heisenberg}
The Heisenberg group $\bH$ (with its Kor\'anyi or Carnot-Carath\'eodory metric) admits no bi-Lipschitz embedding into any Euclidean space. 
\end{corollary}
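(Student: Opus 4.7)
The plan is to apply Corollary \ref{cor:bilip} to a hypothetical bi-Lipschitz embedding of $\bH$ into $\RR^N$ and derive a contradiction from the horizontal structure of $\bH$. First I would verify the hypotheses. Haar measure on $\bH$ is Ahlfors $4$-regular in the Carnot-Carath\'eodory metric, hence doubling, so I take $\mu = \mu_0 =$ Haar measure. For the Alberti representation, it suffices to exhibit one: foliate $\bH$ by the integral curves of the left-invariant horizontal vector field $X = \partial_x - (y/2)\partial_t$. Each such curve is a horizontal geodesic parametrized by arc length, and Fubini's theorem along the $2$-parameter family of leaves decomposes Haar measure into a $1$-rectifiable family, producing the required Alberti representation.

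Corollary \ref{cor:bilip} then asserts that at $\mu$-almost every $p \in \bH$, every metric tangent of $\bH$ at $p$ is bi-Lipschitz equivalent to a product $W \times \RR$ for some complete space $W$. The second structural input is self-similarity: the Carnot dilations $\delta_r(x,y,t) = (rx, ry, r^2 t)$ are metric similitudes of $(\bH, d_{cc})$, so every metric tangent of $\bH$ at every point is isometric to $\bH$ itself. Hence $\bH$ itself would be bi-Lipschitz equivalent to some $W \times \RR$.

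The heart of the argument is to rule out such a splitting. For a bi-Lipschitz equivalence $f \colon W \times \RR \to \bH$, each fiber $\gamma_w := f(w, \cdot)$ is a Lipschitz curve in the CC metric, hence rectifiable, and by the standard fact that rectifiable curves in a Carnot-Carath\'eodory metric are horizontal, each $\gamma_w$ is horizontal. I would then perform a second blow-up (using self-similarity once more and a Lebesgue density argument for the horizontal direction field $w \mapsto \gamma_w'(0)$) to reduce to the case where all fibers are integral curves of a single left-invariant horizontal vector field, which we may take to be $X$ after an orthogonal rotation of horizontal coordinates. Writing each tangent fiber as a left translate $\gamma_w(s) = g_w \cdot (s, 0, 0)$ with $g_w = (x_w, y_w, z_w)$, a routine Heisenberg-product computation shows that $d_\bH(\gamma_w(s), \gamma_{w'}(s))$ is uniformly bounded in $s \in \RR$ only if $y_w = y_{w'}$. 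The bi-Lipschitz bound on $f$ then forces $w \mapsto y_w$ to be locally constant on $W$, and hence constant on all of $W$ by connectedness. This traps the image $f(W \times \RR)$ inside a single plane $\{y = \text{const}\} \subseteq \bH$, which has Hausdorff dimension $3$ in the CC metric---contradicting the bi-Lipschitz equivalence with the $4$-dimensional $\bH$.

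The main obstacle is the second blow-up, which must produce a tangent foliation by integral curves of a single left-invariant horizontal vector field. This amounts to a Lebesgue-differentiation statement on the horizontal unit-vector field tangent to the fibers, combined with the uniform bi-Lipschitz control on $f$. Once this step is carried out, the Heisenberg algebra and the connectedness of $W$ complete the argument quickly.
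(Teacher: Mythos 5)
Your setup is sound through Corollary \ref{cor:bilip}: Haar measure is doubling, the foliation by $X$-integral curves gives an Alberti representation, and self-similarity upgrades ``some tangent splits'' to ``$\bH$ itself is bi-Lipschitz to $W\times\RR$.'' The problem is the step you flag yourself as the main obstacle, and it is a genuine gap, not merely a detail to fill in.

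You only exhibit one Alberti representation, and so Corollary \ref{cor:bilip} gives a splitting $W\times\RR$ with a single $\RR$-factor. To extract a contradiction from this, you need the fibers $\gamma_w = f(w,\cdot)$ to be cosets of a single one-parameter horizontal subgroup, and you propose to achieve this by a ``second blow-up'' using a Lebesgue density argument on the direction field $w\mapsto\gamma_w'(0)$. But Lebesgue density for the direction field is not enough: the fibers are merely Lipschitz horizontal curves, and horizontality does not make them straight. Even at a density point for the direction, the blow-up of a single fiber is a straight horizontal line only if that fiber is \emph{Pansu-differentiable} at the blow-up parameter --- and to get \emph{all} fibers in the tangent to straighten simultaneously, you need something like uniform (Egorov-type) Pansu differentiability of $s\mapsto f(w,s)$ across $w$. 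That is essentially a one-parameter version of Pansu's theorem for the map $f$, which is precisely the machinery the paper set out to avoid, and your ``routine Heisenberg-product computation'' $d_\bH(\gamma_w(s),\gamma_{w'}(s))\sim\sqrt{|s|\cdot|y_w-y_{w'}|}$ only applies after this linearization has been established. As written, the step is not justified, and filling it in would amount to reproving a differentiability theorem rather than avoiding one.

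The paper's route sidesteps all of this by using \emph{two} independent Alberti representations (from the $X$- and $Y$-horizontal foliations; see Lemma \ref{lem:heisenbergAR}), so Corollary \ref{cor:bilip} yields $\bH\cong Z\times\RR^2$ bi-Lipschitz. Since $Z$ is quasiconvex it contains a Lipschitz arc $\gamma\colon[0,1]\to Z$, and then $(t,p)\mapsto\phi(\gamma(t),p)$ is a Lipschitz homeomorphism from an open subset of $\RR^3$ into $\bH$; by invariance of domain the image is open, but as a Lipschitz image of a subset of $\RR^3$ it has Hausdorff dimension at most $3$, contradicting Ahlfors $4$-regularity. No second blow-up, no Pansu-type input, no analysis of the fiber directions. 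The fix to your argument is therefore not to push harder on the second blow-up but to take the extra Alberti representation you have freely available; once you split off $\RR^2$ rather than $\RR$, the dimension count closes the argument immediately.
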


\subsection{Final introductory remarks and outline}

A few more remarks are in order concerning our main results.

First of all, Theorem \ref{thm:mainthm}, Corollary \ref{cor:bilip}, Corollary \ref{cor:positivemodulus}, and Corollary \ref{cor:cdim} are all completely false if one does not assume that the space $X$ lies inside, or admits a bi-Lipschitz embedding into, some Euclidean space. To be concrete, the Heisenberg group $\bH$ supports a doubling (even Ahlfors $4$-regular) measure supporting two independent Alberti representations, it supports curve families of positive modulus, and it is minimal for conformal dimension. (See \cite[Theorems 9.6, 9.27, 15.10]{He} and subsection \ref{subsec:heisenberg}.) However, no weak tangent of $\bH$ is bi-Lipschitz to some $Z\times \RR$; this is not difficult to prove given \cite[Theorem 7.2]{AK}. (One could actually prove Corollary \ref{cor:Heisenberg} along these lines, but we give a different argument that avoids the tools of \cite{AK,Pansu}.)

We note that Theorem \ref{thm:mainthm} is related to the main result of \cite{AlbMar}, which gives a ``partial differentiable structure'' for measures on Euclidean space that support independent Alberti representations. Here our focus is on the geometric tangent structure of subsets, rather than on differentiability properties of mappings, and our methods are different.

There are certainly further questions one could ask in our setting about the tangents of the measure $\mu$ in Theorem \ref{thm:mainthm}, rather than the tangents of the support $X$.  Related results for abstract metric measure spaces appear in \cite{CKS}. One could also ask about embeddings into infinite-dimensional Banach spaces. In the interest of keeping the present paper reasonably direct, we defer these questions to future work.

Using results of Schioppa \cite[Theorem 3.24 and Corollary 3.93]{Schioppa},  Theorem \ref{thm:mainthm} and Corollary \ref{cor:bilip} can be directly recast in terms of so-called Weaver derivations. In other words, if $X$, $\mu_0$, and $\mu$ are as in Theorem \ref{thm:mainthm}, but we assume that $\mu$ supports $k$ independent Weaver derivations rather than $k$ independent Alberti representations, the conclusion still holds. We refer the reader to \cite{Weaver} or \cite[Section 13]{HeNonsmooth} for more on Weaver derivations, and \cite{Schioppa} for more on the connection between Weaver derivations and Alberti representations.

\subsubsection{Outline of the paper}
In Section \ref{sec:prelim}, we give basic definitions and preliminary results. Within that section, subsection \ref{subsec:tangents} defines the different notions of tangents and our version of Preiss's principle, Proposition \ref{prop:basepoint}, is stated, though its proof is deferred to the appendix (Section \ref{sec:appendix}). Subsection \ref{subsec:alberti} defines Alberti representations and states the result of Bate, Proposition \ref{prop:bate}, that we will need, in addition to some other preliminary facts.

Theorem \ref{thm:mainthm} is then proven in Section \ref{sec:mainproof}. Section \ref{sec:modulus} contains Proposition \ref{prop:modmeasure} and Corollary \ref{cor:modmeasure}, which relate the notions of modulus and Alberti representations. All the corollaries listed in subsection \ref{subsec:corollaries} are  then proven in Section \ref{sec:corollaries}.

\section{Notation and preliminaries}\label{sec:prelim}

\subsection{Metric spaces and measures}

We write $(X,d)$ for a metric space, or just $X$ if the metric is understood. We use standard notation; in particular $B(x,r)$ and $\overline{B}(x,r)$ denote the open and closed balls, respectively, of radius $r$ centered at $x\in X$. If $c>0$ and $X=(X,d)$ is a metric space, then $cX$ denotes the metric space $(X, cd)$. We write $(X,\mu)$ for a metric measure space with a given Radon measure $\mu$ that is finite and positive on all balls, and where the metric $d$ is implied.

As usual, a map $f\colon X \rightarrow Y$ between two metric spaces is \textit{Lipschitz} if there is a constant $L\geq 0$ such that
$$ d(f(x),f(y)) \leq Ld(x,y) \text{ for all } x,y\in X.$$
In this case, we may also call $f$ \textit{$L$-Lipschitz} to emphasize the constant. The infimum of all $L$ such that $f$ is $L$-Lipschitz is denoted $\LIP(f)$. 

A map $f\colon X \rightarrow Y$ between two metric spaces is \textit{bi-Lipschitz} (or \textit{$L$-bi-Lipschitz}) if there is a constant $L\geq 1$ such that
$$ L^{-1}d(x,y) \leq d(f(x),f(y)) \leq Ld(x,y) \text{ for all } x,y\in X.$$
Two metric spaces are \textit{bi-Lipschitz equivalent} if there is a bi-Lipschitz surjection from one onto the other.

A \textit{pointed metric space} is a pair $(X,x)$, where $X$ is a metric space and $x\in X$ is a point (the ``basepoint''). Two pointed metric spaces are \textit{pointedly isometric} if there is an isometry between them that preserves basepoints.

Corollary \ref{cor:bilip} uses the notion of a product $X\times Y$ of two metric spaces $(X,d_X)$ and $(Y,d_Y)$. There are many equivalent ways to metrize this product. For concreteness, we take
$$ d_{X\times Y}((x,y),(x',y')) = (d_X(x,x')^2 + d_Y(y,y')^2)^{1/2}.$$
Any other natural choice would yield a bi-Lipschitz equivalent metric on $X\times Y$.

For a metric space $X$, we let $M(X)$ be the space of finite Borel measures on $X$. A non-trivial measure $\mu\in M(X)$ is \textit{doubling} if there is a constant $C\geq 0$ such that
$$ \mu(B(x,2r)) \leq C\mu(B(x,r)) \text{ for all } x\in X, r>0.$$
A metric space that supports a doubling measure must be a \textit{doubling metric space}: every ball of radius $r$ can be covered by $N$ balls of radius $r/2$, where $N$ is a fixed constant. (See \cite{He} for more on doubling metrics and measures.)

A complete doubling metric space is \textit{proper}: every closed ball is compact.

Lastly, we write $\HH^Q$ for the $Q$-dimensional Hausdorff measure on a metric space $X$ (with $X$ understood from context), and $\dim_H(X)$ for the Hausdorff dimension of $X$. See \cite[Section 8.3]{He} for definitions.

\subsection{Tangents of metric spaces and sets}\label{subsec:tangents}

The next few definitions will use the notion of \textit{pointed Gromov-Hausdorff convergence} of a sequence of pointed metric spaces. See \cite[Section 8]{BBI}. Versions are also given in \cite{DS, Keith, KL, GCD}, among other places.

\begin{definition}
Let $X$ be a metric space, and let $\lambda_i$ ($i\in\mathbb{N}$) be a sequence of positive real numbers. 
\begin{itemize}
\item If $x\in X$, $\lambda_i\rightarrow 0$, and the sequence of pointed metric spaces
$$ ( \lambda_i^{-1} X, x),$$
converges in the pointed Gromov-Hausdorff sense to a complete pointed metric space $(Y,y)$, then $(Y,y)$ is called a \textit{tangent of $X$ at $x$}. The collection of all tangents of $X$ at $x$ is written $\Tan(X,x)$.
\item If $\{x_i\}\subseteq X$, $0<\lambda_i < \diam(X)$ for each $i$, and the sequence of pointed metric spaces
$$ ( \lambda_i^{-1} X, x_i),$$
converges in the pointed Gromov-Hausdorff sense to a complete pointed metric space $(W,w)$, then $(W,w)$ is called a \textit{weak tangent of $X$}. The collection of all weak tangents of $X$ is written $\WTan(X)$.

\end{itemize}
\end{definition}
Technically speaking, elements of $\Tan(X,x)$ are not pointed metric spaces but rather pointed isometry classes, since the pointed Gromov-Hausdorff topology does not distinguish between isometric metric spaces; similarly, elements of $\WTan(X)$ are isometry classes. We tend to elide this distinction for notational convenience. Of course, if $(Y,y)\in\Tan(X,x)$ then  $Y\in\WTan(X)$.

If a space already sits inside an ambient Euclidean space, then one can more naturally take tangents by rescaling inside the Euclidean space and taking a limit in the pointed Hausdorff sense: If $\{A_j\}$ are subsets of $\RR^n$, then we say that $\{A_j\}$ converges to a closed set $A\subseteq \RR^n$ in the \textit{pointed Hausdorff sense} if 
$$ \lim_{j\rightarrow\infty} d_R(A_j, A) = 0 \text{ for all } R>0,$$
where
$$ d_R(A,B) = \max\left\{ \sup\{\dist(a,B): a\in A\cap B(0,R)\} , \sup\{ \dist(b,A): b\in B\cap B(0,R)\}\right\}.$$
Of course, one could take a sum rather than a max above (as done in \cite[Chapter 8]{DS}) and this would only change the definition by at most a factor of $2$.

\begin{definition}
Let $A$ be a subset of $\RR^n$ and $a\in A$. An \textit{intrinsic tangent of $A$ at $a$} is any closed set which is a pointed Hausdorff limit of sets of the form
$$ \lambda_j^{-1}(A - a), $$
where $\{\lambda_j\}$ is a sequence of positive real numbers tending to $0$ as $j\rightarrow\infty$.

The collection of all intrinsic tangents of $A$ at $a$ is written $\Tan_{\RR^n}(A,a)$.
\end{definition}

An important difference between tangents and intrinsic tangents is that $\Tan_{\RR^n}(A,a)$ may contain distinct elements that are isometric.

If one has a Lipschitz function defined on a subset of $\RR^n$, there is also a way to pass to a tangent (or ``blowup'') of the function simultaneously with the set. This is given in \cite[Chapter 8]{DS}.

\begin{definition}
Let $A$ be a subset of $\RR^n$ and $a\in A$. Let $f\colon A\rightarrow\RR^m$ be a Lipschitz function.

Suppose that $\{\lambda_j\}$ is a sequence of positive real numbers tending to $0$ as $j\rightarrow\infty$ and that
$$ \lambda_j^{-1}(A - a) \text{ converge to the closed set } \hat{A} \text{ in the pointed Hausdorff sense}. $$
Moreover, suppose that $\hat{f}:\hat{A}\rightarrow \RR^m$ is such that, whenever $x_j\in A$ and
$$ \lambda_j^{-1}(x_j-a) \rightarrow x\in \hat{A},$$
we then have
$$ \lambda_j^{-1}(f(x_j) - f(a)) \rightarrow \hat{f}(x).$$
We then call the pair $(\hat{A},\hat{f})$ an \textit{intrinsic tangent of $(A,f)$ at $a$} and write
$$ (\hat{A},\hat{f})\in\Tan_{\RR^n}(A,f,a).$$
\end{definition}

The following facts are all standard, well-known consequences of the compactness theorems for pointed Hausdorff and pointed Gromov-Hausdorff convergence. See, e.g., \cite[Lemmas 8.6 and 8.13]{DS}. We include this lemma simply as a summary of the basic facts that we will use.
\begin{lemma}\label{lem:tangentprops}
Let $X$ be a complete doubling metric space and $x\in X$. Let $A$ be a closed subset of $\RR^n$ and $a\in A$. Let $f:A\rightarrow B\subseteq \RR^m$ be Lipschitz. Let $\{\lambda_j\}$ be any sequence of positive real numbers tending to $0$.
\begin{enumerate}[(i)]
\item There is a subsequence $\lambda_{j_k}$ such that $( \lambda_{j_k}^{-1}X,x)$ converges in the pointed Gromov-Hausdorff sense to a doubling pointed metric space in $\Tan(X,x)$. In particular, $\Tan(X,x)\neq\emptyset$.
\item There is a subsequence $\lambda_{j_k}$ such that $\lambda_{j_k}^{-1}(A-a)$ converges in the pointed Hausdorff sense to an element $\hat{A}$ of $\Tan_{\RR^n}(A,a)$. In particular, $\Tan_{\RR^n}(A,a)\neq\emptyset$.
\item In the subsequence from (ii), we may also obtain that
\begin{itemize}
\item the functions $ \lambda_{j_k}^{-1}(f(\cdot) - f(a)) $ converge to a Lipschitz function $\hat{f}:\hat{A}\rightarrow\RR^m$ in the sense above, to yield $(\hat{A},\hat{f})\in\Tan_{\RR^n}(A,a,f)$,
\item the sequence $\lambda_{j_k}^{-1}(B-f(a))$ converges in the pointed Hausdorff sense to an element $\hat{B}$ of $\Tan_{\RR^m}(B,f(a))$, and
\item $\hat{f}(\hat{A})\subseteq\hat{B}$.
\end{itemize}
\item If $f$ is $L$-bi-Lipschitz, then so is $\hat{f}$. 
\item If $X$ and $A$ are Ahlfors $Q$-regular, then so are every element of $\WTan(X)$ and $\Tan_{\RR^n}(A,a)$.
\item If $\mu$ is a doubling measure on $A$ and $a$ is a point of density of a subset $A'\subseteq A$, then $\Tan_{\RR^n}(A',f,a)=\Tan(A,f,a)$.
\item If $Y\in \WTan(X)$, then $\WTan(Y) \subseteq \WTan(X)$.
\end{enumerate}
\end{lemma}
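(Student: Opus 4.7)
The plan is to verify the seven items by direct appeal to the standard compactness theorems for pointed Gromov--Hausdorff and pointed Hausdorff convergence, after noting that doubling, Lipschitz, bi-Lipschitz, and Ahlfors regularity constants are all scale-invariant. Parts (i)--(iii) are existence assertions that follow a common pattern: each rescaling $\lambda_j^{-1}X$ has the same doubling constant as $X$, and each set $\lambda_j^{-1}(A-a)$ is a closed subset of $\RR^n$ containing the origin, so Gromov's compactness theorem (respectively, the pointed Hausdorff version of Blaschke's selection theorem) yields a convergent subsequence. For (iii) I would pass to a subsequence along which $\lambda_j^{-1}(A-a)\to \hat{A}$ and $\lambda_j^{-1}(B-f(a))\to\hat{B}$ already, extend each rescaled map $g_j(u) = \lambda_j^{-1}(f(a+\lambda_j u) - f(a))$ from its natural domain to all of $\RR^n$ via a McShane extension (preserving the Lipschitz constant $L$), and apply Arzel\`a--Ascoli on each closed ball. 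The pointwise limit restricted to $\hat A$ is the desired $\hat f$, and $\hat f(\hat A)\subseteq \hat B$ is immediate from $f(A)\subseteq B$ by passage to the limit along sequences $(x_j,f(x_j))$.

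Item (iv) survives the pointwise limit of the blowup without modification, since both inequalities in the bi-Lipschitz condition are preserved by a pointed Hausdorff limit of graphs. For (v), I would rescale the Hausdorff measures to $\lambda_j^{-Q}\HH^Q$, use $Q$-regularity to show that these form a locally uniformly bounded family, and extract a weak limit; a standard verification identifies this limit with $\HH^Q$ on the limit space, and the scale-invariant two-sided bound $C^{-1}r^Q \leq \HH^Q(\overline{B}(x,r)) \leq Cr^Q$ then passes through. For (vi), the density-point hypothesis combined with the doubling property of $\mu$ implies $\sup\{\dist(y,A')\colon y\in A\cap \overline{B}(a,\lambda_j R)\} = o(\lambda_j)$ for each $R>0$, so the rescalings of $A$ and $A'$ centered at $a$ have identical pointed Hausdorff limits, and since $f$ is Lipschitz the associated blowups agree as well.

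Finally, (vii) is a standard diagonal argument: if $(W,w)$ is a pointed Gromov--Hausdorff limit of $(\sigma_j^{-1}Y,y_j)$ and $(Y,y)$ is a pointed Gromov--Hausdorff limit of $(\rho_k^{-1}X,x_k)$, then for each $j$ I would choose $k=k(j)$ sufficiently large and a basepoint $x_j'\in X$ approximating $y_j$ well enough in the rescaling $\rho_{k(j)}^{-1}X$ that $(\sigma_j^{-1}\rho_{k(j)}^{-1}X, x_j')$ is within pointed Gromov--Hausdorff distance $1/j$ of $(\sigma_j^{-1}Y,y_j)$. This yields a single sequence of rescalings of $X$ converging to $(W,w)$, exhibiting it as a weak tangent of $X$.

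The main technical friction lies in (iii) and (vii), where one must coordinate multiple convergent sequences with varying domains and basepoints. However, none of these steps introduces any idea beyond the framework developed in \cite[Chapter 8]{DS} and \cite[Chapter 8]{BBI}, and the proof can be executed by systematically invoking Lemmas 8.6 and 8.13 of \cite{DS} together with Gromov's pointed compactness theorem.
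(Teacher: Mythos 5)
Your proposal is correct and takes essentially the same approach as the paper, whose proof of this lemma consists of direct citations to \cite[Lemmas 8.6, 8.13, 8.20, 8.28, 9.5, 9.6]{DS}, \cite[Proposition 3.1]{LD}, and a remark in \cite{GCD} for the function-blowup part of (vi). You have simply unpacked the standard compactness, scale-invariance, density-point, and diagonalization arguments underlying those references rather than citing them outright.
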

\begin{proof}
For (i), (ii), and (iii), see  \cite[Lemmas 8.6 and 8.13]{DS}. For (iv), see \cite[Lemma 8.20]{DS}. For (v), see \cite[Lemma 8.28]{DS}. For (vi) concerning the tangent spaces, see \cite[Lemma 9.6]{DS} or \cite[Proposition 3.1]{LD}; the extension to the tangent mappings is simple, as remarked in \cite{GCD}. For (vii), see \cite[Lemma 9.5]{DS}.
\end{proof}

We will need one more fact about tangents, a principle that appears in many different forms and goes back to Preiss \cite{Preiss}. Versions appear in, e.g., \cite{Mat05,AKL09,LD,GCD}. Informally, this is the principle that ``tangents with moved basepoints are still tangents''.

\begin{proposition}\label{prop:basepoint}
Let $A \subseteq \RR^n$ be a closed set supporting a doubling measure $\mu$. Let $f\colon A \rightarrow \RR^m$ be a Lipschitz mapping. Then for $\mu$-a.e. $a\in A$, the following holds:

For all $(\hat{A},\hat{f})\in\Tan_{\RR^n}(A,a,f)$ and all $b\in \hat{A}$, we have
$$ (\hat{A} - b, \hat{f}(\cdot+b)-\hat{f}(b)) \in \Tan_{\RR^{n}}(A,a,f).$$
\end{proposition}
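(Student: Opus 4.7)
The statement is a Preiss-type ``basepoint-shift'' principle: for $\mu$-a.e.\ $a$, the family $\Tan_{\RR^n}(A,a,f)$ is invariant under translation by any point of the tangent set. I would adapt the now-standard template of the Preiss argument \cite{Preiss}, in the set-valued form appearing in \cite{Mat05,AKL09,LD,GCD}, to the joint set--function setting of $(A,f)$. The heart of the proof is a Lebesgue-density argument exploiting the doubling property of $\mu$.

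\textbf{Step 1: Separability reduction.} Let $\mathcal{T}$ be the space of pairs $(K,g)$, where $K\subseteq \RR^n$ is closed with $0\in K$ and $g\colon K\to \RR^m$ is $L$-Lipschitz with $g(0)=0$ (here $L=\Lip(f)$), topologized by combining pointed Hausdorff convergence of $K$ with uniform convergence of $g$ on compacta; this space is separable. Fix countable dense subsets $\{(K_i,g_i)\}\subseteq\mathcal{T}$ and $\{q_m\}\subseteq\RR^n$. A standard approximation argument, together with the fact that $\Tan_{\RR^n}(A,a,f)$ is closed in $\mathcal{T}$ (by the compactness statements in Lemma~\ref{lem:tangentprops}), reduces the proposition to the following: for $\mu$-a.e.\ $a\in A$ and every $(i,m)$ with $q_m\in K_i$, if $(K_i,g_i)\in\Tan_{\RR^n}(A,a,f)$, then $(K_i-q_m,\, g_i(\cdot+q_m)-g_i(q_m))\in \Tan_{\RR^n}(A,a,f)$.

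\textbf{Step 2: Measure-zero bad set via a translation identity.} For a fixed such $(K,g)$ and $q\in K$, write $(K',g'):=(K-q,\,g(\cdot+q)-g(q))$, and let $E$ be the set of $a\in A$ where $(K,g)\in\Tan_{\RR^n}(A,a,f)$ but $(K',g')\notin\Tan_{\RR^n}(A,a,f)$. Writing $E=\bigcup_n E_n$ with
\[
E_n := \big\{\, a\in E : a\notin G_{1/n,\,n,\,\lambda}(K',g')\ \text{ for all } 0<\lambda<1/n\,\big\},
\]
where $G_{\epsilon,R,\lambda}(T,h)$ denotes the ``good set'' of basepoints whose rescaling at scale $\lambda$ is within distance $\epsilon$ of $(T,h)$ on $B(0,R)$, it suffices to prove $\mu(E_n)=0$ for each $n$. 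Suppose for contradiction that $\mu(E_n)>0$ and pick a Lebesgue density point $a\in E_n$, together with scales $\lambda_k\to 0$ realizing $(K,g)$ as a tangent at $a$. Using that $q\in K$, the doubling of $\mu$, and the density of $a$ in $E_n$, we select $a_k\in E_n\subseteq A$ with $\lambda_k^{-1}(a_k-a)\to q$. The translation identity
\[
\lambda_k^{-1}(A-a_k) = \lambda_k^{-1}(A-a)-\lambda_k^{-1}(a_k-a),
\]
together with its function analog, then implies that the rescaling at $a_k$ at scale $\lambda_k$ converges to $(K',g')$, so $a_k\in G_{1/n,\,n,\,\lambda_k}(K',g')$ for $k$ large. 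But $a_k\in E_n$ forbids this once $\lambda_k<1/n$, a contradiction.

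\textbf{Main obstacle.} The main technical point is the simultaneous selection of $a_k\in E_n$ with $\lambda_k^{-1}(a_k-a)\to q$. This is arranged by combining the Lebesgue density of $a$ in $E_n$---which gives $\mu(B(a,(|q|+\eta)\lambda_k)\setminus E_n)=o(\mu(B(a,(|q|+\eta)\lambda_k)))$ as $k\to\infty$ for every $\eta>0$---with a doubling-based lower bound $\mu(B(a+\lambda_k q,\eta\lambda_k))\geq c(\eta)\,\mu(B(a,(|q|+\eta)\lambda_k))$. Together, these force $E_n\cap B(a+\lambda_k q,\eta\lambda_k)\neq\emptyset$ for every fixed $\eta>0$ once $k$ is sufficiently large, and a diagonal choice $\eta=\eta_k\to 0$ (slow enough relative to the density convergence rate) produces the desired sequence $a_k$.
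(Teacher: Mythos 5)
Your proposal follows essentially the same Preiss-type template as the paper's proof: a countable reduction over the space of tangent pairs, a density-point argument, and the translation identity for rescalings, yielding a contradiction with the quantitative definition of the bad set. The one step you re-derive rather than cite is the selection of $a_k\in E_n$ with $\lambda_k^{-1}(a_k-a)\to q$: the paper invokes Lemma~\ref{lem:density} (at an outer-density point of $A'$, $\Tan_{\RR^n}(A',f,a)=\Tan_{\RR^n}(A,f,a)$, so $(K,g)$ is a tangent of the restricted pair and the $a_k\in A'$ come directly from the Hausdorff convergence), whereas you argue this inline via density plus doubling; the two are equivalent. One small imprecision in your argument: the doubling lower bound is centered at $a+\lambda_kq$, which need not lie in $A$, where the doubling hypothesis applies; one should instead center at a nearby point $p_k\in A$ with $|p_k-(a+\lambda_kq)|=o(\lambda_k)$, available because $q\in K$ and $\lambda_k^{-1}(A-a)\to K$. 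You should also make explicit that $E_n$ may fail to be Borel, so ``Lebesgue density point'' must be read as a point of outer density, as the paper does.
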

The proof of Proposition \ref{prop:basepoint} is a minor modification of facts in the literature, and so postponed until the Appendix (Section \ref{sec:appendix}).

\subsection{Curves and fragments}\label{subsec:fragments}

The key objects in this paper are families of curves (or curve fragments) in metric spaces. We introduce some notation to discuss these objects. Our definitions and notation follow those in \cite{Schioppa} for the most part, with some minor changes.

Fix a separable, locally compact metric space $X$. A \textit{fragment} in $X$ is a bi-Lipschitz map $\gamma\colon C \rightarrow X$, where $C\subseteq \RR$ is compact and the one-dimensional Lebesgue measure $\mathcal{L}^1(C)$ is positive. We write $\Frag(X)$ for the collection of fragments in $X$. If $\gamma\in \Frag(X)$, then the domain $C$ of $\gamma$ is denoted $\dom(\gamma)$ and the image in $X$ is denoted $\im(\gamma)$. 

If $f: X \to \bR^m$ is any function, then we define $(f \circ \gamma)'(t) = \lim_{t' \to t, t' \in C} \frac{f(\gamma(t'))-f(\gamma(t))}{t'-t},$
when the limit exists and $t \in C=\dom(\gamma)$ is a density point. If $X \subset \bR^n$, then we simply write $\gamma'(t)$, when $f={\rm id}$ is the identity map. A \textit{density point} of a compact set $C \subset \bR$ is a $t \in C$, where $\lim_{h \to 0} \frac{\mathcal{L}^1(C \cap (t-h,t+h))}{2h} = 1$.

In Section \ref{sec:modulus}, we will also consider $\Curv(X)$, the collection of all  \textit{non-constant, Lipschitz} maps $\gamma\colon I \rightarrow X$,  where $I$ is a compact \textit{interval} in $\bR$ of positive length. Thus, elements of $\Curv(X)$ represent honest curves in $X$. We will also use the notation $\dom(\gamma)$ to denote the domain of an element $\gamma\in \Curv(X)$.

Note that neither $\Curv(X)$ nor $\Frag(X)$ is a subpsace of the other. We now discuss the appropriate topologies on $\Frag(X)$ and $\Curv(X)$, borrowing from \cite[Section 2]{Schioppa}. 

The spaces $\Frag(X)$ and $\Curv(X)$ both admit embeddings into the space $\Haus(\RR\times X)$ of non-empty compact subsets of $\RR\times X$, by
$$ \gamma \mapsto \{(t,\gamma(t)) : t\in \dom(\gamma)\}.$$
The space $\Haus(\RR\times X)$ is given the Hausdorff metric and the induced topology. If $X$ is complete, then so is $\Haus(\RR\times X)$.

Therefore, we topologize $\Frag(X)$ and $\Curv(X)$ as subspaces of $\Haus(\RR\times X)$. We note that these spaces are $\sigma$-compact if $X$ is proper.

\subsection{Line integrals and metric derivatives}
Let $X$ be a metric space and $\gamma\in\Curv(X)$. We denote by $\len(\gamma)$ the length of $\gamma$, as in \cite[Chapter 7]{He}. If $g\co X\rightarrow \RR$ is a Borel function, then $\int_\gamma g\,ds$ is defined as 
$$ \int_0^{\len(\gamma)} g(\tilde{\gamma}(t))\,dt,$$
where $\tilde{\gamma}$ is the arc length parametrization of $\gamma$; see \cite[Ch.7]{He}.

Following \cite[Definition 4.1.2]{AT}, the \textit{metric derivative} of $\gamma$ at a point $t\in \dom(\gamma)$ is
$$ d_\gamma(t) := \lim_{h\rightarrow 0} \frac{d(\gamma(t+h),\gamma(t))}{|h|},$$
whenever the limit exists. By \cite[Theorem 4.1.6]{AT}, $d_\gamma(t)$ does exist for a.e. $t\in\dom(\gamma)$, and
$$ \len(\gamma) = \int_{\dom(\gamma)} d_\gamma(t)\,dt.$$
It follows that the arc length parametrization $\tilde{\gamma}$ satisfies $d_{\tilde{\gamma}}(t)=1$ for a.e. $t\in\dom(\tilde{\gamma})$.

\subsection{Alberti representations}\label{subsec:alberti}

Fix a complete, locally compact, separable metric space $X$. Recall that $M(X)$ denotes the space of Radon measures on $X$. We equip $M(X)$ with the weak$^*$ topology arising from viewing $M(X)$ as the dual space of $C_c(X)$, the space of compactly supported continuous functions on $X$. See \cite[Assumption 2.3]{Schioppa} for details. Inside $M(X)$, we consider the subspace $P(X)$ consisting of probability measures. Note that elements of $P(X)$ are Borel. 

Fix a metric space $X$ and a measure $\mu\in M(X)$. The following definition is due to Bate \cite{Bate}, based on earlier work of Alberti \cite{Alberti}. In \cite{Schioppa}, the definition was clarified and modified slightly, and this is the definition we present below.

\begin{definition}\label{def:alberti} An \textit{Alberti representation} $\cA$ of $\mu$ is a pair $(P,\nu)$ where 
\begin{enumerate}[(i)]
\item $P$ is a Radon probability measure on $\Frag(X)$, 
\item $\nu \co \Frag(X) \to M(X)$ is a Borel map with $\nu_\gamma \ll \cH^1|_{\im(\gamma)}$ for each $\gamma\in \Frag(X)$, 
\item the measure $\mu$ can be represented as
$$\mu(A) = \int_{\Frag(X)} \nu_\gamma(A) dP(\gamma),$$
for each $A\subseteq X$ Borel,
\item and, for each Borel $A\subseteq X$ and compact interval $I\subseteq \bR$, the map $\gamma\mapsto \nu_\gamma(A \cap \gamma(\dom(\gamma) \cap I))$ is Borel. 
\end{enumerate}
\end{definition}

Note that, given the topologies defined above, the statement that the map  $\nu \co \Frag(X) \to M(X)$ is Borel means that
$$ \gamma \mapsto \int_X g(x) d\nu_\gamma(x)$$
is a Borel map from $\Frag(X)$ to $\RR$ for each $g\in C_c(X)$.

A \textit{cone} in $\bR^n$ is a set of the form 
$$ \Cone(w,t) := \{v\in \RR^n: v\neq 0 \text{ and } v\cdot w \geq t|v|\},$$
for some $w\in\mathbb{S}^{n-1}$ and $t\in \RR$.  Note that, for any $w\in\mathbb{S}^{n-1}$ and $t\leq -1$, $\Cone(w,t)=\RR^n\setminus\{0\}$.

\begin{remark}
Our definition of a cone departs slightly from those in \cite{Bate} and \cite{Schioppa}. In particular, our cones may have opening angle larger than $\pi$. It is clear that any cone of the types in \cite{Bate} and \cite{Schioppa} is a subset of a cone like one above.
\end{remark}

\begin{definition}\label{def:direction}
Fix a metric space $X$, a Lipschitz map $\phi:X\rightarrow \RR^n$, and a cone $C\subseteq \RR^n$.

A fragment $\gamma\in\Frag(X)$ is said to be \textit{in the $\phi$-direction of $C$} if $(\phi\circ\gamma)'(t)\in C$ for a.e. $t\in\dom(\gamma)$.

An Alberti representation $\cA = (P,\nu)$ of a measure $\mu\in M(X)$ is said to be \textit{in the $\phi$-direction of $C$} if $P$-a.e. $\gamma\in \Frag(X)$ is in the $\phi$-direction of $C$.
\end{definition}

\begin{definition}\label{def:independent}
Cones $C_1, \dots, C_k$ in $\RR^n$ are called \textit{independent} if each collection
$$ \{v_1, \dots, v_k : v_i\in C_i\} $$ 
is linearly independent.

A collection $\cA_1, \dots, \cA_k$ of Alberti representations of a measure $\mu\in M(X)$ is called \textit{$\phi$-independent}, for a Lipschitz $\phi:X\rightarrow \RR^m$, if there are independent cones $C_1, \dots, C_k$ in $\RR^m$ such that each $\cA_i$ is in the $\phi$-direction of $C_i$. 

We call a collection $\cA_1, \dots, \cA_k$ of Alberti representations \textit{independent} if they are $\phi$-independent for some Lipschitz map $\phi$ as above.
\end{definition}

A few remarks concerning this definition are in order.
\begin{remark}
If $X\subseteq \RR^n$ and $\mu\in M(X)$ supports $k$ $\phi$-independent Alberti representations, for some $\phi\colon X \rightarrow \RR^m$, then the map $\phi$ may be extended to a Lipschitz map $\phi\colon \RR^n\rightarrow \RR^m$ without altering the notion of $\phi$-independence.
\end{remark}

\begin{remark}
Traditionally (i.e., in \cite{Bate}), it is assumed that $m=k$ in Definition \ref{def:independent}, but we see no need to assume this, and in fact it will be occasionally convenient not to. 
\end{remark}

\begin{remark}
In the case $k=1$ of Definition \ref{def:independent}, one may take all of $\RR^m\setminus\{0\}$ as a single independent cone. Thus, a single Alberti representation $(P,\nu)$ is independent if and only if there is a Lipschitz map $\phi:X\rightarrow \RR^m$ such that $(\phi\circ \gamma)'(t)\neq 0$ for $P$-a.e. $\gamma\in\Frag(X)$ and a.e. $t\in\dom(\gamma)$.

In particular, if $X\subseteq \RR^n$, then a single non-trivial Alberti representation is automatically independent. Indeed, take $\phi$ to be the identity map. Since every $\gamma\in \Frag(X)$ is bi-Lipschitz, we have that
$$ (\phi \circ \gamma)'(t) = \gamma'(t) \neq 0$$
for all $\gamma\in\Frag(X)$ and a.e. $t\in \dom(\gamma)$.
\end{remark}

A last key fact for us will be the following result from \cite{Bate}. Essentially, one would like to know that a phenomenon which happens at almost every point along each curve in an Alberti representation actually happens almost everywhere in $X$. This is what the following result provides.  (See also the more general \cite[Proposition 2.9 ]{Bate}.) 

\begin{proposition}[Corollary 2.13 of \cite{Bate}]\label{prop:bate}
Let $X$ be a complete metric space with a Radon measure $\mu$. Let $\phi\colon X \rightarrow \RR^m$ be Lipschitz such that $\mu$ has $k$ $\phi$-independent Alberti representations. Let $f\colon X \rightarrow \RR^n$ be Lipschitz.

Then for $\mu$-a.e. $x\in X$, the following hold:
\begin{enumerate}[(i)]
\item There are $\gamma_1, \dots, \gamma_k\in \Frag(X)$ such that $\gamma_i(0)=x$ and $\gamma^{-1}(x)$ is a density point of $\dom(\gamma)$.
\item The derivatives $(\phi \circ \gamma_i)'(0)$ exist and form a linearly independent set in $\RR^m$.
\item The derivatives $(f\circ \gamma_i)'(0)$ exist.
\end{enumerate}
\end{proposition}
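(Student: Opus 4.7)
The plan is to handle each of the $k$ Alberti representations separately via a single ``fragment-a.e.\ to $\mu$-a.e.'' pushforward principle, then combine the resulting good fragments through cone independence. The key measure-theoretic lemma I would record first is: for any Alberti representation $(P,\nu)$ of $\mu$ and any Borel set $E\subseteq\Frag(X)\times\RR$ such that the slice $E_\gamma:=\{t\in\dom(\gamma):(\gamma,t)\in E\}$ has full $\mathcal{L}^1$-measure in $\dom(\gamma)$ for $P$-a.e.\ $\gamma$, the projection
\[
A_E:=\{\gamma(t):\gamma\in\Frag(X),\ (\gamma,t)\in E\}
\]
has full $\mu$-measure. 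Indeed, bi-Lipschitzness of $\gamma$ sends $E_\gamma$ to a set of full $\cH^1|_{\im(\gamma)}$-measure in $\im(\gamma)$, hence of full $\nu_\gamma$-measure by absolute continuity, so $\nu_\gamma(X\setminus A_E)=0$ for $P$-a.e.\ $\gamma$, whence $\mu(X\setminus A_E)=\int \nu_\gamma(X\setminus A_E)\,dP(\gamma)=0$.

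I would then apply this lemma, for each $i=1,\dots,k$, to $\cA_i=(P_i,\nu^{(i)})$ and the set
\[
E_i:=\{(\gamma,t):t\text{ is a density point of }\dom(\gamma),\ (\phi\circ\gamma)'(t)\in C_i,\text{ and }(f\circ\gamma)'(t)\text{ exists}\},
\]
where $C_1,\dots,C_k$ are the independent cones witnessing $\phi$-independence of $\cA_1,\dots,\cA_k$. Fullness of $E_{i,\gamma}$ in $\dom(\gamma)$ for $P_i$-a.e.\ $\gamma$ combines three standard ingredients on the compact set $\dom(\gamma)\subseteq\RR$: Lebesgue's density theorem; a.e.\ existence of $(\phi\circ\gamma)'$ and $(f\circ\gamma)'$, obtained by extending the Lipschitz compositions $\phi\circ\gamma$ and $f\circ\gamma$ to all of $\RR$ via McShane's theorem and applying the classical Lebesgue differentiation theorem at density points of $\dom(\gamma)$; and the hypothesis that $\cA_i$ lies in the $\phi$-direction of $C_i$ (Definition \ref{def:direction}).

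Intersecting the $k$ full-$\mu$-measure sets $A_{E_1},\dots,A_{E_k}$, at any $x$ in this intersection one can, for each $i$, choose a pair $(\gamma^{(i)},t_i)\in E_i$ with $\gamma^{(i)}(t_i)=x$ and reparametrize via $\gamma_i(s):=\gamma^{(i)}(t_i+s)$ on $\dom(\gamma^{(i)})-t_i$; this is again a bi-Lipschitz map of a compact set of positive $\mathcal{L}^1$-measure, hence a fragment, with $\gamma_i(0)=x$, with $0$ a density point of $\dom(\gamma_i)$, and such that $(\phi\circ\gamma_i)'(0)\in C_i$ and $(f\circ\gamma_i)'(0)$ exists. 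This yields (i) and (iii) directly, and (ii) is forced by Definition \ref{def:independent} of independent cones. The main technical obstacle is the measurability layer behind the key lemma: one must verify that $E_i$ is a Borel subset of $\Frag(X)\times\RR$ (equivalently, that ``$t$ is a density point of $\dom(\gamma)$'' and ``$(\phi\circ\gamma)'(t)$ exists and lies in $C_i$'' are Borel predicates in $(\gamma,t)$ when $\Frag(X)$ carries its Hausdorff topology) and that the selection $x\mapsto(\gamma^{(i)}(x),t_i(x))$ can be arranged measurably; these are routine but delicate, and I would take them for granted, relying on Definition \ref{def:alberti}(iv), the topology from Subsection \ref{subsec:fragments}, and the measurability foundations already developed in \cite{Bate,Alberti,Schioppa}.
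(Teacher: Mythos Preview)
The paper does not give its own proof of this proposition; it simply cites \cite[Corollary 2.13]{Bate} and remarks that the argument via \cite[Lemma 2.8 and Proposition 2.9]{Bate} carries over to the stated generality ($k\leq m$ and $n\geq 1$). Your sketch is essentially a clean unpacking of exactly that route: your ``fragment-a.e.\ to $\mu$-a.e.'' lemma is the content of \cite[Proposition 2.9]{Bate}, and your application to the sets $E_i$ (density point, existence of derivatives, cone membership) is how \cite[Lemma 2.8 and Corollary 2.13]{Bate} proceed. So your approach is correct and is the same as the one the paper defers to.

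Two minor comments. First, the measurable selection $x\mapsto(\gamma^{(i)}(x),t_i(x))$ you flag at the end is not needed: the proposition only asserts \emph{existence} of suitable $\gamma_1,\dots,\gamma_k$ at $\mu$-a.e.\ $x$, not measurable dependence on $x$, so once $\bigcap_i A_{E_i}$ has full $\mu$-measure you are done. Second, the projection $A_E$ is analytic rather than Borel, so strictly speaking you should invoke universal measurability (or argue instead via the complement, noting that $\nu_\gamma(X\setminus A_E)\leq \nu_\gamma(\im(\gamma)\setminus\gamma(E_\gamma))=0$ and use outer regularity); this is exactly the ``routine but delicate'' layer you already acknowledge, and it is handled in \cite{Bate,Schioppa}.
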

We briefly note that Bate assumes that $k=m$ and $n=1$ in the cited result, but the proof using \cite[Lemma 2.8 and Proposition 2.9]{Bate} works in this generality.

\subsection{Connecting to other measures defined on curve families}

In Section \ref{sec:modulus}, we will need to connect Alberti representations to a related type of measure defined on $\Curv(X)$.

\begin{proposition}\label{prop:decomposition}
Let $X$ be a proper metric space, $\phi: X \to \bR^n$ bi-Lipschitz, and $C \subset \bR^n$ a cone. Suppose that $P$ is a Radon measure\footnote{While the notation may suggest so, this measure need not be a probability measure.} on $\Curv(X)$, so that for $\nu$-almost every $\gamma$, $(\phi \circ \gamma)'(t) \in C$ or $d_\gamma(t) = 0$ for almost every $t \in \dom(\gamma)$.

If the Borel measure defined by
$$\mu(A)= \int_{\Curv(X)} \int_\gamma 1_A ~ds ~dP$$
is locally finite (hence Radon), then it admits an Alberti representation in the $\phi$-direction of $C$.
\end{proposition}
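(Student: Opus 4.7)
The plan is to construct the Alberti representation by decomposing each curve $\gamma$ in the support of $P$ into countably many bi-Lipschitz fragments in a Borel-measurable way, and then pushing $P$ forward onto $\Frag(X)$. For each $j, k \in \bN$, define
\[
E_{j,k}(\gamma) := \{t\in\dom(\gamma) : d(\gamma(t),\gamma(s))\geq j^{-1}|s-t|\text{ for all } s\in\dom(\gamma) \text{ with } 0<|s-t|\leq k^{-1}\}.
\]
Expressing the condition via a countable dense subset of $\dom(\gamma)$ makes the assignment $\gamma\mapsto E_{j,k}(\gamma)$ Borel in the Hausdorff topology on $\Curv(X)$. On $E_{j,k}(\gamma)$ restricted to any subinterval of $\dom(\gamma)$ of length at most $k^{-1}$, the curve $\gamma$ is bi-Lipschitz with lower constant $j^{-1}$; subdividing $\dom(\gamma)$ into finitely many such subintervals indexed by $l$ yields bi-Lipschitz fragments $\gamma_{j,k,l}:=\gamma|_{E_{j,k,l}(\gamma)}$ whenever the underlying compact set has positive Lebesgue measure. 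A standard Lebesgue-differentiation argument shows that $\bigcup_{j,k,l}E_{j,k,l}(\gamma)$ covers $\{t\in\dom(\gamma) : d_\gamma(t)>0\}$ up to a Lebesgue null set, i.e., exactly the portion of $\dom(\gamma)$ that carries the line integral $\int_\gamma f\,ds$.

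Let $P_{j,k,l}$ denote the pushforward of $P$ along the Borel map $\gamma\mapsto\gamma_{j,k,l}$. Choosing positive weights $c_{j,k,l}$ so that $\sum c_{j,k,l}\|P_{j,k,l}\|$ is finite, the measure $\tilde P := \sum_{j,k,l} c_{j,k,l} P_{j,k,l}$ is a finite Radon measure on $\Frag(X)$, which I renormalize to a probability measure. The kernel $\nu_{\gamma'}$ is defined as a rescaled version of the measure $\eta_{\gamma'}(A):=\int_{\dom(\gamma')} 1_A(\gamma'(t))\,d_{\gamma'}(t)\,dt$ (that is, the natural $\cH^1$-continuous measure on $\im(\gamma')$ obtained as $(\gamma')_\ast(d_{\gamma'}\,dt)$), with the scaling arranged so that $\mu(A)=\int \nu_{\gamma'}(A)\,d\tilde P(\gamma')$ via Fubini and the covering property above. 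The absolute continuity $\nu_{\gamma'}\ll \cH^1|_{\im(\gamma')}$ is automatic since $\gamma'$ is bi-Lipschitz. For the direction condition, at a density point $t\in E_{j,k,l}(\gamma)$ the derivative $(\phi\circ\gamma_{j,k,l})'(t)$ coincides with $(\phi\circ\gamma)'(t)$ (the restricted limit exists whenever the full one does), and since line integrals do not charge $\{d_\gamma=0\}$, the hypothesis transfers so that $\tilde P$-a.e. fragment is in the $\phi$-direction of $C$.

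The main obstacle is verifying Borel measurability throughout: that $\gamma\mapsto E_{j,k,l}(\gamma)\subseteq\bR$ and $\gamma\mapsto \gamma_{j,k,l}\in\Frag(X)$ are Borel in the Hausdorff topologies on both sides, and that the resulting kernel $\gamma'\mapsto\nu_{\gamma'}$ satisfies the Borel condition (iv) of Definition \ref{def:alberti}. There is also a small subtlety in defining $\nu_{\gamma'}$ unambiguously when a single fragment $\gamma'$ lies in the image of several of the strata $P_{j,k,l}$; this is standardly handled by first working on the disjoint-label space $\Frag(X)\times\bN^3$ and then projecting. These verifications are technical but routine; once in place, the rest of the argument is a direct Fubini-type reassembly.
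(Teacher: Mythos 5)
Your proof takes a genuinely different route from the paper's. The paper's argument is short because it outsources the construction to Bate's Corollary 5.8, which produces a decomposition $X=A\cup N$ with $\mu|_A$ already carrying an Alberti representation in the $\phi$-direction of $C$ and with $N$ invisible to fragments in that direction (i.e.\ $\HH^1(\im(\gamma)\cap N)=0$ for every such fragment $\gamma$); the paper then reduces to showing $\mu(N)=0$, which follows by reparametrizing a $P$-typical curve by arc length, applying Kirchheim's fragmentation lemma \cite[Lemma 4]{Kirchheim} to split it into bi-Lipschitz pieces in $\Frag(X)$, observing each piece misses $N$ up to $\HH^1$-null sets, and integrating. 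Your approach rebuilds the Alberti representation from scratch via your own Borel stratification $\gamma\mapsto(\gamma_{j,k,l})$ and a Fubini reassembly. This buys self-containedness (no appeal to Bate's decomposition theorem) at the cost of re-deriving a good chunk of what that theorem encapsulates, and the pieces you flag as technical are not entirely routine one-liners: the Borel measurability of $\gamma\mapsto E_{j,k}(\gamma)$ (the set is closed for fixed $\gamma$ but not obviously a continuous function of $\gamma$ in the graph-Hausdorff topology, so one should argue via a suitable lower semicontinuous defining infimum over the compact window $\dom(\gamma)\cap[t-k^{-1},t+k^{-1}]$); condition (iv) of Definition \ref{def:alberti} for the resulting kernel; and, most importantly, the disjointification needed so the reassembly reproduces $\mu$ rather than overcounting, together with the Radon--Nikodym patching needed to define $\nu_{\gamma'}$ consistently when a single fragment arises from several strata $(j,k,l)$ applied to several curves. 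You correctly identify the disjoint-label device on $\Frag(X)\times\bN^3$, but the proof as written leaves exactly these steps — the heart of the matter — unproved. The outline is sound and can be carried through, but the paper's route is noticeably more economical, essentially because Bate's Corollary 5.8 has already done this bookkeeping once and for all.
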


\begin{remark} The literature is rife with different versions of Alberti representations, see  \cite{AlbMar,Bate,Schioppa,CKS} for some of them. Modifications of this argument can be used to show that, roughly speaking, if one has a representation in one of these senses, then one has also a representation in any other sense.

We briefly remark that the cones considered in \cite{Bate} are slightly different from ours, but the proof applies for both notions of cone.
\end{remark}

\begin{proof}

By \cite[Corollary 5.8]{Bate} (see the ``in particular...'' statement), we can decompose $X=A \cup N$, where $\mu|_{A}$ admits an Alberti representation in the $\phi$-direction of $C$, and 
$$ \HH^1(\im(\gamma) \cap N)=0$$
for every $\gamma\in\Frag(X)$ in the $\phi$-direction of $C$.

If we can show that $\mu(N)=0$, then $\mu$ restricted to the full-measure set $A$ supports an Alberti representation in the $\phi$-direction of $C$, and this completes the proof. To establish this, we will show that for $P$-almost every curve $\gamma$ we have
$$\int_\gamma 1_N ~ds =0.$$

First, for $P$-almost every curve $\gamma: I \to X$, and almost every $t \in I$ we have $(\phi \circ \gamma)'(t) \in C$ or $d_\gamma(t) =0$. Let $\gamma$ be any curve with such properties, and let $\tilde{\gamma}:\tilde{I} \to X$ be its arc length reparametrization. Then $(\phi \circ \tilde{\gamma})'(t) \in C$ for almost every $t \in I$. 

By \cite[Lemma 4]{Kirchheim}, we can find compact sets $K_j$ such that $\tilde{I} = \bigcup_j K_j \cup S$, $|S|=0$, and
$$\tilde{\gamma}_j := \tilde{\gamma}|_{K_j} \text{ is bi-Lipschitz.}$$ 
It follows that $\tilde{\gamma}_j\in\Frag(X)$ and in the $\phi$-direction of $C$, and hence $\HH^1(\im(\tilde{\gamma}_j) \cap N)=0$ for each $j$.

Thus,
\[
\int_\gamma 1_N ~ds = \int_{\tilde{I}} 1_N(\tilde{\gamma}(t))~dt = \sum_{j} \int_{K_{j}}  1_N(\tilde{\gamma}(t))~dt = 0,
\]
which completes the proof.

\end{proof}

\section{Proof of Theorem \ref{thm:mainthm}}\label{sec:mainproof}

In this section, we prove Theorem \ref{thm:mainthm}. The proof requires a few lemmas.

\begin{lemma}\label{lem:ARblowup}
Let a closed set $X\subseteq \RR^n$ support a doubling measure $\mu_0$. Let $\mu\ll\mu_0$ support $k$ $\psi$-independent Alberti representations, for some Lipschitz $\psi\colon \RR^n \rightarrow \RR^m$.

Then for $\mu$-a.e. $x\in X$, there are linearly independent vectors $v_1, \dots, v_k$ such that the following holds: For every $Y\in\Tan_{\RR^n}(X,x)$, every $y\in Y$, and every $i\in\{1, \dots, k\}$, there is a line through $y$ in direction $v_i$ that is contained in $Y$.
\end{lemma}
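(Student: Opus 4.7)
My plan is: first apply Proposition~\ref{prop:bate} to produce fragments with controlled derivatives; propagate lines into every tangent via a density-point rescaling argument; move basepoints using Proposition~\ref{prop:basepoint}; and finally verify linear independence of the fragment derivatives by a second-stage blowup together with the classical Rademacher theorem on a finite-dimensional subspace.

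I would begin by applying Proposition~\ref{prop:bate} with $\phi = \psi$ and $f$ a Lipschitz extension to $\RR^n$ of the identity on $X$. For $\mu$-a.e.\ $x \in X$ this produces fragments $\gamma_1, \ldots, \gamma_k \in \Frag(X)$ with $\gamma_i(0) = x$, with $0$ a density point of $\dom(\gamma_i)$, and with existing derivatives $v_i := \gamma_i'(0) \in \RR^n$ and $w_i := (\psi \circ \gamma_i)'(0) \in \RR^m$, the latter linearly independent in $\RR^m$. Shrinking the good set, I may also assume (via Proposition~\ref{prop:basepoint} applied to $X$ and $\psi$) that the moved-basepoint property of Preiss holds at $x$.

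The second step is a rescaling calculation. For any $\lambda_j \downarrow 0$ with $\lambda_j^{-1}(X-x) \to Y$ in the pointed Hausdorff sense, I would show $\RR v_i \subseteq Y$: for any fixed $t \in \RR$, the density-point property of $\dom(\gamma_i)$ at $0$ produces $t_j \in \dom(\gamma_i)$ with $|t_j - \lambda_j t| = o(\lambda_j)$, so $\lambda_j^{-1}(\gamma_i(t_j) - x) \to t v_i$, and this limit sits in $Y$ because $\gamma_i(t_j) \in X$. An entirely parallel computation, using in addition the Lipschitz bound on $\psi$, gives $\hat{\psi}(tv_i) = tw_i$ for every $(\hat{X},\hat{\psi}) \in \Tan_{\RR^n}(X,x,\psi)$. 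Combining with Proposition~\ref{prop:basepoint} (any translate $Y - y$ with $y \in Y$ is again a tangent) promotes ``line through $0$'' to ``line through every $y \in Y$'' --- once I know the $v_i$ are linearly independent.

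The main obstacle is precisely this linear independence, since Proposition~\ref{prop:bate} delivers only the linear independence of the $w_i$. My approach is a second blowup. Iterating the preceding step through Proposition~\ref{prop:basepoint}, any tangent $\hat{X}$ contains the whole subspace $V := \mathrm{span}\{v_1,\ldots,v_k\}$, and $\hat{\psi}(y+tv_i) - \hat{\psi}(y) = tw_i$ for every $y \in \hat{X}$ and every $i$. Now $\hat{\psi}|_V$ is Lipschitz on the finite-dimensional space $V \cong \RR^{\dim V}$, so Rademacher's theorem furnishes a Lebesgue-a.e.\ $y \in V$ at which $\hat{\psi}|_V$ is classically differentiable. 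For such $y$, Proposition~\ref{prop:basepoint} gives that $(\hat{X}-y,\,\hat{\psi}(\cdot+y)-\hat{\psi}(y))$ is still a tangent of $(X,\psi)$ at $x$; a further blowup at $0$ along a subsequence yields one more tangent $(\rho_X,\rho)$ whose restriction $\rho|_V$ coincides with the linear map $L := D(\hat{\psi}|_V)(y)$. Applying the rescaling conclusion to $(\rho_X,\rho)$ forces $L(v_i) = \rho(v_i) = w_i$ for each $i$. Linearity of $L$ then rules out any nontrivial relation $\sum c_i v_i = 0$, since such a relation would yield $\sum c_i w_i = L(\sum c_i v_i) = 0$, contradicting linear independence of the $w_i$. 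This establishes linear independence of the $v_i$ and completes the proof.
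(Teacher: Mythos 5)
Your proposal tracks the paper's argument for the first three steps: applying Proposition~\ref{prop:bate} to obtain fragments with $\gamma_i'(0)=v_i$ and linearly independent $(\psi\circ\gamma_i)'(0)=w_i$, the density-point rescaling to produce lines through the origin in every tangent, and Proposition~\ref{prop:basepoint} to translate those lines to every $y\in Y$. You also correctly identify the crux as the linear independence of the $v_i$, since Proposition~\ref{prop:bate} only delivers independence of the $w_i$. Where you diverge is in how that crux is resolved. You invoke classical Rademacher on $\hat\psi|_V$ together with a second blowup; the paper instead uses a short combinatorial ``walk'': supposing $\sum a_i v_i = 0$ nontrivially, set $y_0=0$ and $y_i = y_{i-1}+a_i v_i$. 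Each $y_i\in Y$ by the translated-line property, and $\hat\psi(y_i)=\hat\psi(y_{i-1})+a_i w_i$, so $0=\hat\psi(y_k)=\sum a_i w_i$, contradicting independence of the $w_i$. This needs no differentiation theorem at all and is more self-contained.

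Your Rademacher route does work in spirit, but two points deserve flagging. First, the second blowup is unnecessary overhead: once $\hat\psi(y+tv_i)-\hat\psi(y)=tw_i$ holds for all $y\in V$, any Rademacher point of $\hat\psi|_V$ already yields a linear (Fr\'echet) derivative $L$ with $L(v_i)=w_i$, because the Fr\'echet derivative agrees with all directional derivatives; linearity of $L$ then directly forbids a relation $\sum c_i v_i=0$. Second, as stated your blowup step --- ``a further blowup at $0$ along a subsequence yields one more tangent'' --- tacitly invokes a ``tangent of a tangent is a tangent'' principle, which is strictly stronger than Proposition~\ref{prop:basepoint} as formulated (that proposition only moves the basepoint; it does not compose rescalings). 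This can be patched by an additional diagonal argument in the style of the appendix, but as written it is a small gap; dropping the second blowup, as above, removes both issues at once.
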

\begin{proof}
We apply Proposition \ref{prop:bate}, in the case $\phi=\psi$ and $f$ is the inclusion $X\rightarrow \RR^n$.

This tells us that, at $\mu$-a.e. $x\in X$, there are $\gamma_1, \dots, \gamma_k\in \Frag(X)$ such that the following hold:
\begin{enumerate}[(i)]
\item For each $1\leq i \leq k$, we have $\gamma_i(0)=x$ with $0$ a density point of $\dom(\gamma)$. 
\item The vectors $(\psi \circ \gamma_1)'(0), \dots, (\psi \circ \gamma_k)'(0)$ are linearly independent in $\RR^m$.
\item For each $1\leq i \leq k$, $\gamma_i'(0)$ exists.
\end{enumerate}

Fix an $x\in X$ where the above hold and where the conclusion of Proposition \ref{prop:basepoint} holds. Let
$$ w_i =  (\psi \circ \gamma_i)'(0)\in \RR^k.$$

Let $v_i=\gamma_i'(0)\in \RR^n$. Note that $v_i\neq 0$ as $\gamma_i$ is bi-Lipschitz, and $w_i\neq 0$ by (ii).

Consider an arbitrary tangent
$$ (Y,\hat{\psi})\in\Tan_{\RR^n}(X,\psi,x),$$
subject to the sequence of scales $\lambda_k \rightarrow 0$.

By Lemma \ref{lem:tangentprops} (items (ii), (iii), and (vi)), we may pass to a subsequence of $\{\lambda_j\}$ subject to which the following tangents also exist (for each $1\leq i \leq k$):

$$ (\RR, L_i) \in \Tan_{\RR}(\dom(\gamma_i),\gamma_i,0) \text{ with } L_i(\RR)\subseteq Y, \text{ and } $$
$$ (\RR,\hat{\psi}\circ L_i) \in \Tan_{\RR}(\dom(\gamma_i),\psi\circ\gamma_i,0).$$
Moreover, since $\gamma_i$ and $\psi\circ \gamma_i$ are differentiable at $0$, their tangent maps $L_i$ and $\hat{\psi}\circ L_i$ are linear. In particular, recalling $\gamma_i'(0)=v_i$ and $(\psi\circ \gamma_i)'(0)=w_i$, we have the following properties of $L_i$:
\begin{equation}\label{eq:Li}
 L_i(t) = tv_i\in Y \text{ and } \psi(L_i(t)) = tw_i \text{ for all } t\in\RR.
\end{equation}
By definition, we also have $0\in Y$ and $\hat{\psi}(0)=0$.

To summarize, the above argument shows that for \textit{every} element $(Y,\hat{\psi})\in\Tan(X,x,\psi)$, there is a line $L_i$ through $0$ with the properties in \eqref{eq:Li}.

Consider again an arbitrary  $(Y,\hat{\psi})\in\Tan(X,x,\psi)$. Proposition \ref{prop:basepoint} therefore says that for every $y\in Y$, the pair $(Y-y, \hat{\psi}(\cdot+y)-\hat{\psi}(y))$ is also an element of $\Tan_{\RR^n}(X,\psi,x)$. 

This implies that for every $y\in Y$ and $i\in\{1, \dots, k\}$, there is a function 
$$L_i^y\colon \RR\rightarrow Y$$
such that
$$ L^y_i(t) = y+tv_i \text{ and } \psi(L_i(t)) = \hat{\psi}(y)+ tw_i \text{ for all } t\in\RR.$$
In other words, $L^y_i$ is the parametrization of a line through $y$ in direction $v_i$ (contained in $Y$), whose composition with $\hat{\psi}$ parametrizes a line through $\hat{\psi}(y)$ in direction $w_i$.

It remains to show that the vectors $v_i$ are linearly independent. Suppose to the contrary that there was a non-trivial linear combination 
$$ \sum_{i=1}^k a_i v_i = 0.$$
Let $y_0=0\in Y$ and $p_0=0$. For $i=1, \dots, k+1$, inductively set
$$ y_i = L^{y_{i-1}}_i(a_i) = y_{i-1} + a_{i}v_{i}$$
and
$$ p_i = \hat{\psi}(y_i).$$
Note that $p_i = p_{i-1} + a_{i}w_{i}$ for each $i=1,\dots,k$ by the properties of $L^y_i$ above.

Then $y_{k}=0$. This implies that $p_{k} = \hat{\psi}(y_k)=0$. On the other hand
$$ p_{k} = \sum_{i=1}^k a_i w_i.$$
This contradicts the linear independence of the vectors $w_i$.

\end{proof}

\begin{lemma}\label{lem:factor}
Let $Y\subseteq\RR^n$ be a closed set. Let $v_1, \dots, v_k$ be linearly independent in $\RR^n$. Suppose that, for each $y\in Y$, there are $k$ lines
$$ L_i = \{y + tv_i : t\in \RR\}$$
that pass through $y$ and are contained in $Y$.

Then $Y = Z \times V$, where $V = \text{span}(\{v_1, \dots, v_k\})$ and $Z\subseteq V^{\bot}$ is a closed set.
\end{lemma}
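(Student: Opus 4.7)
The plan is to show that $Y$ is invariant under translations by any vector in $V$, and then that $Y$ decomposes into its intersection with $V^{\bot}$ times $V$.

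First, I would establish the translation-invariance claim: for every $y\in Y$ and every $v\in V$, we have $y+v\in Y$. The hypothesis gives this when $v$ is a scalar multiple of a single $v_i$. For a general $v = \sum_{i=1}^k t_i v_i$, I would iterate: starting from $y\in Y$, the line hypothesis at $y$ in direction $v_1$ yields $y + t_1 v_1\in Y$; then applying the line hypothesis at the new point $y+t_1v_1$ in direction $v_2$ yields $y + t_1 v_1 + t_2 v_2 \in Y$; and so on, producing $y+v\in Y$ after $k$ steps. This crucially uses that the hypothesis holds at \emph{every} point of $Y$, not just a fixed one.

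Next, I would define $Z := Y\cap V^{\bot}$. Since $V^{\bot}$ and $Y$ are both closed in $\RR^n$, so is $Z$. Identifying $\RR^n$ with $V^{\bot}\times V$ via the orthogonal decomposition $y = \pi_{V^{\bot}}(y) + \pi_V(y)$, I want to show $Y = Z\times V$.

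For the inclusion $Y\subseteq Z\times V$: given $y\in Y$, the translation-invariance just proved, applied with the vector $-\pi_V(y)\in V$, shows that $\pi_{V^{\bot}}(y) = y - \pi_V(y)\in Y$. Since $\pi_{V^{\bot}}(y)\in V^{\bot}$ automatically, we get $\pi_{V^{\bot}}(y)\in Z$, and thus $y = \pi_{V^{\bot}}(y) + \pi_V(y)\in Z\times V$. For the reverse inclusion $Z\times V\subseteq Y$: given $z\in Z\subseteq Y$ and $v\in V$, translation-invariance gives $z+v\in Y$. This completes the proof. I do not anticipate a serious obstacle; the only subtle point is the iteration in the first step, which relies on the hypothesis holding uniformly over all of $Y$.
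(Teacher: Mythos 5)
Your proof is correct and follows essentially the same approach as the paper: both hinge on iterating the line hypothesis over the directions $v_1,\dots,v_k$ to translate within $V$. The one small difference is your choice $Z := Y\cap V^{\bot}$ rather than the paper's $Z := \mathrm{proj}_{V^{\bot}}(Y)$; your choice makes closedness of $Z$ immediate (the paper gives a short separate argument), and by isolating the translation-invariance step up front you make the two inclusions $Y\subseteq Z\times V$ and $Z\times V\subseteq Y$ entirely mechanical, which is a clean streamlining of the same idea.
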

\begin{proof}
Let $V = \text{span}(\{v_1, \dots, v_k\})$, and let 
$$ Z = \text{proj}_{V^\bot}(Y),$$
the projection of $Y$ to the orthogonal complement of $V$.

We now claim that $Y=Z\times V$. Certainly $Y\subseteq Z\times V$, by definition of orthogonal projection. For the other direction, suppose $p\in Z\times V$. Then 
$$p = \text{proj}_{V^\bot}(y) + a_1 v_1 + \dots + a_k v_k,$$
where $y\in Y$ and $a_i\in\RR$. Let $z=\text{proj}_{V^\bot}(y)\in Z$. 

The point $y_0=y$ is in $Y$. For $i=1, \dots, k$, we inductively set $y_i = y_{i-1} + a_i v_i$. By assumption, each point $y_i$ is in $Y$. Note that the last point $y_k$ is equal to $p$. Hence $p\in Y$, which proves that $Z\times V \subseteq Y$.

Lastly, we argue that $Z$ is closed. Indeed, if $z_n$ is a sequence in $Z$ converging to $z\in\RR^n$, then the points
$$(z_n,0)\in Y\subseteq V^{\bot}\times V = \RR^n$$
converge to $(z,0)\in Y$, since $Y$ is closed. It follows that $z\in Z$. 
\end{proof}

\begin{proof}[Proof of Theorem \ref{thm:mainthm}]

Let $X\subseteq \RR^n$ be a closed set admitting a doubling Radon measure $\mu_0$. Let $\mu$ be a measure absolutely continuous to $\mu_0$ that admits $k$ $\phi$-independent Alberti representations.

Let $x$ be a point at which the conclusion of Lemma \ref{lem:ARblowup} holds (a set of points that has full $\mu$-measure). Let $v_1, \dots, v_k$ be the associated linearly independent vectors in $\RR^n$.

Let $Y\in \Tan_{\RR^n}(X,x)$. Then each point $y\in Y$ admits $k$ lines $L_1, \dots, L_k$ through $y$, in directions $v_i$, that are contained in $Y$.

By Lemma \ref{lem:factor}, this implies that $Y= Z\times V$ for a $k$-dimensional subspace $V = \text{span}(\{v_1, \dots, v_k\})$ and some closed set $Z\subseteq V^{\bot}\subseteq \RR^n$. This completes the proof.
\end{proof}

\section{Modulus and Alberti representations}\label{sec:modulus}

In this section, we relate Alberti representations to the more classical notion of the modulus of a family of curves. The main result in this section is Proposition \ref{prop:modmeasure}, which may be of independent interest. (See Remark \ref{rmk:modmeasure} for more on the provenance of this result.)

We first recall the definition of the modulus of a family of curves. It is worth noting, that for us $\Curv(X)$ consists only of curves with Lipschitz parametrizations, while traditionally Modulus is defined for an \textit{a priori} larger class of collections of $\gamma\colon I \to X$, which are merely continuous. However generality is not lost, as the modulus of non-rectifiable curves vanishes by convention, and rectifiable curves can be reparametrized as Lipschitz curves without affecting the modulus. 

\begin{definition}\label{def:modulus}
Let $X$ be a metric space with a Radon measure $\mu$, let $\Gamma\subseteq \Curv(X)$, and let $p\geq 1$.

A Borel measurable function $\rho \co X \to [0,\infty]$ is called \textit{admissible} for $\Gamma$ if  $\int_\gamma \rho ~ds \geq 1$ for each $\gamma \in \Gamma$. We set $\cA(\Gamma)$ to be the collection of all admissible functions for $\Gamma$.

The \textit{$p$-modulus} of $\Gamma$, with respect to the measure $\mu$, is denoted
\begin{equation}\label{eq:moddef}
\Mod_p(\Gamma,\mu) = \inf_{\rho \in \cA(\Gamma)}\int_X \rho^p ~d\mu.
\end{equation}
\end{definition}

For our duality argument, we will need to work with continuous functions $\rho$. Thus, we define $\Mod_p^c(\Gamma,\mu)$ by replacing the infimum in \eqref{eq:moddef} with the infimum over all admissible $\rho\colon X \to [0,\infty)$ that are in addition continuous with compact support. In general, $\Mod_p^c(\Gamma,\mu)$ may be larger that $\Mod_p(\Gamma,\mu)$, and our first goal is to identify an assumption under which they are equal. 
 
We will need the following basic continuity fact both for the equality of $\Mod_p$ and $\Mod_p^c$ and for the duality argument below. It is a version of \cite[Proposition 4]{Keith} in our topology.

\begin{lemma}\label{lem:lowersemicont} Let $\rho_n: X \to \bR$ be an increasing sequence of lower semi-continuous functions convering to $\rho: X \to \bR$. If $\gamma_n:[a,b] \to X$ converge uniformly to $\gamma:[a,b] \to X$, or if $\gamma_n \in \Curv(X)$ converge to $\gamma \in \Curv(X)$, then
$$\liminf_{n \to \infty} \int_{\gamma_n} \rho_n ~ds \geq \int_\gamma \rho ~ds.$$ Further, the map $\gamma \mapsto \int_\gamma \rho ~ds$ is lower semi-continuous on $\Curv(X)$. 
\end{lemma}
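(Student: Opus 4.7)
The plan is to perform two monotone-convergence reductions, bringing the problem down to a continuous bounded $\rho$ on a common interval, and then close with a partition estimate exploiting piecewise lower semi-continuity of length.

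First, since $\rho_n \nearrow \rho$, for each fixed $m$ we have $\int_{\gamma_n}\rho_n\,ds \ge \int_{\gamma_n}\rho_m\,ds$ whenever $n\ge m$. If the conclusion is known for the constant sequence $\rho_n\equiv\rho_m$, then $\liminf_n\int_{\gamma_n}\rho_n\,ds \ge \int_\gamma\rho_m\,ds$; monotone convergence along the arc length parametrisation of $\gamma$ then gives $\int_\gamma\rho_m\,ds \to \int_\gamma\rho\,ds$, and letting $m\to\infty$ closes the reduction. Next, any non-negative l.s.c.\ $\rho$ is the increasing pointwise supremum of bounded non-negative Lipschitz functions (for instance the truncated Moreau--Yosida approximations $\min(k,\inf_y(\rho(y)+k\,d(x,y)))$), so the same argument reduces us to a continuous bounded $\rho$.

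Now assume $\rho$ is continuous and bounded and $\gamma_n\to\gamma$ uniformly on $[a,b]$. Eventually, the images $\gamma_n([a,b])$ lie in some compact set $K$ on which $\rho$ is uniformly continuous. Fixing $\varepsilon>0$, I partition $[a,b]=\bigsqcup_{j=1}^N I_j$ so finely that $\rho\circ\gamma$ oscillates by at most $\varepsilon$ on each $I_j$. Setting $c_j=\inf_{I_j}\rho\circ\gamma \ge 0$, uniform continuity of $\rho$ on $K$ combined with uniform convergence of $\gamma_n$ gives $\inf_{I_j}\rho\circ\gamma_n\to c_j$. Then
\[
\int_{\gamma_n}\rho\,ds \ge \sum_{j=1}^N \bigl(\inf_{I_j}\rho\circ\gamma_n\bigr)\,\len(\gamma_n|_{I_j}),
\]
and applying the piecewise lower semi-continuity of length under uniform convergence to each fixed $I_j$, together with superadditivity of $\liminf$ on finite sums of non-negative terms, yields
\[
\liminf_n \int_{\gamma_n}\rho\,ds \ge \sum_j c_j\,\len(\gamma|_{I_j}) \ge \int_\gamma \rho\,ds - \varepsilon\,\len(\gamma).
\]
Letting $\varepsilon\to 0$ finishes the uniform convergence case. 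For the $\Curv(X)$ case, Hausdorff convergence of graphs in $\RR\times X$ forces $a_n\to a$ and $b_n\to b$, and combined with uniform continuity of the limit $\gamma$ one checks that $\gamma_n\to\gamma$ uniformly on every compact subinterval $[a+\delta,b-\delta]$ (for $n$ large depending on $\delta$). Non-negativity of $\rho$ gives $\int_{\gamma_n}\rho\,ds\ge\int_{\gamma_n|_{[a+\delta,b-\delta]}}\rho\,ds$, so the uniform convergence case applies and letting $\delta\to 0$ concludes. The lower semi-continuity of $\gamma\mapsto\int_\gamma\rho\,ds$ on $\Curv(X)$ is then the special case $\rho_n\equiv\rho$ of the first assertion.

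The main technical obstacle is the partition estimate. Since we have no uniform Lipschitz bound on the $\gamma_n$, the individual lengths $\len(\gamma_n|_{I_j})$ are not controlled, and a global error of the form $\varepsilon\,\len(\gamma_n)$ would be unacceptable. The fix is to use lower semi-continuity of length \emph{piecewise} (not globally), combined with non-negativity of $\rho$, so that the $\liminf$ can be pushed inside the finite sum of products $c_j\,\len(\gamma_n|_{I_j})$ without any global control on lengths.
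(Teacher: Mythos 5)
Your proof is correct (modulo an implicit non-negativity assumption, see below) and takes a genuinely more self-contained route than the paper. Both proofs share the monotone truncation idea: fix $m$, use $\int_{\gamma_n}\rho_n\,ds \ge \int_{\gamma_n}\rho_m\,ds$ for $n\ge m$, pass to the $\liminf$, and then send $m\to\infty$. Where the paper stops there and outsources the remaining work to two citations --- \cite[Theorem 1]{WCW} (to reparametrize and turn Hausdorff convergence of graphs into uniform convergence on a common domain $[0,1]$) and \cite[Lemma 2.2]{JJRRS} (for lower semicontinuity of $\gamma\mapsto\int_\gamma g\,ds$ for fixed l.s.c.\ $g$ under uniform convergence) --- you prove both ingredients directly. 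Your Moreau--Yosida step reduces the base case to a \emph{bounded continuous} $\rho$, and your partition argument supplies the lower semicontinuity from scratch; the observation that one must apply lower semicontinuity of length \emph{piecewise} on each $I_j$ (rather than globally, where the uncontrolled lengths $\len(\gamma_n)$ would make an $\varepsilon\,\len(\gamma_n)$ error unusable) is exactly the right point and is handled correctly. Your treatment of the $\Curv(X)$ topology is also different: instead of reparametrizing to a common domain, you deduce uniform convergence on every compact subinterval $[a+\delta,b-\delta]$ directly from Hausdorff convergence of graphs plus uniform continuity of $\gamma$, restrict using non-negativity, and let $\delta\to 0$. This is elementary and avoids \cite{WCW} entirely.

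One caveat worth flagging: your argument uses non-negativity of $\rho$ (and of the $\rho_n$) in several essential places --- the Moreau--Yosida truncations, the bound $c_j\ge 0$ needed for the superadditivity of $\liminf$ over the finite sum, and the inequality $\int_{\gamma_n}\rho\,ds\ge\int_{\gamma_n|_{[a+\delta,b-\delta]}}\rho\,ds$. The lemma as stated writes $\rho_n\colon X\to\RR$ without a sign restriction. In every application in the paper (admissible densities for modulus) the functions are non-negative, so this is not a real gap, but it would be cleaner to either state the non-negativity hypothesis explicitly or note that, since all curves lie in a fixed compact set on which an l.s.c.\ function is bounded below, one may shift by a constant and absorb the correction via the convergence of lengths.
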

\begin{proof} First, we reduce the case of $\gamma_n \in \Curv(X)$ to the case of uniform convergence with a common domain. Suppose that $\gamma_n$ converge to $\gamma$ in $\Curv(X)$. Then their graphs $\Gamma(\gamma_n)$ converge to $\Gamma(\gamma)$ in the Hausdorff metric on subsets $X \times \bR$. This claim still holds if we reparametrize each curve by an increasing affine map to have domain $[0,1]$.  After such reparametrization, by \cite[Theorem 1]{WCW}, we see that these reparametrizations converge uniformly.

Without loss of generality, we thus assume that $a=0$ and $b=1$, and that the parametrized curves converge uniformly. Note that for each fixed lower semicontinuous $g\colon X \rightarrow \RR$, the map $\gamma \mapsto \int_\gamma g \,ds$ is lower semicontinuous on the space of curves $\gamma\colon [0,1]\rightarrow X$ with the topology of uniform convergence. See, for example, \cite[Lemma 2.2]{JJRRS}.

Now, fix $N \in \bN$. Then,  for $n \geq N$ we get that
$$\int_{\gamma_n} \rho_n ~ds \geq \int_{\gamma_n} \rho_N ~ds.$$
Taking a limit inferior on both sides and using the lower semi-continuity noted above, we get
$$\liminf_{n \to \infty} \int_{\gamma_n} \rho_n ~ds \geq \int_{\gamma} \rho_N ~ds.$$
Finally, sending $N \to \infty$ and using dominated convergence completes the claim.

The latter claim on lower semi-continuity is a restatement of the first claim, by setting $\rho_n=\rho$ and assuming that $\gamma_n \in \Curv(X)$ converge to $\gamma \in \Curv(X)$.
\end{proof}

We will need the following Lemma. The proof is almost identical to the ones in  \cite{HK, Keith,exnerova2019plans}, however, for completeness and since there is significant variation in the literature on terminology, we recall the main steps of the argument. We will follow the scheme of the proof of \cite[Proposition 6]{Keith}, highlighting the main differences along the way, and the reader may consult it for additional details.

\begin{lemma}\label{lem:contmod} If $\Gamma \subset \Curv(X)$ is compact, then $\Mod_p(\Gamma,\mu) = \Mod_p^c(\Gamma,\mu)$
\end{lemma}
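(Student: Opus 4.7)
The inequality $\Mod_p(\Gamma,\mu)\leq \Mod_p^c(\Gamma,\mu)$ is immediate since every admissible function in the second class is admissible in the first. The entire task is the reverse inequality, which I would deduce by producing, for each Borel admissible $\rho$ with $\int_X\rho^p\,d\mu$ nearly minimal, a continuous compactly supported admissible function whose $L^p$ norm is only slightly larger.

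First I would reduce to the case of lower semi-continuous $\rho$. Fix $\varepsilon>0$ and take Borel $\rho\in\cA(\Gamma)$ with $\int_X\rho^p\,d\mu\leq \Mod_p(\Gamma,\mu)+\varepsilon$. Since $\mu$ is Radon, the Vitali--Carath\'eodory theorem yields a lower semi-continuous function $\tilde\rho\co X\to[0,\infty]$ with $\tilde\rho\geq\rho$ and $\int_X\tilde\rho^p\,d\mu\leq \int_X\rho^p\,d\mu+\varepsilon$. Since $\tilde\rho\geq\rho$, $\tilde\rho\in\cA(\Gamma)$ as well.

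Next I would approximate $\tilde\rho$ from below by a sequence of non-negative continuous compactly supported functions. Since $X$ is proper (and hence locally compact, $\sigma$-compact, and metric), any non-negative lower semi-continuous function on $X$ is the pointwise increasing limit of a sequence $\rho_n\in C_c(X)$ with $\rho_n\geq 0$. By monotone convergence, $\int_X\rho_n^p\,d\mu\to\int_X\tilde\rho^p\,d\mu$.

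The main obstacle, and the step where compactness of $\Gamma$ is essential, is to upgrade admissibility of $\tilde\rho$ to (near-)admissibility of $\rho_n$ for large $n$, uniformly over $\Gamma$. By Lemma \ref{lem:lowersemicont} applied to the constant sequence, each functional $F_n\co\Gamma\to[0,\infty]$ defined by $F_n(\gamma):=\int_\gamma\rho_n\,ds$ is lower semi-continuous on $\Curv(X)$, and by Lemma \ref{lem:lowersemicont} applied to the increasing sequence $\rho_n\nearrow\tilde\rho$, we have $\lim_n F_n(\gamma)\geq \int_\gamma\tilde\rho\,ds\geq 1$ for every $\gamma\in\Gamma$. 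Now I would run the following open cover argument: the sets
\[
U_n := \{\gamma\in\Gamma : F_n(\gamma) > 1-\varepsilon\}
\]
are open in $\Gamma$ (by lower semi-continuity of $F_n$), they are nested increasing (since $\rho_n$ is increasing), and their union is all of $\Gamma$. Compactness of $\Gamma$ forces $U_N=\Gamma$ for some $N$, i.e., $\int_\gamma\rho_N\,ds\geq 1-\varepsilon$ for every $\gamma\in\Gamma$. Hence $(1-\varepsilon)^{-1}\rho_N\in C_c(X)$ is admissible for $\Gamma$, and
\[
\Mod_p^c(\Gamma,\mu)\leq (1-\varepsilon)^{-p}\int_X\rho_N^p\,d\mu \leq (1-\varepsilon)^{-p}\bigl(\Mod_p(\Gamma,\mu)+2\varepsilon\bigr).
\]
Sending $\varepsilon\to 0$ finishes the proof. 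The only subtlety worth double-checking is the initial Vitali--Carath\'eodory step when $\int_X\rho^p\,d\mu=+\infty$, but in that case the target inequality is vacuous, so one may assume $\rho\in L^p(\mu)$ from the outset.
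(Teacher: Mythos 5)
Your argument is correct and follows the same outline as the paper's: the Vitali--Carath\'eodory reduction to a lower semi-continuous admissible function, approximation from below by an increasing sequence in $C_c(X)$, and then compactness of $\Gamma$ combined with Lemma~\ref{lem:lowersemicont} to upgrade to uniform near-admissibility. The only difference is in how compactness is deployed at the end: the paper extracts a convergent subsequence of near-infimizing curves $\gamma_n$ and applies the joint ($\gamma_n\to\gamma$, $\rho_n\nearrow\rho$) lower semi-continuity of Lemma~\ref{lem:lowersemicont}, whereas you run a nested open cover argument with $U_n=\{\gamma:\int_\gamma\rho_n\,ds>1-\varepsilon\}$, needing only the semi-continuity of each fixed $\gamma\mapsto\int_\gamma\rho_n\,ds$ together with monotone convergence along each curve; this is a clean cosmetic variant of the same idea.
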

\begin{proof} Since continuous admissible functions are also Borel admissible, then $\Mod_p(\Gamma,\mu) \leq \Mod_p^c(\Gamma,\mu)$. We proceed to show the reverse inequality by an approximation argument. If $\Mod_p(\Gamma,\mu)= \infty$, there is nothing to prove. Otherwise, take any $\rho$ admissible for $\Mod_p(\Gamma,\mu)$ with finite $L^p$-norm. Fix $\epsilon>0$. By the Vitali-Caratheodory theorem, we can approximate any Borel $\rho$ from above by a lower semi-continuous function $\widetilde{\rho}$ with $\|\widetilde{\rho}\|_p^p \leq \|\rho\|_p^p+\epsilon$. 

Since $\Gamma$ is compact, we must have a some bounded ball $B(x,R) \subset X$ which contains all of the curves. In contrast to \cite[Proposition 6]{Keith}, we do not need to adjust $\widetilde{\rho}$ further.

Next, we approximate from below. Let $\widetilde{\rho_n} \nearrow \widetilde{\rho}$ be a sequence of continuous functions with compact support forming an increasing sequence and converging to $\rho$.

As in \cite[Proposition 6]{Keith}, it suffices to prove that
\begin{equation}\label{eq:limsup}
1 \leq \limsup_{n \to \infty} \inf_{\gamma \in \Gamma} \int_{\gamma} \widetilde{\rho_n}~ds.
\end{equation}
Indeed, in this case the function $(1-\epsilon)^{-1}\widetilde{\rho}_n$ would be admissible for $\Gamma$ for sufficiently large $n$, forcing
$$ \Mod_p^c(\Gamma,\mu) \leq (1-\epsilon)^{-p}(\|\rho\|_p^p+\epsilon).$$
Since this holds for all Borel admissible $\rho$ and $\epsilon>0$, the desired inequality immediately follows.

Choose curves $\gamma_n \in \Gamma$ so that
$$\liminf_{n \to \infty} \int_{\gamma_n} \widetilde{\rho_n} ~ds \leq \limsup_{n \to \infty} \inf_{\gamma \in \Gamma} \int_{\gamma} \widetilde{\rho_n}~ds.$$

Since $\Gamma$ is compact, there exists a subsequence of $\gamma_n$ which converges in $\Curv(X)$ to some $\gamma$, which we continue to label $\gamma_n$. (Unlike \cite[Proposition 6]{Keith}, we do not need to reparametrize, or estimate lengths, as we are assuming compactness.)  By using Lemma \ref{lem:lowersemicont}, the fact that $\rho \leq \widetilde{\rho}$, and admissibility, we obtain \eqref{eq:limsup}:

$$1 \leq \int_\gamma \rho ~ds \leq \int_\gamma \widetilde{\rho} ~ds \leq \liminf_{n \to \infty} \int_{\gamma_n} \widetilde{\rho_n} ~ds \leq \limsup_{n \to \infty} \inf_{\gamma \in \Gamma} \int_{\gamma} \widetilde{\rho_n}~ds.$$

\end{proof}

A few standard, well-known facts about modulus will be used repeatedly: (See \cite[Ch. 7]{He}.)
\begin{equation}\label{eq:monotone}
\text{If } \Gamma'\subseteq\Gamma \text{ then } \Mod_p(\Gamma',\mu)\leq\Mod_p(\Gamma,\mu).
\end{equation}
\begin{equation}\label{eq:subadditive}
\Mod_p\left( \cup_{i=1}^\infty \Gamma_i,\mu\right)\leq\sum_{i=1}^\infty \Mod_p(\Gamma_i,\mu).
\end{equation}

We will also need the following notion, that translates a measure on a curve family into a measure on a space.
\begin{definition}\label{def:curvemeasure}
Let $P$ be a Radon probability measure on $\Curv(X)$. It then defines an induced Radon measure $\eta_P$ on $X$ by

$$\eta_P(E) = \int_{\Curv(X)} \int_\gamma 1_E ~ds ~dP(\gamma).$$

\end{definition}

Our main result in this section is the following. 
\begin{proposition}\label{prop:modmeasure}
Let $X$ be a proper metric space with a Radon measure $\mu$. Let $p\in[1,\infty)$, and let $q \in (1,\infty]$ be the dual exponent, so that $\frac{1}{p}+\frac{1}{q}=1$. Let  $\Gamma\subseteq \Curv(X)$ be compact.

If $\Mod_p(\Gamma,\mu) \in (0,\infty)$, then there is a Radon probability measure $P$ on $\Curv(X)$, so that
$$\eta_P = f\mu,$$
with $f \in L^q(\mu)$ and
\begin{equation}\label{eq:density}
||f||_{L^q} = \Mod_p(\Gamma,\mu)^\frac{-1}{p}.
\end{equation}
\end{proposition}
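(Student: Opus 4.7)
My plan is to produce $P$ via two successive Hahn--Banach separations: first in $L^p(\mu)$ to extract a non-negative ``density candidate'' $g \in L^q(\mu)$, and then in $(M(K), \mathrm{weak}^*)$ to realize $g\mu$ as $\eta_P$ plus a non-negative remainder. The main obstacle is the second separation, where positivity of the separating functional must be enforced to close the argument.

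By Lemma~\ref{lem:contmod} I work with continuous admissible functions, and by compactness of $\Gamma \subseteq \Curv(X)$ together with properness of $X$, there is a compact $K \subseteq X$ containing $\im(\gamma)$ for every $\gamma \in \Gamma$. Set $M_p := \Mod_p(\Gamma,\mu)^{1/p}$ and let $\mathcal{A}^c \subseteq L^p(\mu|_K)$ denote the convex set of non-negative continuous admissible functions, so $\inf_{\mathcal{A}^c}\|\cdot\|_p = M_p \in (0,\infty)$. Separating $\mathcal{A}^c$ from the open $L^p$-ball of radius $M_p$ yields $g \in L^q(\mu|_K)$ and $c > 0$ with $M_p\|g\|_q \leq c \leq \int \rho g\,d\mu$ for $\rho \in \mathcal{A}^c$. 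Stability of $\mathcal{A}^c$ under addition of non-negative continuous functions forces $g \geq 0$ $\mu$-a.e.; dividing through by $c$, I obtain $g \geq 0$, $\|g\|_q \leq M_p^{-1}$, and $\int \rho g\,d\mu \geq 1$ for all $\rho \in \mathcal{A}^c$.

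The harder step realizes $g\mu$ using curve measures. Consider
$$ S := \{\eta_P : P \text{ is a Borel probability on } \Gamma\} + M^+(K) \subseteq M(K), $$
where $M^+(K)$ is the cone of finite positive Radon measures on $K$. Weak-$*$ continuity of $\gamma \mapsto \int_\gamma \psi\,ds$ for $\psi \in C(K)$ (via Lemma~\ref{lem:lowersemicont} applied to $\pm\psi$) makes $P \mapsto \eta_P$ weak-$*$ continuous, so the first summand is weak-$*$ compact and $S$ is weak-$*$ closed and convex. If $g\mu \notin S$, Hahn--Banach in $(M(K), \mathrm{weak}^*)$ furnishes $\psi \in C(K)$ and $c' \in \RR$ with
$$ \int \psi\,d(g\mu) < c' \leq \int \psi\,d\eta_P + \int \psi\,d\lambda $$
for all admissible $P$ and $\lambda$. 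Taking $\lambda$ a large positive multiple of a Dirac on any point of $K$ forces $\psi \geq 0$; taking $\lambda = 0$ and $P = \delta_\gamma$ gives $\int_\gamma \psi\,ds \geq c'$ for every $\gamma \in \Gamma$. If $c' > 0$, rescaling puts $\psi/c'$ in $\mathcal{A}^c$ yet $\int (\psi/c') g\,d\mu < 1$, contradicting the defining property of $g$; if $c' \leq 0$, then $\int \psi\,d(g\mu) < 0$ contradicts $\psi, g \geq 0$. Hence $g\mu = \eta_P + \lambda$ for some probability $P$ on $\Gamma$ and $\lambda \in M^+(K)$.

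Finally, $\eta_P \leq g\mu$ yields $\eta_P = f\mu$ with $0 \leq f \leq g$, so $\|f\|_q \leq \|g\|_q \leq M_p^{-1}$. The reverse inequality follows by H\"older: for any $\rho \in \mathcal{A}^c$,
$$ 1 = P(\Gamma) \leq \int_\Gamma \int_\gamma \rho\,ds\,dP = \int \rho f\,d\mu \leq \|\rho\|_p\|f\|_q, $$
so taking the infimum over $\rho$ gives $\|f\|_q \geq M_p^{-1}$. Combining, $\|f\|_{L^q} = \Mod_p(\Gamma,\mu)^{-1/p}$.
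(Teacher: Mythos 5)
Your overall strategy -- replacing Sion's minimax theorem by two successive Hahn--Banach separations, the first in $L^p(\mu)$ to extract a candidate density $g$ and the second in $(M(K),\mathrm{weak}^*)$ to realize a submeasure of $g\mu$ as $\eta_P$ -- is genuinely different from the paper's and, in outline, viable. The first separation, the positivity of $g$, the normalization, the contradiction at the end of the second separation, and the final norm computation via H\"older are all fine.

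However, there is a genuine gap at the step where you claim $S$ is weak-$*$ closed. You assert that ``weak-$*$ continuity of $\gamma \mapsto \int_\gamma \psi\,ds$ for $\psi \in C(K)$ (via Lemma~\ref{lem:lowersemicont} applied to $\pm\psi$) makes $P\mapsto\eta_P$ weak-$*$ continuous, so the first summand is weak-$*$ compact.'' This is false: the map $\gamma\mapsto\int_\gamma\psi\,ds$ is in general only \emph{lower} semicontinuous on $\Curv(X)$, not continuous. Lemma~\ref{lem:lowersemicont} cannot be applied to $-\psi$: taking $\psi\equiv 1$, the conclusion for $\rho=-\psi$ would read $\limsup_n \len(\gamma_n)\le\len(\gamma)$, which fails -- e.g.\ $\gamma_n(t)=(t,\sin(n^2t)/n)$ on $[0,1]$ converges in $\Curv(\RR^2)$ to $\gamma(t)=(t,0)$ while $\len(\gamma_n)\to\infty$. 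In particular a compact $\Gamma\subseteq\Curv(X)$ need not have uniformly bounded lengths, so $\{\eta_P\}$ need not even be a bounded (hence not a compact) subset of $M(K)$, and some $\eta_P$ may fail to be finite at all. The lemma must be read with the implicit hypothesis $\rho\ge 0$, which is how the paper uses it: it yields only \emph{upper} semicontinuity of $\Phi(g,\cdot)$, which is precisely why the paper reaches for Sion's minimax theorem rather than a naive duality.

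The gap is repairable, but not by the argument you give. Closedness of $S$ (restricted to finite measures) can be recovered from lower semicontinuity alone: if $\nu_n=\eta_{P_n}+\lambda_n\to\nu$ weak-$*$ in $M(K)$, pass to a weak-$*$ limit $P$ of $(P_n)$ in the compact set of probabilities on $\Gamma$; for each continuous $\psi\ge 0$, lower semicontinuity of $P'\mapsto\int\psi\,d\eta_{P'}$ gives $\int\psi\,d\eta_P\le\liminf\int\psi\,d\eta_{P_n}\le\int\psi\,d\nu$, whence $\eta_P\le\nu$, $\eta_P$ is finite, and $\nu=\eta_P+(\nu-\eta_P)\in S$. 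If you replace the continuity claim with this argument (and define $S$ so as to include only finite $\eta_P$), your proof goes through and constitutes an alternative, more elementary route to the proposition than the minimax argument. As it stands, though, the justification of the key compactness/closedness step is incorrect.
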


\begin{remark}\label{rmk:modmeasure}
The proof of Proposition \ref{prop:modmeasure} that we give below is essentially already contained in work of the second-named author and his collaborators in \cite[Section 3]{Durand}, though in a less general context.

This result can also be deduced from \cite{Amb} if $p>1$, where it was shown that modulus is dual to probability measures in a much more general context (for $p>1$).

For completeness, and since the proof is short, we include an argument for all $p\geq 1$ in the case where $\Gamma$ is a compact family of curves.
\end{remark}

Recall that $C_c(X)$ denotes the space of continuous functions with compact support, with the uniform topology.

The main technical method in the proof of Proposition \ref{prop:modmeasure} is to express the Lagrangian in a given form and apply the following minimax principle on it. See also \cite[Section 9]{Rudin} for another version.

\begin{theorem}[\em Sion's minimax theorem, Corollary 3.3 in 
\cite{sion}\footnote{The formulation here is slightly simplified.}]
\label{thm:minmax} 
Suppose that
\begin{enumerate}[(i)]
\item $G$ is a convex subset of some topological vector space,
\item $K$ is a compact convex subset of some topological vector space, and
\item $F \co G \times K \to \bR$ satisfies
\begin{enumerate}[(a)]
\item $F(\cdot, y)$ is convex and lower semi-continuous on $G$ for every $y \in K$,
\item $F(x,\cdot)$ is concave and upper semi-continuous on $K$ for every $x \in G$.
\end{enumerate}
\end{enumerate}
Then we have the equality
\[
\sup_{y \in K} \ \inf_{x \in G} \ F(x,y) \, = \inf_{x \in G} \ \sup_{y \in K}\  F(x,y).
\]
\end{theorem}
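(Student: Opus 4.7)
The plan is to prove Sion's minimax theorem via the classical super-level-set intersection strategy. The inequality $\sup_{y \in K} \inf_{x \in G} F \le \inf_{x \in G} \sup_{y \in K} F$ is trivial. For the reverse, fix $\alpha < \inf_{x \in G} \sup_{y \in K} F$ and define
\[
 S_\alpha(x) := \{y \in K : F(x,y) \ge \alpha\}, \qquad x \in G.
\]
By hypothesis (iii)(b), each $S_\alpha(x)$ is closed (hence compact) and convex in $K$; by the choice of $\alpha$, each is non-empty. A common point $y^\ast \in \bigcap_{x \in G} S_\alpha(x)$ would satisfy $\inf_{x \in G} F(x, y^\ast) \ge \alpha$, giving $\sup_{y \in K} \inf_{x \in G} F \ge \alpha$; letting $\alpha \nearrow \inf_{x \in G} \sup_{y \in K} F$ then completes the proof. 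By compactness of $K$, it suffices to establish the \emph{finite intersection property}: $\bigcap_{i=1}^n S_\alpha(x_i) \neq \emptyset$ for every finite $\{x_1,\dots,x_n\} \subseteq G$.

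The finite intersection property is proved by induction on $n$; the base case $n=1$ is immediate. The inductive step reduces to the essential two-variable lemma:
\[
 \sup_{y \in K} \min\bigl(F(x_1,y), F(x_2,y)\bigr) \ge \inf_{t \in [0,1]} \sup_{y \in K} F\bigl((1-t)x_1 + tx_2,\, y\bigr),
\]
or, more precisely, a version relative to a closed convex ``target'' $T \subseteq K$ (taken to be $\bigcap_{i \ge 3} S_\alpha(x_i)$, non-empty by the inductive hypothesis applied to the $(n-1)$-element subfamilies $\{x_3,\dots,x_n,x_1\}$ and $\{x_3,\dots,x_n,x_2\}$, and closed convex as an intersection of such sets). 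With this lemma in hand, one extracts a common point $y \in S_\alpha(x_1) \cap S_\alpha(x_2) \cap T$, completing the induction.

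The main obstacle is the two-variable lemma itself, which constitutes the genuine content of Sion's theorem. I would argue by contradiction: assume $\sup_{y \in K} \min(F(x_1,\cdot), F(x_2,\cdot)) < \alpha < \inf_t \sup_K F(x_t, \cdot)$, where $x_t := (1-t)x_1 + tx_2 \in G$ by the convexity of $G$. The closed convex sets $A_i := \{y \in K : F(x_i, y) \le \alpha\}$ then cover $K$, while for each $t$ there exists $y_t \in K$ with $F(x_t, y_t) > \alpha$. The contradiction is obtained by tracking the interaction between the partition $K = A_1 \cup A_2$ and the family $\{y_t\}_{t \in [0,1]}$: convexity of $F(\cdot,y)$ forces $F(x_t,y) \le \max\bigl(F(x_1,y), F(x_2,y)\bigr)$, hence $F(x_t,y) \le \alpha$ throughout $A_1 \cap A_2$; concavity of $F(x_t,\cdot)$ makes each super-level set convex; and the semi-continuity hypotheses ensure the relevant sets are closed. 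A one-dimensional analysis along the segment $[y_1,y_2]$ --- for suitable $y_1 \in K \setminus A_1$ and $y_2 \in K \setminus A_2$ --- combined with an intermediate value argument applied to the parametrically varying super-level sets $\{y \in K : F(x_t,y) > \alpha\}$ as $t$ ranges over $[0,1]$, produces the contradiction. Carrying out this planar combinatorial--geometric analysis precisely, with careful bookkeeping of strict versus weak inequalities at the boundaries of the various level sets, is the delicate technical core of Sion's original argument.
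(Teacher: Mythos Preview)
The paper does not prove this statement at all: Theorem~\ref{thm:minmax} is quoted as an external result (Corollary~3.3 of \cite{sion}) and used as a black box in the proof of Proposition~\ref{prop:modmeasure}. So there is no ``paper's own proof'' to compare your proposal against.

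That said, your outline is a faithful sketch of the classical argument for Sion's theorem: reduce the nontrivial inequality to the finite intersection property of the compact convex super-level sets $S_\alpha(x)$, induct on the number of points, and isolate the two-point lemma as the core difficulty. Your description of that lemma's proof (the interplay between the convex cover $K=A_1\cup A_2$, the convex super-level sets of $F(x_t,\cdot)$, and a connectedness/intermediate-value argument in the parameter $t$) is accurate in spirit. The one place where your write-up is genuinely incomplete rather than merely terse is the last paragraph: the ``one-dimensional analysis along $[y_1,y_2]$'' and the ``intermediate value argument applied to the parametrically varying super-level sets'' are exactly where all the work lies, and you have not actually carried it out---you have only named the ingredients. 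In a full proof one must, for instance, show that the set of $t\in[0,1]$ for which the (nonempty, convex) set $\{y:F(x_t,y)>\alpha\}$ meets $K\setminus A_1$ is both open and closed in $[0,1]$ (or some equivalent connectedness statement), and this requires a careful use of both semicontinuity hypotheses and both convexity/concavity hypotheses simultaneously. As written, your proposal is a correct plan but not yet a proof.
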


\begin{proof}[Proof of Proposition \ref{prop:modmeasure}]
Note that, since $\Gamma$ is compact, all curves of $\Gamma$ must lie in a bounded subset of $X$, which must be compact and of finite $\mu$-measure. Thus, it suffices to assume that $X$ is compact and $\mu(X)<\infty$ in the proof.

Let $K$ be the set of Radon probability measures on $\Gamma$. Then $K$ is a compact, convex subset of the space of Radon measures on $\Curv(X)$, equipped with the topology of weak* convergence. Let $G=\{g \co X \to [0,1]\} \cap C_c(X)$.

Consider the functional $\Phi \co G \times K \to \bR $
$$\Phi(g,P) = \|g\|_{L^p(\mu)} - \Mod_p(\Gamma,\mu)^\frac{1}{p}\int_X g ~d\eta_P.$$
Lemma \ref{lem:lowersemicont} gives the upper semi-continuity of $\Phi(g, \cdot)$. The other conditions for $\Phi=F$ in Theorem \ref{thm:minmax} are verified as in \cite[Theorem 3.7]{Durand} . 

Let $g \in G$ be any fixed function. Since $g$ has compact support, it has finite $L_p(\mu)$-norm. Fix $\epsilon>0$. The function
$$\overline{g}_\epsilon = \frac{\Mod_p(\Gamma,\mu)^\frac{1}{p} g}{(1+\epsilon)||g||_{L^p}}$$
cannot be admissible, and thus there is a curve $\gamma_\epsilon \in \Gamma$ with
$$\int_{\gamma_\epsilon} \overline{g}_\epsilon ~ds \leq 1.$$
Then,
$$\int_{\gamma_\epsilon} g ~ds \leq (1+\epsilon)||g||_{L^p}\Mod_p(\Gamma,\mu)^\frac{-1}{p}.$$
Let $P_{\gamma_\epsilon} = \delta_{\gamma_\epsilon}$, the Dirac measure supported on $\gamma_\epsilon \in \Curv(X)$. Then
$$\Phi(g,P_{\gamma_\epsilon}) \geq -\epsilon ||g||_{L^p(\mu)}.$$
Therefore, sending $\epsilon \to 0$, we obtain
$$\inf_{g \in G} \sup_{P \in K} \Phi(g,P) \geq 0.$$

By Theorem \ref{thm:minmax}, we get 
$$\sup_{P \in K} \inf_{g \in G} \Phi(g,P) \geq 0.$$

Thus, there is a sequence of measures $P_\epsilon\in K$ so that
$$\Phi(g,P_\epsilon) \geq -\epsilon,$$
for each $g \in G$. By upper semi-continuity in $P_\epsilon$, and weak compactness, we can extract a weak limit $P$ for which it holds that
$$\Phi(g,P) \geq 0,$$
for each $g \in G$. In particular
\begin{equation}\label{eq:riesz}
\int_X g ~d\eta_P \leq \Mod_p(\Gamma,\mu)^\frac{-1}{p}||g||_{L^p(\mu)}.
\end{equation}

Thus, the functional
$$g \to \int_X g ~d\eta_P$$ extends to a $L_p$ bounded linear functional. Then, by the Riesz representation theorem we have
$$\eta_P = f \mu,$$
with the bound
$$||f||_{L^q} \leq \Mod_p(\Gamma,\mu)^\frac{-1}{p}$$
following from Estimate \eqref{eq:riesz}.

On the other hand, by Lemma \ref{lem:contmod} we can find a sequence of continuous admissible $\rho_i$ so that 
$$\lim_{i \to \infty} \int \rho_i^p ~d\mu = \Mod_p(\Gamma,\mu).$$

By a standard approximation argument of continuous functions by simple functions,  and the fact that $\rho_i$ is admissible, we obtain that
$$\int_X \rho_i ~d\eta_P = \int_{\Curv(X)} \int_\gamma \rho_i ~ds ~dP \geq 1.$$
Therefore, we get
$$1 \leq \int_X \rho_i ~d\eta_P  \leq \Mod_p(\Gamma,\mu)^\frac{-1}{p}||\rho_i||_{L^p}.$$
Sending $i \to \infty$, we get
\begin{equation}\label{eq:inlimit}
1 \leq \lim_{i \to \infty} \int_X \rho_i ~d\eta_P \leq  \Mod_p(\Gamma,\mu)^\frac{-1}{p}  \Mod_p(\Gamma,\mu)^\frac{1}{p} =1.
\end{equation}
Thus, since we get equality in the limit, $\Mod_p(\Gamma,\mu)^\frac{-1}{p} $ equals the norm of the functional $g \to \int g ~d\eta_P$, and thus $||f||_{L^q} = \Mod_p(\Gamma,\mu)^\frac{-1}{p}$.
\end{proof}

\begin{remark} Further details could be obtained. If $p>1$, then the sequence in the last paragraph $L^p$-converges  $\rho_i \to \rho^*$, where $\rho^*$ is an admissible function for $\Gamma \setminus \Gamma'$ where $\Gamma'$ has modulus zero, see e.g. \cite{Ziemer}. Then, $\rho^*$ plugged into Estimate \ref{eq:inlimit} (without the limit), yields 
\begin{equation}\label{eq:equality}
(\rho^*)^p = \Mod_p(\Gamma,\mu)^\frac{p+q}{p}f^q
\end{equation}
almost everywhere. Further, $\int_\gamma \rho^* ~ds=1$ for $P$-almost every $\gamma$, where $P$ is the probability measure coming from the statement.

\end{remark}

As an immediate corollary of Propositions \ref{prop:modmeasure} and \ref{prop:decomposition}, we explicitly point out the connection between modulus and Alberti representations:
\begin{corollary}\label{cor:modmeasure}
Let $X$ be a proper metric space with a Radon measure $\mu$. Suppose that $x \in X$ admits a family $\Gamma\subseteq \Curv(X)$ with positive modulus, that is
$$ \Mod_p(\Gamma,\mu)>0 \text{ for some } p\geq 1.$$
Then there is a non-zero measure on $X$, absolutely continuous to $\mu$, which admits an Alberti representation.  Furthermore, if $\phi : X \to \bR^n$ is bi-Lipschitz, then the Alberti representation can be chosen $\phi$-independent (i.e., in the $\phi$-direction of some cone $C$).
\end{corollary}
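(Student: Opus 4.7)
The plan is to chain Propositions \ref{prop:modmeasure} and \ref{prop:decomposition}, with a preliminary reduction to a \emph{compact} family, since that is the hypothesis of Proposition \ref{prop:modmeasure}. As $X$ is proper, $\Curv(X)$ is $\sigma$-compact (as noted in Section \ref{subsec:fragments}), so I write $\Curv(X) = \bigcup_j K_j$ with each $K_j$ compact. Monotonicity and countable subadditivity of modulus (\eqref{eq:monotone}, \eqref{eq:subadditive}), applied to the cover $\Gamma \subseteq \bigcup_j K_j$, yield some $j$ with $\Mod_p(K_j,\mu) > 0$, and I replace $\Gamma$ by $\Gamma_0 := K_j$. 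By compactness of $\Gamma_0 \subseteq \Haus(\RR \times X)$, all images $\im(\gamma)$, $\gamma \in \Gamma_0$, lie in a common bounded set $B$; by lower semi-continuity of length (Lemma \ref{lem:lowersemicont}) together with the non-constancy of curves in $\Curv(X)$, one has $\ell_0 := \min_{\gamma \in \Gamma_0} \ell(\gamma) > 0$. Thus $\rho = \ell_0^{-1}\mathbf{1}_B$ is admissible for $\Gamma_0$ with finite $L^p(\mu)$-norm, so $\Mod_p(\Gamma_0,\mu) \in (0,\infty)$.

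Proposition \ref{prop:modmeasure} then produces a Radon probability measure $P$ on $\Curv(X)$ supported on $\Gamma_0$ with $\eta_P = f\mu$, $f \in L^q(\mu)$. Set $\mu' := f\mu$; trivially $\mu' \ll \mu$, and $\mu'(X) = \int_{\Gamma_0} \ell(\gamma)\, dP(\gamma) \geq \ell_0 > 0$, so $\mu'$ is non-trivial. To invoke Proposition \ref{prop:decomposition} with $C := \RR^n \setminus \{0\}$ (a valid cone, by the comment following the definition of $\Cone(w,t)$), I note that since $\phi$ is bi-Lipschitz, for any Lipschitz curve $\gamma$ at a.e.\ $t \in \dom(\gamma)$ one has $|(\phi\circ\gamma)'(t)| \geq \Lip(\phi^{-1})^{-1} d_\gamma(t)$; hence whenever $d_\gamma(t) > 0$, $(\phi\circ\gamma)'(t) \in C$. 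Proposition \ref{prop:decomposition} then furnishes an Alberti representation of $\mu'$ in the $\phi$-direction of $C$.

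Finally, $\phi$-independence is automatic in this $k=1$ case: by the remark following Definition \ref{def:independent}, the cone-independence condition reduces to the non-vanishing of each $v \in C$, which is built into the definition of a cone. The only genuinely technical step is the initial compactness reduction and the finiteness estimate for $\Mod_p(\Gamma_0,\mu)$; everything afterwards is a direct chaining of the two propositions cited above.
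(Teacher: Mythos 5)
Your proof is correct and follows essentially the same route as the paper: reduce to a compact family of curves with positive and finite modulus, then chain Propositions \ref{prop:modmeasure} and \ref{prop:decomposition} using the cone $C=\RR^n\setminus\{0\}$. The only cosmetic difference is how the compact family is arranged — the paper explicitly constructs and closes subfamilies $\Gamma_n\subseteq\Gamma$ with built-in diameter, Lipschitz, and domain bounds (the diameter lower bound $\diam(\im(\gamma))\geq 1/n$ then directly giving finiteness of modulus), while you invoke the stated $\sigma$-compactness of $\Curv(X)$ and obtain finiteness from lower semi-continuity of length (Lemma \ref{lem:lowersemicont}) on the compact piece $K_j$.
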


As a reminder, $\Curv(X)$ by definition does not contain any constant curves. Indeed, the purpose of this restriction is to prevent the following: A family of curves containing a constant curve would allow for no admissible functions and thus have modulus $\infty$. Such a family could not be used to construct a non-trivial Alberti representation, e.g., if it contained no non-constant curves. 

\begin{proof}

Fix $x_0\in X$. For each $n\in\mathbb{N}$, define 
$$\Gamma_{n} = \{\gamma \in \Gamma: \im(\gamma)\subseteq \overline{B}(x_0,n), {\rm diam}(\im(\gamma)) \geq \frac{1}{n}, 
{\rm LIP}(\gamma) \leq n, {\rm dom}(\gamma) \subseteq [-n,n]\}.$$
Since $\Gamma\subseteq \cup_{n=1}^\infty\Gamma_n$, \eqref{eq:subadditive} tells us that $\Mod_p(\Gamma_n,\mu)>0$ for some $n$, which we now fix. 

Consider the closure $\overline{\Gamma}_n \subset \Curv(X)$, which is
compact by Arzel\`a-Ascoli. By \eqref{eq:monotone},
$$ \Mod_p(\overline{\Gamma}_n,\mu) \geq \Mod_p(\Gamma_n,\mu)>0.$$
Moreover, since each curve $\gamma\in\overline{\Gamma}_n$ has $\diam(\im(\gamma))\geq \frac{1}{n}$ and is contained in $\overline{B}(x_0,n)$, we have
$$ \Mod_p(\overline{\Gamma}_n,\mu) \leq n^p\mu(B(x_0,n)) < \infty.$$

By Proposition \ref{prop:modmeasure} we obtain a measure $P$ on $\Curv(X)$, so that the corresponding measure $\eta_P$ is absolutely continuous with respect to $\mu$. Then, applying Proposition \ref{prop:decomposition} with cone $C = \bR^n \setminus \{0\}$, we obtain a non-trivial Alberti representation for $\mu$ that is $\phi$-independent, i.e., in the $\phi$-direction of $C$. 
\end{proof}

\section{Proofs of the corollaries}\label{sec:corollaries}

In this section, we prove all the corollaries of our main result stated in the introduction.

Before beginning the proofs, the following basic lemma allows us to reduce problems of bi-Lipschitz embedding to Theorem \ref{thm:mainthm}.
\begin{lemma}\label{lem:ARbilip}
Let $X$ be a metric space with a doubling measure $\mu_0$. Suppose that $\mu\ll\mu_0$ supports $k$ $\phi$-independent Alberti representations, for some $\phi\colon X\rightarrow \RR^m$.

Let $f\colon X\rightarrow Y$ be a bi-Lipschitz homeomorphism.

Then
\begin{enumerate}[(i)]
\item $\hat{\mu}_0 :=f_{*}(\mu_0)$ is a doubling measure supported on $Y$.
\item $\hat{\mu} :=f_{*}(\mu) \ll \nu_0$.
\item $\hat{\mu}$ supports $k$ $\phi\circ f^{-1}$-independent Alberti representations.
\end{enumerate}
\end{lemma}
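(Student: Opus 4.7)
The plan is to verify the three claims in order, with (i) and (ii) being essentially bookkeeping and (iii) requiring an explicit pushforward construction of the Alberti representations.

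For (i), I would let $L \geq 1$ be a bi-Lipschitz constant for $f$. Given $y = f(x) \in Y$ and $r > 0$, the bi-Lipschitz property gives
\[
B_X(x, r/L) \subseteq f^{-1}(B_Y(y, r)) \subseteq B_X(x, Lr).
\]
Applying $\mu_0$ to the outer balls and the doubling constant $C$ of $\mu_0$ iterated finitely many times shows $\hat{\mu}_0(B_Y(y, 2r)) \leq C' \hat{\mu}_0(B_Y(y, r))$ for some $C'$ depending only on $C$ and $L$. Claim (ii) is immediate: if $\hat{\mu}_0(E) = 0$ for a Borel set $E \subseteq Y$, then $\mu_0(f^{-1}(E)) = 0$, so by $\mu \ll \mu_0$ we have $\mu(f^{-1}(E)) = 0$, i.e., $\hat{\mu}(E) = 0$.

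The substantive part is (iii). Given $\phi$-independent Alberti representations $\cA_i = (P_i, \nu_i)$ of $\mu$ in $\phi$-direction of cones $C_i$, I would define a candidate Alberti representation $\widehat{\cA}_i = (\widehat{P}_i, \widehat{\nu}_i)$ of $\hat{\mu}$ as follows. Observe that if $\gamma \in \Frag(X)$ with domain $K$, then $f \circ \gamma \in \Frag(Y)$ (with the same domain), since bi-Lipschitz maps preserve bi-Lipschitz fragments. This gives a continuous, injective map $F \colon \Frag(X) \to \Frag(Y)$, $F(\gamma) = f \circ \gamma$. Set $\widehat{P}_i = F_* P_i$ and, for $\eta = f \circ \gamma \in F(\Frag(X))$, set $\widehat{\nu}_{i,\eta} = f_* \nu_{i,\gamma}$ (and choose any convention on the complement of the image, which has $\widehat{P}_i$-measure zero). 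Absolute continuity $\widehat{\nu}_{i,\eta} \ll \HH^1|_{\im(\eta)}$ follows from the bi-Lipschitz invariance of $\HH^1$ and the assumption $\nu_{i,\gamma} \ll \HH^1|_{\im(\gamma)}$. For the representation identity, a Borel set $A \subseteq Y$ satisfies
\[
\int_{\Frag(Y)} \widehat{\nu}_{i,\eta}(A) \, d\widehat{P}_i(\eta) = \int_{\Frag(X)} \nu_{i,\gamma}(f^{-1}(A)) \, dP_i(\gamma) = \mu(f^{-1}(A)) = \hat{\mu}(A),
\]
using the definition of pushforward and $\cA_i$ being a representation of $\mu$. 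The Borel measurability required by Definition \ref{def:alberti}(iv) is preserved because $F$ is a homeomorphism onto its image and $f$ is a bi-Lipschitz homeomorphism, so the map $\eta \mapsto \widehat{\nu}_{i,\eta}(A \cap \eta(\dom(\eta) \cap I))$ is the composition of the analogous map for $\cA_i$ with $F^{-1}$ applied to the $\eta$-variable (and replacing $A$ by $f^{-1}(A)$).

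Finally, I would verify that the cones $C_i$ continue to witness $(\phi \circ f^{-1})$-independence of $\widehat{\cA}_1, \dots, \widehat{\cA}_k$. For $\widehat{P}_i$-a.e.\ $\eta = f \circ \gamma$, at each density point $t \in \dom(\eta) = \dom(\gamma)$ where $(\phi \circ \gamma)'(t)$ exists we have
\[
((\phi \circ f^{-1}) \circ \eta)'(t) = (\phi \circ \gamma)'(t) \in C_i,
\]
so $\widehat{\cA}_i$ is in the $(\phi \circ f^{-1})$-direction of $C_i$. The linear independence of vectors drawn from $C_1, \dots, C_k$ is a property of the cones and carries over unchanged, completing the proof. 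I do not foresee a serious obstacle here; the only care needed is in checking the Borel measurability clause and choosing the convention for $\widehat{\nu}_{i,\eta}$ outside $F(\Frag(X))$, both of which are routine.
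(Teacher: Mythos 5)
Your proof is correct and follows essentially the same approach as the paper's: define the homeomorphism $F\colon\Frag(X)\to\Frag(Y)$ by post-composition with $f$, push forward the fragment measure and the disintegration, verify the Alberti representation axioms by change of variables, and observe that the cone condition transfers verbatim since $(\phi\circ f^{-1})\circ(f\circ\gamma)=\phi\circ\gamma$. The only (harmless) cosmetic difference is that you hedge about the complement of $F(\Frag(X))$, whereas $F$ is in fact a bijection onto $\Frag(Y)$ (with inverse given by post-composition with $f^{-1}$), which is the point of view the paper takes.
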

\begin{proof}
The first two statements are immediate from the definitions.

For the third statement, let $\cA_i = (P^i, \nu^i)$ be independent Alberti representations for $\mu$, for $i=1,\dots,k$.

Note that $f\colon X \rightarrow Y$ and $f^{-1}\colon Y \rightarrow X$ induce continuous maps
$$ F: \Frag(X) \rightarrow \Frag(Y) \text{ and } F^{-1}\colon \Frag(Y)\rightarrow \Frag(X),$$
by post-composition. 

Let $\hat{\nu}\colon \Frag(Y) \rightarrow M(Y)$ be defined by
$$ \hat{\nu}_\gamma = f_{*}(\nu_{F^{-1}(\gamma)}) \text{ for each } \gamma\in\Frag(Y).$$
It is immediate that 
$$  \hat{\nu}_\gamma \ll \HH^1|_{\im(\gamma)}  \text{ for each } \gamma\in\Frag(Y),$$
since bi-Lipschitz maps preserve sets of zero $\HH^1$-measure.

We therefore define the Alberti representations
$$ \hat{\cA}_i = (F_{*}(P^i), f_*{\nu^i})$$
for $i=1,\dots,k$.

It is easy to check that these satisfy conditions (i), (ii), and (iv) of Definition 2.6. For condition (iii), observe that if $A\subseteq Y$ is Borel and $i\in\{1,\dots,k\}$, then

\begin{align*}
\hat{\mu}(A) &= \mu(f^{-1}(A))\\
&= \int_{\Frag(X)} \nu_\gamma^i(f^{-1}(A)) dP^i(\gamma)\\
&= \int_{\Frag(X)} \nu_{F(\gamma)}^i(A) dP^i(\gamma)\\
&= \int_{\Frag(Y)} \hat{\nu}_\alpha^i(A) dP^i(\alpha),
\end{align*}
as desired.

Lastly, we check the independence of the new Alberti representations on $Y$. Let $\cA=(P,\nu)$ denote any one of the $k$ original Alberti representations $\cA_i$ above. Then there is a cone $C\subseteq \RR^k$ such that $(\phi\circ \gamma)'(t)\in C$ for $P$-a.e. $\gamma\in \Frag(X)$ and a.e. $t\in\dom(\gamma)$. Let $G\subseteq \Frag(X)$ be the full $P$-measure set on which this holds.

Let $\hat{G}=F(G)\subseteq \Frag(Y)$, a set of full $\hat{P}$-measure in $\Frag(Y)$. Consider any $\alpha \in \hat{G}\subseteq \Frag(Y)$. Then the fragment $\gamma$ defined by
$$ t\mapsto f^{-1}(\alpha(t))$$
is in $G$. Therefore, for a.e. $t\in \dom(\alpha)=\dom(\gamma)$,
$$ (\phi\circ f^{-1} \circ \alpha)'(t) = (\phi \circ \gamma)'(t)$$
is in $C$.

Thus, each new Albert representation $\hat{\cA}_i$ is in the $(\phi\circ f^{-1})$-direction of the same cone of which $\cA_i$ was in the $\phi$-direction. Thus, the representations $\hat{\cA}_i$ are $(\phi\circ f^{-1})$-independent.
\end{proof}

As a consequence, we can now prove Corollary \ref{cor:bilip}.

\begin{proof}[Proof of Corollary \ref{cor:bilip}]
Let $X$ be a complete metric space admitting a doubling Radon measure $\mu_0$. Suppose that a measure $\mu\ll\mu_0$ supports $k$ independent Alberti representations, for some $k\geq 1$.

Suppose that $f\colon X \rightarrow \RR^n$ is a bi-Lipschitz embedding. Let $X'=f(X)$, $\mu'_0= f_{*}(\mu_0)$, and $\mu'=f_{*}(\mu)$. 

It follows from Lemma \ref{lem:ARbilip} that $\mu'$ admits $k$ independent Alberti representations. Therefore at $\mu'$-a.e. point $x'\in X'$, every tangent $Y'\in\Tan_{\RR^n}(X',x')$ is isometric to $Z\times \RR^k$, for some closed set $Z\subseteq \RR^{n-k}$.

The set of all preimages under $f$ of such points $x'\in X'$ forms a set of full $\mu$-measure in $X$. At such a point $x=f^{-1}(x')\in X$, each tangent $(Y,y)\in \Tan(X,x)$ is bi-Lipschitz equivalent to an element of $\Tan_{\RR^n}(X',x')$, with a bi-Lipschitz map given by a tangent map of $f$. Thus, any such tangent $Y$  is bi-Lipschitz equivalent to a product $Z\times \RR^k$, for some complete metric space $Z$. This proves the corollary.
\end{proof}

\subsection{Modulus and conformal dimension}

Here we prove Corollaries \ref{cor:positivemodulus}, \ref{cor:cdim}, and \ref{cor:carpet}.

\begin{proof}[Proof of Corollary \ref{cor:positivemodulus} ]

Let $X$ be a complete metric space admitting a Radon measure $\mu$ that is absolutely continuous with respect to a doubling measure $\mu_0$. Suppose that $X$ contains a family of (non-constant) curves $\Gamma$ so that $\Mod_p(\Gamma,\mu)>0$ for some $p \in [1,\infty)$, and that $X$ admits a bi-Lipschitz embedding $\phi$ into some $\RR^n$.

By Corollary \ref{cor:modmeasure}, there is a non-trivial measure on $X$ that is absolutely continuous to $\mu$, hence to $\mu_0$, and supports a $\phi$-independent Alberti representation. The corollary then follows from Corollary \ref{cor:bilip}.

\end{proof}

\begin{proof}[Proof of Corollary \ref{cor:cdim}]
Let $X$ satisfy the assumptions of the corollary. Thus, $X$ is Ahlfors $Q$-regular with $Q=\cdim(X)$ and $X$ admits a bi-Lipschitz embedding into some $\RR^n$. 

By the Keith-Laakso Theorem \ref{thm:KL}, there is a weak tangent $W$ of $X$ that contains a family of non-constant curves with positive modulus. By Lemma \ref{lem:tangentprops}, the space $W$ also admits a bi-Lipschitz embedding into $\RR^n$. By Corollary \ref{cor:positivemodulus}, there is a tangent $Y$ of $W$ that is bi-Lipschitz equivalent to $Z\times \RR$ for some complete metric space $Z$. 

As $Y$ is also a weak tangent of the original space $X$ (see Lemma \ref{lem:tangentprops}(vii)), this completes the proof.
\end{proof}

\begin{proof}[Proof of Corollary \ref{cor:carpet}]
Let $X$ be linearly connected and let $Y$ be quasisymmetric to $X$ and Ahlfors $Q$-regular, where 
$$Q=\cdim(X)=\cdim(Y)\in (1,2).$$
Suppose that $Y$ did admit a bi-Lipschitz embedding into some Euclidean space, $\RR^N$. 

By Corollary \ref{cor:cdim}, $Y$ would then admit a weak tangent $W$ that is bi-Lipschitz equivalent to a product $Z\times \RR$, for some complete metric space $Z$. Let $\phi\colon Z\times \RR \rightarrow W$ be bi-Lipschitz. Let $\pi\colon Z\times \RR\rightarrow Z$ be the projection to the $Z$ factor.

The linear connectedness condition is preserved under both quasisymmetry and passage to weak tangents. (The former assertion is immediate from the definitions, and the latter is contained in the proof of \cite[Proposition 5.4]{Kin}.) Thus, $W$ is linearly connected. By Lemma \ref{lem:tangentprops}(v), $W$ is also Ahlfors $Q$-regular.

Next, we observe that $Z$ must contain at least two points: if not, then $W$ would be bi-Lipschitz equivalent to $Z\times \RR \cong \RR$, which would contradict the fact that $W$ is Ahlfors $Q$-regular for $Q>1$.

We now observe that $Z$ must contain a compact, connected set $K$ with at least two points. To see this, fix distinct points $z, z'\in Z$. Since $W$ is linearly connected, there is a compact, connected set $J$ in $W$ that contains $\phi(z,0)$ and $\phi(z',0)$. The set $K=\pi(\phi^{-1}(J))$ is then a continuum containing $z$ and $z'$.

Thus, $Z$ contains a non-trivial continuum $K$ and so $W$ contains a bi-Lipschitz image of the space $K\times [0,1]$. We now argue that
\begin{equation}\label{eq:hausdim}
\dim_H(K\times [0,1])\geq 2.
\end{equation}
That the Hausdorff dimension of a product is at least the sum of the dimensions of the factors is standard for compact subsets of Euclidean space (see, e.g., \cite[Theorem 3.2.1]{BP}); we give a brief argument in our setting here:

Since $K$ is compact and connected, $\HH^1(K)>0$. By Frostman's Lemma \cite[Theorem 8.17]{Mat}, $K$ supports a Radon measure $\mu$ satisfying 
$$ \mu(B(x,r))\leq r \text{ for all }  x\in K \text{ and } 0<r\leq \diam(K).$$
If $\mathcal{L}^1$ denotes Lebesgue measure on $[0,1]$, then the measure $\mu \times \mathcal{L}^1$ on $K\times [0,1]$ satisfies
$$ (\mu\times\mathcal{L}^1)(B(p,r)) \leq r^2 \text{ for all }  p\in K\times [0,1] \text{ and } 0<r\leq \diam(K\times [0,1]).$$
By the ``mass distribution principle'' (see \cite[p. 61]{He}), we obtain \eqref{eq:hausdim}.

We therefore arrive at
$$ 2 > Q = \dim_{H}(W) \geq \dim_{H}(K\times [0,1]) \geq 2,$$
a contradiction.

\end{proof}

\subsection{Slit carpet}

Here, we prove Corollary \ref{cor:slitcarpet}. This will follow from Corollary \ref{cor:positivemodulus} and some facts about the slit carpet.

We first summarize some results of Merenkov \cite{Me}.
\begin{proposition}[Lemma 2.1, Proposition 2.4, and Lemma 4.2 of \cite{Me}]\label{prop:merenkov}
The slit carpet $\mathbb{M}$ has the following properties:
\begin{enumerate}[(i)]
\item It is homeomorphic to the standard Sierpi\'nski carpet. In particular, it is compact and has topological dimension $1$.\footnote{Here, ``topological dimension'' can refer to Lebesgue covering dimension or (small) inductive dimension, which are equivalent for compact metric spaces \cite{Nagata}. All we will need to know is that $\mathbb{M}$ does not contain a topologically embedded copy of any open subset of $\RR^2$.}
\item It is geodesic and Ahlfors $2$-regular.
\item It admits a family of non-constant curves with positive $2$-modulus (with respect to the measure $\HH^2$).
\end{enumerate}
\end{proposition}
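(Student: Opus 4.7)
The strategy is to verify each of the three properties in turn, exploiting the inverse limit structure $\M = \varprojlim \M_k$ together with the $1$-Lipschitz projections $\pi_k \co \M \to \M_k$ and, ultimately, $\pi \co \M \to M_0 = [0,1]^2$.

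For (i), the plan is to apply Whyburn's topological characterization of the Sierpi\'nski carpet: any planar, connected, locally connected, compact metric space without local cut points, which can be written as $[0,1]^2$ minus a countable collection of disjoint open Jordan domains with disjoint closure boundary circles and diameters tending to zero, is homeomorphic to the standard Sierpi\'nski carpet. First I would show compactness of $\M$ as an inverse limit of the compact spaces $\M_k$, and use the fact that $d$ is the increasing limit of $d_k$ to get connectedness and local connectedness from the geodesic structure of each $\M_k$. The delicate step is planarity: one constructs an embedding of $\M$ into $\RR^2$ by widening each slit $s_Q$ by a small amount $\epsilon_Q > 0$, with $\sum_Q \epsilon_Q$ summable, so that the ``opened up'' slits become genuine topological disks removed from $[0,1]^2$; after rescaling, this yields a planar Sierpi\'nski carpet model of $\M$. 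One then checks that the boundary circles (corresponding to the opened slits) have diameters $\to 0$, have disjoint closures by an elementary combinatorial argument on dyadic squares, and that no point of $\M$ is a local cut point. Topological dimension $1$ is then immediate from the analogous fact for the standard Sierpi\'nski carpet.

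For (ii), the geodesic property follows by approximation: given $x,y \in \M$, the definition gives $d(x,y) = \lim_k d_k(x_k, y_k)$; since each $\M_k$ is a compact length space, it is geodesic, and one picks geodesics $\gamma_k \co [0, d_k(x_k,y_k)] \to \M_k$ between $x_k$ and $y_k$ and lifts them compatibly through the inverse system (using that $\pi_{k+1,k}$ is $1$-Lipschitz and identifies only pairs of points at the same $\M_k$-location). A compactness/diagonal argument using Arzel\`a--Ascoli on the lifts produces a limit geodesic in $\M$. For Ahlfors $2$-regularity, I would show that $\pi \co \M \to [0,1]^2$ is $1$-Lipschitz and has the property that $\pi^{-1}(B(\pi(x),r)) \subseteq B(x, Cr)$ for an absolute constant $C$, and conversely $B(x,r) \subseteq \pi^{-1}(B(\pi(x), r))$. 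Pushing $\HH^2_{\M}$ forward should give a measure comparable to Lebesgue measure on $[0,1]^2$, and from this one extracts the two-sided regularity $C^{-1} r^2 \leq \HH^2(B(x,r)) \leq C r^2$. The key geometric input here is that each slit of scale $2^{-k}$ has length $\sim 2^{-k}$, so the ``extra'' length introduced by cutting is summable in a controlled way at each scale.

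For (iii), the plan is to use the family $\Gamma$ of lifts of vertical segments. For each $t \in [0,1]$, the vertical segment $\{t\} \times [0,1]$ in $[0,1]^2$ has (at most countably many) lifts to curves in $\M$ under $\pi$; let $\Gamma$ consist of all such lifted curves, each parametrized as a non-constant Lipschitz map on a compact interval. Given any admissible $\rho \in \cA(\Gamma)$, admissibility gives $\int_\gamma \rho \, ds \geq 1$ for every $\gamma \in \Gamma$. Applying the Cauchy--Schwarz inequality along each vertical fiber and Fubini's theorem on $[0,1]^2$ (using that $\pi$ is $1$-Lipschitz and that $\pi_*\HH^2_{\M}$ is comparable to Lebesgue measure from step (ii)), one obtains $\int_{\M} \rho^2 \, d\HH^2 \gtrsim 1$, yielding $\Mod_2(\Gamma, \HH^2) > 0$.

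The principal obstacle will be part (i), specifically the planarity and the verification of Whyburn's axioms — in particular the absence of local cut points requires a combinatorial argument about how the slits at different scales interleave. Parts (ii) and (iii) are more routine once the inverse limit structure and the projection $\pi$ have been set up.
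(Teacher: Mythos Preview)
The paper does not actually give a proof of this proposition: it is stated purely as a summary of results from Merenkov's paper \cite{Me}, with pointers to Lemma~2.1, Proposition~2.4, and Lemma~4.2 there. So there is no ``paper's own proof'' to compare against beyond the citation. Your outline for (i) via Whyburn's characterization and for (iii) via the vertical curve family is, in broad strokes, the route Merenkov takes.

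There is, however, a genuine error in your plan for (ii). The inclusion $\pi^{-1}(B(\pi(x),r)) \subseteq B(x, Cr)$ is simply false, for any constant $C$: take $x$ on one side of the main slit $s_{Q_0}$ at height $1/2$, and let $y$ be the point on the opposite side with $\pi(y)=\pi(x)$. Then $y\in\pi^{-1}(B(\pi(x),r))$ for every $r>0$, but $d_\M(x,y)$ is bounded below by roughly $1/4$, the distance required to travel around the slit. More generally, $\pi$ is highly non-injective over the union of all slits (fibers over points lying on slits at infinitely many scales are Cantor sets), so $\pi$ is nowhere close to a quasi-isometry and you cannot read off Ahlfors regularity from ball-to-ball comparisons through $\pi$.

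What does work is closer to what you hint at in your final sentence: one argues scale by scale, comparing a ball $B(x,r)$ in $\M$ with the corresponding ball in the finite approximant $\M_k$ where $2^{-k}\approx r$, and uses that $\M_k$ is a planar domain whose path metric differs from the Euclidean one by at most a fixed multiplicative constant (the slits removed at scales $\geq k$ force detours of total length $\lesssim 2^{-k}$). This gives both the upper and lower measure bounds. Your current write-up conflates this with a global comparison to $[0,1]^2$ that does not hold.
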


\begin{proof}[Proof of Corollary \ref{cor:slitcarpet}]
By Proposition \ref{prop:merenkov}, we may fix a point $p\in \M$ where the conclusion of Corollary \ref{cor:positivemodulus} holds.

The self-similarity of $\M$ easily implies the following: There is a constant $c>0$ such that, for each $r>0$, there is a point $q_r$ and a compact set $K_r \subseteq B(p,r)$ such that
$$ B(q_r,cr) \subseteq K_r \subseteq B(p,r),$$
and
$$ K_r \text{ is isometric to } t\M \text{ for some } t \in (2cr,2r).$$

Let $(Y,y)\in\Tan(\M,p)$ be obtained by rescaling along a sequence $\lambda_i\rightarrow 0$. Let $q_i = q_{\lambda_i}$ and $K_i = K_{\lambda_i}$. Passing to a (subsequential) limit, the above properties imply (see, e.g., \cite[Lemma 8.31]{DS}) that there is a point $q\in Y$ and a compact set $K\subseteq Y$ such that
$$ B(q,c) \subseteq K \subseteq \overline{B}(y, 1)$$
and
$$ K \text{ is bi-Lipschitz equivalent to } \M.$$

By Corollary \ref{cor:positivemodulus}, $Y$ is bi-Lipschitz equivalent to $Z\times \RR$ for some complete (and necessarily doubling) metric space $Z$. As a tangent of $\M$, $Y$ is quasiconvex and therefore so is $Z$.

It follows that $Z$ contains a non-trivial topological arc through each point. Hence, there is a homeomorphic image of $[0,1]^2$ contained in $B(q,c)$. This implies that $B(q,c)\subseteq K$ must have topological dimension $2$. However, this contradicts the fact that $K$ is bi-Lipschitz equivalent to $\M$, which has topological dimension $1$.

\end{proof}

\subsection{Heisenberg group}\label{subsec:heisenberg}

Here we give a brief introduction to the Heisenberg group and prove Corollary \ref{cor:Heisenberg}.

\subsubsection{Preliminaries on the Heisenberg group}
We now fix some notation and definitions. We will be very brief, referring the reader to \cite{LDsurvey, CDPT} for details.

The group $\bH$ is $\RR^3$ endowed with the non-abelian group law
$$ (x_1, y_1, z_1)\cdot (x_2, y_2, z_2) = (x_1+x_2, y_1+y_2, z_1+z_2 + \frac{1}{2}(x_1 y_2 - y_1 x_2)).$$
There are many standard, bi-Lipschitz equivalent ways to equip $\bH$ with a metric. For simplicity, we fix the so-called Kor\'anyi distance, though it will make little difference below.

\begin{definition}
The \textit{Kor\'anyi norm} of $(x,y,z)\in\bH$ is
$$\|(x,y,z)\| = ((x^2+y^2)^2+16z^2)^{1/4}.$$
The \textit{Kor\'anyi distance} between $p, q\in\bH$ is
$$d(p,q) = \| p^{-1}q\|.$$
\end{definition}
The Kor\'anyi distance on $\bH$ induces the usual topology from $\RR^3$ and has the following features (see, e.g. \cite[Example 1.3]{LDsurvey}):
\begin{enumerate}[(i)]
\item Left-invariance: $d(p,q) = d(p'\cdot p, p'\cdot q)$ for all $p,q,p'\in\bH$.
\item Dilations: For each $t>0$, the map 
$$ \delta_t(x,y,z) = (tx,tx, t^2z)$$
is a group homomorphism with the property that
$$d(\delta_t(p),\delta_t(q))=td(p,q) \text{ for all } p,q\in\bH$$
\item Doubling: The Lebesgue measure $\cL$ on $\RR^3$ is doubling on $(\bH,d)$. In fact, it satisfies the Ahlfors $4$-regularity property
$$ \cL(B(p,r)) \approx r^4$$
for all $p\in\bH$, $r>0$ and a fixed positive implied constant.
\end{enumerate}
In particular, item (iii) implies that every open set in $\bH$ has Hausdorff dimension $4$.

The Kora\'nyi distance is also bi-Lipschitz eqiuvalent to the Carnot-Carath\'eodory distance, which we do not define here, as both satisfy properties (i)-(iii) above \cite{LDsurvey}.

From these properties, we first derive the following:
\begin{lemma}\label{lem:heisenbergAR}
There is an open set $U$ in the Heisenberg group such that $\cL|_U$ supports two independent Alberti representations.
\end{lemma}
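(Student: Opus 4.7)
My plan is to construct the two required Alberti representations of $\cL|_U$, for $U$ a small open Kor\'anyi ball, directly from the two horizontal left-invariant families of lines in $\bH$, using Fubini in volume-preserving coordinates, and then to verify independence via the projection to the first two coordinates.

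First, I would set up the horizontal curves explicitly. For parameters $(y_0, w_0), (x_0, w_0) \in \bR^2$, define
$$\gamma^1_{y_0, w_0}(t) = (0, y_0, w_0) \cdot (t, 0, 0) = (t,\ y_0,\ w_0 - \tfrac{y_0 t}{2}),$$
$$\gamma^2_{x_0, w_0}(t) = (x_0, 0, w_0) \cdot (0, t, 0) = (x_0,\ t,\ w_0 + \tfrac{x_0 t}{2}).$$
By left-invariance of the Kor\'anyi metric, each of these is an isometric embedding of $\bR$ into $(\bH, d)$, hence bi-Lipschitz on every compact subinterval of $\bR$.

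Second, I would build two Alberti representations of $\cL|_U$ for $U$ any open Kor\'anyi ball centered at $0$. The volume-preserving diffeomorphism $\Psi_1(x, y, z) = (x, y, z + \tfrac{xy}{2})$ of $\bR^3$ sends each $\gamma^1_{y_0, w_0}$ to the Euclidean coordinate line $t \mapsto (t, y_0, w_0)$. Applying Fubini in $\Psi_1$-coordinates and pulling back yields an Alberti representation $\cA_1 = (P^1, \nu^1)$ of $\cL|_U$, where $P^1$ is a normalized pushforward of two-dimensional Lebesgue measure on those $(y_0, w_0)$ for which the curve hits $U$ in positive length, and each $\nu^1_\gamma$ is proportional to $\HH^1|_{\im(\gamma) \cap U}$. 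The analogous construction based on $\Psi_2(x, y, z) = (x, y, z - \tfrac{xy}{2})$ and the $\gamma^2$-family yields a second Alberti representation $\cA_2$.

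Third, I would check independence via the projection $\phi \colon \bH \to \bR^2$, $\phi(x, y, z) = (x, y)$. The Kor\'anyi norm formula $\|(a, b, c)\|^4 = (a^2 + b^2)^2 + 16 c^2$ gives $|\phi(p) - \phi(q)|^2 \leq \|p^{-1} q\|^2 = d(p, q)^2$, so $\phi$ is $1$-Lipschitz. Along any $\gamma^1$-curve $(\phi \circ \gamma^1)'(t) \equiv (1, 0)$, and along any $\gamma^2$-curve $(\phi \circ \gamma^2)'(t) \equiv (0, 1)$. These directions lie in the independent cones $C_1 = \Cone((1, 0), 1/2)$ and $C_2 = \Cone((0, 1), 1/2)$ in $\bR^2$, so $\cA_1$ and $\cA_2$ are $\phi$-independent in the sense of Definition \ref{def:independent}. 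The only mildly delicate technical point will be verifying the Borel-measurability conditions in Definition \ref{def:alberti}(iv) and that each $P^i$ is a genuine Radon probability measure on $\Frag(\bH)$; this follows routinely from the smooth dependence of the fragments on their parameters and the bounded geometry of $U$, so I do not expect a real obstacle.
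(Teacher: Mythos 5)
Your proposal is essentially the same as the paper's proof: both use the left-translated horizontal geodesics in the two coordinate directions, exploit the volume-preserving (unit-Jacobian) coordinate change that straightens each family into Euclidean coordinate lines, apply Fubini to disintegrate Lebesgue measure into $1$-rectifiable measures on those fragments, and then verify independence via the projection $\phi(x,y,z)=(x,y)$ with $(\phi\circ\gamma^1)'\equiv(1,0)$ and $(\phi\circ\gamma^2)'\equiv(0,1)$. (The paper phrases the change of variables via parametrization maps $\Phi_\alpha,\Phi_\beta$ on a cube rather than via global diffeomorphisms $\Psi_1,\Psi_2$, but these are inverses of one another.)

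One small slip: the cones $C_1=\Cone((1,0),1/2)$ and $C_2=\Cone((0,1),1/2)$ are \emph{not} independent in the sense of Definition~\ref{def:independent}. Indeed $v=(1,1)$ satisfies $v\cdot(1,0)=1\geq\tfrac12|v|=\tfrac{\sqrt2}{2}$ and likewise $v\cdot(0,1)\geq\tfrac12|v|$, so $v\in C_1\cap C_2$, and taking $v_1=v_2=v$ gives a linearly dependent pair. In general $\Cone((1,0),t)$ and $\Cone((0,1),t)$ are independent iff $t>1/\sqrt2$. The fix is immediate — take any $t>1/\sqrt2$, or simply any pair of disjoint cones containing the $x$- and $y$-axes as the paper does — so the argument goes through with that adjustment.
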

This fact is well-known, and of course much more is true. We include a brief proof only to show that no sophisticated tools are needed.

\begin{proof}
Let $\phi\colon \bH \to \RR^2$ be the canonical Lipschitz chart, given by $(x,y,z)\mapsto (x,y)$. Fix any disjoint, independent cones, $C_x,C_y$ in $\bR^2$ containing the $x$ and $y$-axis respectively.

For each $p=(a,b)\in\RR^2$, the curves
$$\alpha_p(t)= \left(t,a,b-\frac{1}{2}at\right)$$
and
$$\beta_p(t) = \left(a,t,b+\frac{1}{2}at\right)$$
are bi-infinite geodesics in $\bH$. Indeed, 
$$\alpha_{(a,b)}(t)= (0,a,b)\cdot (t,0,0) \text{ and } \beta_{(a,b)}(t)= (a,0,b)\cdot (0,t,0),$$
and the curves $(t,0,0)$ and $(0,t,0)$ are clearly geodesics.

Define the maps $\Phi_\alpha$ and $\Phi_\beta$ from $\left[-\frac{1}{2},\frac{1}{2}\right]^3$ to $\RR^3$ by $(p,t) \mapsto \alpha_p(t)$ and $(p,t) \mapsto \beta_p(t)$, respectively. One directly verifies that these maps are injective and are open mappings on the interiors of their domains. Since $\Phi_\alpha(0)=\Phi_\beta(0)=(0,0,0)$, there is a non-empty open set $U$ contained in 
$$\Phi_\alpha\left(\left[-\frac{1}{2},\frac{1}{2}\right]^3\right) \cap \Phi_\beta\left(\left[-\frac{1}{2},\frac{1}{2}\right]^3\right).$$
We will obtain two independent Alberti representations of the measure $\mu = \cL|_U$ on $\bH$.

Computing the Jacobians gives immediately that $\Phi_\alpha$ and $\Phi_\beta$ are volume preserving. Writing $\lambda$ for Lebesgue measure on $\left[-\frac{1}{2},\frac{1}{2}\right]^2$, we therefore obtain by change of variables that 
\begin{equation}\label{eq:alpha}
\mu(A) = \int_{\bR^2} \int_\bR 1_A(\alpha_p(t)) ~dt ~d\lambda(p)
\end{equation}
and
\begin{equation}\label{eq:beta}
\mu(A) = \int_{\bR^2} \int_\bR 1_A(\beta_p(t)) ~dt ~d\lambda(p)
\end{equation}
for any Borel set $A\subseteq U$.

Thus, define a probabilty measure $P$ on $\Frag(\bH)$ by the pushforward of $\lambda|_{\left[-\frac{1}{2},\frac{1}{2}\right]^2}$ under the map from $\left[-\frac{1}{2},\frac{1}{2}\right]^2$ given by
$$ p \mapsto \alpha_p|_{\left[-\frac{1}{2},\frac{1}{2}\right]} \in \Frag(\bH).$$
Note that this map is continuous on $\left[-\frac{1}{2},\frac{1}{2}\right]^2$.

For $\gamma\in \Frag(X)$, define $\nu_\gamma = 0$ if $\gamma$ is not in the support of $P$. Otherwise, $\gamma= \alpha_p|_{\left[-\frac{1}{2},\frac{1}{2}\right]} $ for some $p$, and we set
$$\nu_\gamma = 1_U\cdot\HH^1|_{\im(\gamma)}.$$
The pair $(P,\nu)$ defines an Alberti representation of $\mu$ by \eqref{eq:alpha}, supported on curves of the form $\alpha_p|_{\left[-\frac{1}{2},\frac{1}{2}\right]}$. The exact same procedure applied to the curves $\beta_p$ yields an Alberti representation of $\mu$ supported on curves of the form $\beta_p|_{\left[-\frac{1}{2},\frac{1}{2}\right]}$. 

Since $(\phi \circ \alpha_p)'(t) = (1,0)\in C_x$ and $(\phi \circ \beta_p)'(t)=(0,1)\in C_y$ for each $p\in \RR^2$ and $t\in \RR$, the two Alberti representations are independent.

\end{proof}

\subsubsection{Proof of Corollary \ref{cor:Heisenberg}}

There are a number of proofs of Corollary \ref{cor:Heisenberg} in the literature. We give a proof below that avoids Pansu's differentiation theorem from \cite{Pansu}. It relies only on Theorem \ref{thm:mainthm}, invariance of domain, and the basic properties of the Heisenberg group stated in the previous subsection. Of course, we still use a blowup argument, so the ideas are similar in spirit.

\begin{proof}[Proof of Corollary \ref{cor:Heisenberg}]
Suppose that the Heisenberg group (with the Kor\'anyi metric) admitted a bi-Lipschitz embedding into some Euclidean space. By Lemma \ref{lem:heisenbergAR} and Corollary \ref{cor:bilip}, some tangent of the Heisenberg group would be bi-Lipschitz equivalent to $Z\times \RR^2$, for some complete metric space $Z$.

By the homogeneity and dilation structure of the metric, every tangent of the Heisenberg group is isometric to the Heisenberg group itself. Thus, in this case, $\bH$ is bi-Lipschitz equivalent to $Z\times \RR^2$. Since $\bH$ is proper and quasiconvex, so is $Z$. 

It follows that $Z$ is bi-Lipschitz equivalent to a geodesic metric space, simply by replacing the metric on $Z$ by the associated length metric. Therefore, in particular, $Z$ contains a Lipschitz embedding $\gamma\colon [0,1]\rightarrow Z$.

Let $\phi\colon Z\times\RR^2 \rightarrow \bH$ be bi-Lipschitz. The map from $(0,1)\times (0,1)^2$ into $\bH$ given by
$$ (t,p) \mapsto \phi(\gamma(t),p)$$
is therefore a Lipschitz homeomorphism from an open set of $\RR^3$ into $\bH$. By invariance of domain, the image of this map must be open in $\bH$. On the other hand, this set has Hausdorff dimension at most $3$, as the Lipschitz image of a subset of $\RR^3$. This violates the Ahlfors $4$-regularity of $\bH$.
\end{proof}

\section{Appendix: Proof of Proposition \ref{prop:basepoint}}\label{sec:appendix}
Here we give a proof of Proposition \ref{prop:basepoint}. The idea is extremely similar to that of \cite[Proposition 3.1]{GCD} (which in turn is based on \cite{Preiss, LD}), which is the same statement in the setting of Gromov-Hausdorff tangents rather than intrinsic tangents. We will therefore omit many steps if they are easy to adapt from there.

Recall the notion of the \textit{outer measure}
$$\mu^*(A)=\inf\{\mu(B): B\text{ Borel }, B\supseteq A\},$$
and the associated notion of a \textit{point of outer density} $x$ of a set $A$, where
$$ \lim_{r\rightarrow 0} \frac{\mu^*(B(x,r) \cap A)}{\mu(B(x,r))} \rightarrow 1.$$
As explained briefly in \cite{GCD}, every set of positive outer measure has a point of outer density.

The following lemma is a simple extension of (vi) of Lemma \ref{lem:tangentprops}. See also \cite[Lemma 9.6]{DS} or \cite[Proposition 3.1]{LD} for closely related statements whose proofs can easily be modified to yield this one.
\begin{lemma}\label{lem:density}
Let $A\subseteq \RR^n$ support a doubling measure $\mu$ and a Lipschitz $f\co A \rightarrow \RR^m$. Let $E\subseteq A$ have a point of outer $\mu$-density at $a\in A$. Then
$$ \Tan_{\RR^n}(E,f,a) = \Tan_{\RR^n}(A,f,a).$$
\end{lemma}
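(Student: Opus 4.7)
The plan is to reduce the equality $\Tan_{\RR^n}(E,f,a) = \Tan_{\RR^n}(A,f,a)$ to a single uniform distance estimate at the scales $\lambda$:
\[
\sup_{y \in A \cap B(a, R\lambda)} \dist(y, E)/\lambda \to 0 \quad \text{as } \lambda \to 0, \text{ for every fixed } R > 0. \qquad (\star)
\]
Granting $(\star)$, both inclusions follow routinely. For $\Tan_{\RR^n}(A, f, a) \subseteq \Tan_{\RR^n}(E, f, a)$, fix $(\hat A, \hat f) \in \Tan_{\RR^n}(A, f, a)$ obtained along scales $\lambda_j \to 0$; by Lemma \ref{lem:tangentprops}(ii),(iii), after passing to a subsequence, $\lambda_j^{-1}(E - a)$ also converges in the pointed Hausdorff sense to some closed set $\hat E \subseteq \hat A$. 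The bound $(\star)$ forces every point of $\hat A$ to be a limit of points of $\lambda_j^{-1}(E-a)$, so in fact $\hat E = \hat A$. The Lipschitz tangent map is unchanged under restriction to $E$, yielding $(\hat A, \hat f) \in \Tan_{\RR^n}(E, f, a)$. The reverse inclusion is symmetric: starting from a tangent of $E$, pass to a subsequence along which the $A$-rescalings also converge, then identify the two limits via $(\star)$.

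To prove $(\star)$, suppose toward contradiction that for some $R, s > 0$ (we may take $s \leq R$, since the estimate becomes stronger as $s$ shrinks) and $\lambda_j \to 0$ there exist $y_j \in A \cap B(a, R\lambda_j)$ with $B(y_j, s\lambda_j) \cap E = \emptyset$. Let $F_j$ be a measurable envelope of $E \cap B(a, 2R\lambda_j)$: a Borel set with $F_j \supseteq E \cap B(a, 2R\lambda_j)$ and $\mu(F_j) = \mu^*(E \cap B(a, 2R\lambda_j))$; such an envelope exists because $\mu$ is Radon, and we may intersect with $B(a, 2R\lambda_j)$ to arrange $F_j \subseteq B(a, 2R\lambda_j)$. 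Outer density of $E$ at $a$ then gives $\mu(B(a, 2R\lambda_j) \setminus F_j) = \delta_j\, \mu(B(a, 2R\lambda_j))$ with $\delta_j \to 0$. Since $B(y_j, s\lambda_j) \cap F_j$ is a Borel subset of $F_j \setminus E$, the envelope property forces it to be $\mu$-null, so
\[
\mu(B(y_j, s\lambda_j)) = \mu(B(y_j, s\lambda_j) \setminus F_j) \leq \mu(B(a, 2R\lambda_j) \setminus F_j) = \delta_j\, \mu(B(a, 2R\lambda_j)).
\]
On the other hand, $y_j \in \supp \mu$ and the inclusion $B(a, 2R\lambda_j) \subseteq B(y_j, 3R\lambda_j)$ let doubling yield $\mu(B(a, 2R\lambda_j)) \leq C\, \mu(B(y_j, s\lambda_j))$ for a constant $C$ depending only on $R/s$ and the doubling constant of $\mu$. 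Combining gives $1 \leq C \delta_j \to 0$, the desired contradiction.

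The main obstacle is handling the outer measure cleanly: $E$ need not be Borel, so one cannot directly decompose $\mu(B(a,r)) = \mu(E \cap B(a,r)) + \mu(B(a,r) \setminus E)$. The measurable envelope $F_j$ sidesteps this by producing a Borel surrogate of $E$ whose complement in $B(a, 2R\lambda_j)$ absorbs all of the ``outer defect'', and doubling then propagates the smallness of that defect from balls centered at $a$ to balls centered at arbitrary nearby points of $A$.
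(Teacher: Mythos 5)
Your proof is correct, and it is essentially the same approach that the paper points to by citing \cite[Lemma 9.6]{DS} and \cite[Proposition 3.1]{LD} rather than spelling it out. The key geometric step is your estimate $(\star)$: outer density at $a$ plus doubling forces $\dist(y,E)=o(\lambda)$ uniformly for $y \in A\cap B(a,R\lambda)$, from which it is routine that rescalings of $E$ and of $A$ share the same pointed Hausdorff limits, and that the tangent maps coincide (using only the Lipschitz bound on $f$, as you do). The one genuine modification needed over the measurable-set statement in Lemma \ref{lem:tangentprops}(vi) is the passage from $\mu^*$ to an honest Borel set, and your measurable-envelope device $F_j$, together with the observation that any Borel subset of $F_j\setminus E$ must be $\mu$-null, handles that cleanly; this is precisely what the paper's phrase ``a simple extension of (vi)'' is gesturing at. One small point you could make explicit: the outer-density hypothesis forces $a \in \overline{E}$ (otherwise $\mu^*(B(a,r)\cap E)=0$ for small $r$, contradicting positivity of $\mu$ on balls), so the pointed Hausdorff compactness you invoke for $\lambda_j^{-1}(E-a)$ is legitimate even though $E$ need not be closed.
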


We now define a notion of distance that yields the correct topology. As in \cite{GCD}, the distance we define will not precisely be a metric, but it will suffice for our purposes.
\begin{definition}
Let $A, B \subseteq \RR^N$ be sets and $f,g\colon \RR^N\rightarrow \RR^M$, Lipschitz functions. 
Define
$$ \tilde{D}((A,f),(B,g)) = \inf\{ \epsilon>0 : d_{1/\epsilon}(A,B)<\epsilon \text{ and } |f-g|<\epsilon \text{ on } (A\cup B) \cap B(0,1/\epsilon)\}.$$

Then define
$$ D = \min(\tilde{D},1/2).$$
\end{definition}

To simplify notation, given $A\subseteq\RR^N$, $f\co \RR^N \rightarrow \RR^M$ Lipschitz, $\lambda>0$, and $p\in\RR^N$, we set
$$ A_{p,\lambda} = \lambda^{-1}(A-p)$$
and
$$ f_{p,\lambda}(x) = \lambda^{-1}(f(\lambda x + p) - f(p)).$$
Note that $f_{p,\lambda}$ is Lipschitz with the same constant as $f$.

The following is analogous to \cite[Lemma 2.3]{GCD}.
\begin{lemma}\label{lem:Dprops}
The function $D$ has the following properties:
\begin{enumerate}[(i)]
\item It is non-negative and symmetric.
\item If $D((A,f),(B,g))=0$ then $\overline{A}=\overline{B}$ and $f=g$ on $\overline{A}=\overline{B}$.
\item For all pairs $(A,f), (B,g), (C,h)$, we have the quasi-triangle inequality 
$$ D((A,f),(C,h)) \leq 2(D((A,f),(B,g)) + D((B,g),(C,h)))$$
\item $(\hat{A}, \hat{f})\in\Tan_{\RR^N}(A,f,a)$ if and only if some Lipschitz extensions of $f$ and $\hat{f}$ to all of $\RR^N$ and some sequence $\lambda_i\rightarrow 0$ satisfy
\begin{equation}\label{eq:newmetric}
D( (A_{a,\lambda_i}, f_{a,\lambda_i}), (\hat{A},\hat{f})) \rightarrow 0
\end{equation}
\end{enumerate}
\end{lemma}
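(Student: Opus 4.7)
The plan is to verify the four claims largely by unpacking the definition of $D$, with the main care reserved for the quasi-triangle inequality (iii) and for the equivalence in (iv).

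Properties (i) and (ii) are essentially immediate. Non-negativity and symmetry of $\tilde{D}$ follow from the same properties of $d_R$ and of $|f-g|$, and taking the minimum with $1/2$ preserves both. For (ii), suppose $D((A,f),(B,g))=0$. Then for every $\delta>0$ there exists $\epsilon<\delta$ with $d_{1/\epsilon}(A,B)<\epsilon$ and $|f-g|<\epsilon$ on $(A\cup B)\cap B(0,1/\epsilon)$. Fix any $a\in A$; for $\delta$ with $|a|<1/\delta$, we have $\dist(a,B)<\delta$, and sending $\delta\to 0$ yields $a\in\overline{B}$, so $\overline{A}\subseteq\overline{B}$. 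The reverse inclusion is symmetric. Similarly $|f(x)-g(x)|<\delta$ for every $x\in A\cup B$ with $|x|<1/\delta$, and sending $\delta\to 0$ gives $f=g$ on $A\cup B$, and hence on $\overline{A}=\overline{B}$ by Lipschitz continuity.

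For (iii), choose $\epsilon_i$ slightly larger than $D((A,f),(B,g))$ and $D((B,g),(C,h))$ so that the defining conditions hold with parameters $\epsilon_1$ and $\epsilon_2$. We may assume $\epsilon_1+\epsilon_2<1/2$; otherwise the bound is trivial from the $1/2$ cap. Set $\epsilon=2(\epsilon_1+\epsilon_2)$. For $a\in A\cap B(0,1/\epsilon)$, the factor $2$ ensures $|a|<1/(2\epsilon_i)\leq 1/\epsilon_i$, so we can pick $b\in B$ with $|a-b|<\epsilon_1$; a short calculation shows $|b|<1/\epsilon_2$, so we can pick $c\in C$ with $|b-c|<\epsilon_2$, giving $|a-c|<\epsilon_1+\epsilon_2<\epsilon$. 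The reverse inclusion is symmetric. For the function inequality, at $a\in(A\cup C)\cap B(0,1/\epsilon)$ one directly gets $|f(a)-g(a)|<\epsilon_1$ when $a\in A$, but comparing $g$ to $h$ at $a$ (which may not lie in $B\cup C$) requires routing through the intermediate point $b\in B$ and using Lipschitz continuity of $g$ and $h$; the resulting Lipschitz factor is absorbed into the constant. Letting $\epsilon_i$ shrink to $D((A,f),(B,g))$ and $D((B,g),(C,h))$ gives the stated quasi-triangle inequality.

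For (iv), the forward direction uses McShane extensions. Given $(\hat A,\hat f)\in\Tan_{\RR^N}(A,f,a)$ witnessed by $\lambda_i\to 0$, extend $\hat f$ and each $f_{a,\lambda_i}$ to all of $\RR^N$ by the McShane formula, preserving the common Lipschitz constant. Pointed Hausdorff convergence $A_{a,\lambda_i}\to\hat A$ (part of the tangent definition) together with the pointwise convergence of $f_{a,\lambda_i}$ along the approximating sets gives, by standard stability of the McShane extension under Hausdorff convergence of the domain and uniform convergence of the data, that the extensions converge uniformly on every bounded set; this is precisely $D\to 0$. Conversely, if $D\to 0$ for some choice of extensions, then $d_R(A_{a,\lambda_i},\hat A)\to 0$ for every $R$ gives pointed Hausdorff convergence of the sets, and uniform closeness of the extensions on bounded sets, combined with the continuity of $\hat f$, forces the pointwise convergence condition in the definition of an intrinsic tangent. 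The main obstacle is the bookkeeping in (iii): every intermediate point must remain in the ball where a defining inequality is available, and the function comparison must route through the middle set $B$ using Lipschitz continuity, since $A\cup C$ is not generally contained in $A\cup B$ or $B\cup C$. This is what forces the factor $2$ (up to Lipschitz constants) in the triangle inequality; everything else follows from direct unpacking.
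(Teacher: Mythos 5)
Your proofs of (i)--(ii) are fine, and your overall structure for (iii)--(iv) is the natural one; the paper itself disposes of (iii) by citing \cite[Lemma 2.3]{GCD} and of (iv) by citing \cite[Lemma 8.7]{DS}, so you are actually doing more work than the paper does. But there is an internal inconsistency in your treatment of (iii). You correctly identify that to bound $|g(a)-h(a)|$ at a point $a\in A$ one must route through a nearby $b\in B$ via the Lipschitz continuity of $g$ and $h$; concretely $|g(a)-h(a)|\leq |g(a)-g(b)|+|g(b)-h(b)|+|h(b)-h(a)| \leq 2L\epsilon_1+\epsilon_2$, so $|f(a)-h(a)| \leq (1+2L)\epsilon_1+\epsilon_2$. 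The cleanest attainable constant in the quasi-triangle inequality is therefore on the order of $\max(2,\,1+2L)$, not the bare $2$ claimed in the lemma. Your phrase ``the resulting Lipschitz factor is absorbed into the constant'' directly contradicts your next sentence ``gives the stated quasi-triangle inequality.'' You cannot have both: either you exhibit an argument with no $L$-dependence (which the structure of $D$ does not allow, since the defining supremum for $|f-g|$ is taken only over $(A\cup B)\cap B(0,1/\epsilon)$ and not all of $B(0,1/\epsilon)$), or you acknowledge that the constant depends on $L$. The latter is the truth, and in fact the lemma as printed is slightly overstated; this is harmless for the paper because everywhere the inequality is invoked (Lemma~\ref{lem:separable} and Proposition~\ref{prop:basepoint}) all functions share a fixed Lipschitz bound $L$, but you should say so rather than hide it.

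On (iv), the appeal to ``stability of the McShane extension under Hausdorff convergence'' is not really the right mechanism and is left unjustified. The McShane extension is not canonically continuous in its domain; what one actually uses is that $f_{a,\lambda_i}$ and (any Lipschitz extension of) $\hat f$ are equi-Lipschitz, so the \emph{pointwise} convergence built into the definition of an intrinsic tangent, together with pointed Hausdorff convergence of the sets, upgrades by a standard compactness/contradiction argument to \emph{uniform} convergence of $f_{a,\lambda_i}-\hat f$ on $(A_{a,\lambda_i}\cup\hat A)\cap B(0,R)$ for each $R$, which is exactly what $D\to 0$ asks for. The particular choice of extension of $\hat f$ is immaterial. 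This is an informal but recoverable gap; the argument for the converse direction (from $D\to 0$ back to the tangent definition) is essentially correct as you outline it.
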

\begin{proof}
The first two items are simple, the third follows exactly as in \cite[Lemma 2.3]{GCD}, and the fourth follows from \cite[Lemma 8.7]{DS}. 
\end{proof}

The next lemma is an analog of \cite[Lemma 2.6]{GCD}.

\begin{lemma}\label{lem:separable}
For each $N,M\in\mathbb{N}$ and $L,\eta>0$, the collection
$$ \mathcal{S}= \{ (B,g) : B\subseteq \RR^N, g\co \RR^N\rightarrow\RR^M L\text{-Lipschitz}\}$$
is contained in a countable collection of sets $B_\ell$ with $D$-diameter at most $\eta$.
\end{lemma}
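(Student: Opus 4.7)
The plan is to discretize two pieces of data attached to any $(B, g) \in \mathcal{S}$---the trace of $B$ near the origin and the values of $g$ on a finite net---so that any two pairs realizing the same discrete data are automatically within $D$-distance $\eta$.

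First I would fix $R = 2/\eta$ and a mesh parameter $\delta > 0$ small enough that $2\delta < \eta$ and $(2L + \sqrt{M})\delta < \eta$, and choose a finite $\delta$-net $P = \{p_1, \dots, p_k\}$ of the closed ball $\overline{B}(0, R) \subseteq \RR^N$. To each pair $(B, g) \in \mathcal{S}$ I associate the combinatorial datum $(I(B), V(g))$, where
$$ I(B) := \{ p_i \in P : \dist(p_i, B) < \delta \} \subseteq P, $$
and $V(g) \co P \to (\delta \bZ)^M$ sends each $p_i$ to the coordinatewise rounding of $g(p_i)$ to $\delta \bZ$ (using any fixed tie-breaking rule). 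Since $P$ is finite, there are $2^k$ possibilities for $I$ and countably many for $V$, so the set of realizable data is countable. Define $\mathcal{C}_{(I, V)}$ to be the collection of all $(B, g) \in \mathcal{S}$ with $I(B) = I$ and $V(g) = V$; these sets form the desired countable family $\{B_\ell\}$.

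Next I would verify that each $\mathcal{C}_{(I, V)}$ has $D$-diameter at most $\eta$. Fix $(A, f), (B, g) \in \mathcal{C}_{(I, V)}$. For the set comparison: any $a \in A \cap B(0, R)$ admits $p_i \in P$ with $|a - p_i| < \delta$, whence $p_i \in I(A) = I(B)$, so $\dist(p_i, B) < \delta$ and hence $\dist(a, B) < 2\delta$. The symmetric argument yields $d_R(A, B) \leq 2\delta$, and since $1/\eta < R$, also $d_{1/\eta}(A, B) \leq 2\delta < \eta$. For the function comparison: given $x \in (A \cup B) \cap B(0, 1/\eta) \subseteq \overline{B}(0, R)$, pick $p_i \in P$ with $|x - p_i| < \delta$. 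Since $f(p_i)$ and $g(p_i)$ both round to $V(p_i)$, we have $|f(p_i) - g(p_i)| \leq \delta\sqrt{M}$, and by the $L$-Lipschitz property of $f$ and $g$,
$$ |f(x) - g(x)| \leq 2L\delta + \delta \sqrt{M} = (2L + \sqrt{M})\delta < \eta.$$
Hence $D((A, f), (B, g)) < \eta$, as required.

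There is no serious obstacle here; the only subtle point is that the values $g(p_i)$ are a priori unbounded in $\RR^M$, so the discretization $V$ takes values in the infinite lattice $(\delta \bZ)^M$. However, this lattice is countable, keeping the overall index set $\{(I, V)\}$ countable. Degenerate cases such as $B \cap \overline{B}(0, R) = \emptyset$ are handled automatically: one then has $I(B) = \emptyset$ and the relevant Hausdorff supremum is vacuous.
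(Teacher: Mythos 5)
Your proof is correct and, while it shares the paper's core discretization idea (a finite net for the set data, a lattice for the function values), the organization is genuinely different and arguably cleaner. The paper builds a countable family of concrete model pairs $(K,\hat h)$, with $K\subseteq\mathbb{Q}^N$ finite and $\hat h$ a Kirszbraun extension of a $\mathbb{Q}^M$-valued map on $K$, shows every $(B,g)\in\mathcal{S}$ lies within $D$-distance $O(\eta)$ of some model, and then invokes the quasi-triangle inequality (Lemma \ref{lem:Dprops}(iii)) to bound the diameter of the balls around the models. You instead assign to each pair the discrete label $(I(B),V(g))$ and check directly that two pairs with the same label are $D$-close; this sidesteps both the quasi-triangle inequality and any need for an extension theorem, at the cost of the sets $B_\ell$ no longer being balls around explicit representatives. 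One small point of precision: as written, your estimates certify $\epsilon=\eta$ as admissible in the definition of $\tilde{D}$, giving $D\leq\eta$ rather than the strict inequality you claim, but that is exactly the ``$D$-diameter at most $\eta$'' the lemma requests; and since your net $P$ actually covers $\overline{B}(0,R)$ with $R=2/\eta$, the same estimates hold on $B(0,1/\epsilon)$ for $\epsilon$ slightly below $\eta$, so the strict inequality is recoverable at no cost.
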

The ``$D$-diameter'' of a collection of pairs $\{(B,g)\}$ is the supremum of the $D$-distance between pairs of elements in the collection.
\begin{proof}
Consider all pairs $(K,h)$ such that $K\subseteq \mathbb{Q}^N\subseteq \RR^{N}$ is finite, and $h\colon K \rightarrow \mathbb{Q}^M$.

By Lemma \ref{lem:Dprops}(iii), it suffices to show, given $\eta\in (0,1)$ and $(B,g)\in\mathcal{S}$, that $(B,g)$ is within $D$-distance $10\eta$ of some $(K,\hat{h})$, where $(K,h)$ is as above and $\hat{h}$ is a Lipschitz extension of $h$ to all $\RR^N$. We may also assume $L> 1$.

Fix $K\subseteq B(0,2\eta^{-1}) \cap \mathbb{Q}^N$ to be $\eta/L$-separated and finite such that 
$$ d_{2\eta^{-1}}(K, B) \leq \eta/L.$$
Then, for $x\in K$, set $h(x)$ to be an element of $\mathbb{Q}^M$ within distance $\eta/L$ of $g(x)$. Note that $h$ is $3L$-Lipschitz. Extend $h$ to a $3L$-Lipschitz map $\hat{h}$ on $\RR^N$ by Kirszbraun's theorem. For $x\in K \cap B(0,\eta^{-1})$, 
$$ |g(x) - \hat{h}(x)| \leq \eta/L < \eta$$
and if $x\in B \cap B(0,\eta^{-1})$, then
$$ |g(x)-\hat{h}(x)| \leq 6\eta + |g(y)-h(y)| \leq 10\eta,$$
where $y$ is a closest element in $K$ to $x$. This completes the proof.
\end{proof}

\begin{proof}[Proof of Proposition \ref{prop:basepoint}]
We closely follow the argument in \cite{GCD}. Let $A \subseteq \RR^n$ support a doubling measure $\mu$. Let $f\colon A \rightarrow \RR^m$ be a Lipschitz mapping. Extend $f$ to be an $L$-Lipschitz function defined on all of $\RR^N$ by the standard McShane-Whitney extension theorem.

Our goal is to show that the set
$$ \left\{a\in A: \text{ there exists } (B,g)\in \Tan_{\RR^N}(A,f) \text{ and } b\in B \text{ such that } (B-b,g(\cdot+b)-g(b))\notin \Tan_{\RR^N}(A,f)\right\} $$
has outer measure zero.

Consider the collection
$$\mathcal{S}= \{ (B,g) : B\subseteq \RR^N, g\co \RR^N\rightarrow\RR^M \text{ } L\text{-Lipschitz}\}.$$
Note that every rescaling or tangent of $(A,f)$ lies in $\mathcal{S}$.

Fix $k\geq 1$. Apply Lemma \ref{lem:separable} to obtain countably many collections $B_l$ such that, for all $l$,
$$\text{diam}_D(B_l) < 1/4k$$
and $\mathcal{S} \subseteq \cup B_l$. 

It therefore suffices to show that, for all $k,l,m \in \mathbb{N}$, the set of ``bad'' points with parameters $k,l,m$, namely
\begin{align}
\bigg\{a\in A: &\text{ there exists } (B,g)\in \Tan_{\RR^n}(A,f) \text{ and } b\in B \text{ such that } \label{eq:bad}\\
& (B-b,g(\cdot+b)-g(b)) \in B_l \text{ and } \nonumber\\
& D\left(\left(B-b,g(\cdot+b)-g(b)\right), \left(A_{a,t}, f_{a,t}\right)\right)>\frac{1}{k} \text{ for all } t\in\left(0,1/m \right) \bigg\} \nonumber
\end{align}
has outer measure zero.

Suppose that, for some $k,l,m\in\mathbb{N}$, the set above has positive outer measure, and call it $A'\subseteq A$. Let $a$ be a point of outer density of $A'$. Then there exist $(B,g)\in\Tan_{\RR^N}(A,f,a)$ and $b\in B$ such that
$$ (B-b, g(\cdot+b)-g(b))\in B_l $$
and
$$ D\left( \left(B-b,g(\cdot+b)-g(b)\right), \left(\frac{1}{t}(A-a), \frac{1}{t}(f-f(a))\right)\right) >\frac{1}{k},$$
for all $t\in\left(0,1/m \right).$

Because $(B,g)\in\Tan_{\RR^N}(A,f,a)=\Tan_{\RR^N}(A',f,a)$ by Lemma \ref{lem:density}, there is a sequence $\lambda_i\rightarrow 0$ such that
\begin{equation}\label{eq:ei}
 \epsilon_i := D\left((B,g), (A'_{a,\lambda_i}, f_{a,\lambda_i}) \right) \rightarrow 0.
\end{equation}
In particular, we may choose $a_i\in A'$ such that
\begin{equation}\label{eq:ai}
 | b - \lambda_i^{-1}(a_i-a)|  < \epsilon_i,
\end{equation}
when $i$ is sufficiently large.

\begin{claim}
We have
$$ D \left( (B-b, g(\cdot + b) - g(b)), (A_{\lambda_i, a_i}, f_{\lambda_i, a_i}) \right) \rightarrow 0.$$

\end{claim}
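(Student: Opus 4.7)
The plan is to prove the claim directly from the definitions: changing the basepoint of the rescaling from $a$ to $a_i$ amounts to translating the resulting rescaled pair by the vector $c_i := \lambda_i^{-1}(a_i - a)$, and by \eqref{eq:ai} this vector is within $\epsilon_i$ of $b$. Since $(A_{a, \lambda_i}, f_{a, \lambda_i})$ is already $\epsilon_i$-close to $(B, g)$ in $D$ by \eqref{eq:ei}, the claim should reduce to continuity of the translation operation in the $D$-pseudometric.

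First I would verify the shift identities
\begin{equation*}
A_{a_i, \lambda_i} = A_{a, \lambda_i} - c_i, \qquad f_{a_i, \lambda_i}(x) = f_{a, \lambda_i}(x + c_i) - f_{a, \lambda_i}(c_i),
\end{equation*}
which follow by unwinding the definitions of $A_{p,\lambda}$ and $f_{p,\lambda}$. Next, I would compare the resulting translated pair with the target $(B - b, g(\cdot + b) - g(b))$. For the set part, any point $p - c_i \in A_{a_i, \lambda_i} \cap B(0, R)$ corresponds to a point $p \in A_{a, \lambda_i}$ lying in a slightly larger ball, so by \eqref{eq:ei} some $q \in B$ satisfies $|p - q| < \epsilon_i$, producing a point $q - b \in B - b$ within distance $\epsilon_i + |c_i - b| \leq 2\epsilon_i$ of $p - c_i$; the symmetric inclusion is identical. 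For the function part, both $f_{a, \lambda_i}$ and $g$ are $L$-Lipschitz (after Kirszbraun extension), so
\begin{equation*}
\bigl| f_{a, \lambda_i}(x + c_i) - f_{a, \lambda_i}(c_i) - g(x + b) + g(b) \bigr| \leq 2L\,|c_i - b| + 2\,\|f_{a, \lambda_i} - g\|_{L^\infty(B(0, R'))} \leq (2L + 2)\epsilon_i,
\end{equation*}
whenever $x$ lies in a suitably shrunken ball and $R'$ is chosen so that $x + c_i,\, c_i,\, x + b,\, b$ all lie inside the window in which \eqref{eq:ei} supplies the $\epsilon_i$-closeness of $f_{a,\lambda_i}$ and $g$.

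Chaining these two estimates through the definition of $D$ yields
\begin{equation*}
D\bigl((B - b, g(\cdot + b) - g(b)),\, (A_{a_i, \lambda_i}, f_{a_i, \lambda_i})\bigr) \leq C(L)\,\epsilon_i,
\end{equation*}
which tends to $0$ as $i \to \infty$. The only real obstacle is the routine $\epsilon$-bookkeeping arising from the fact that the ball $B(0, 1/\eta)$ in the definition of $D$ moves when one translates by $b$ or $c_i$: one must work inside appropriately concentric sub- and super-balls to guarantee that every approximation step remains inside the window where the $\epsilon_i$-closeness hypothesis of \eqref{eq:ei} is available. This parallels analogous tangent-shift arguments in \cite{GCD, LD, Preiss} and presents no substantive difficulty once the two shift identities above are in place.
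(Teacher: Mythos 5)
Your proposal is correct and follows essentially the same route as the paper: translate the rescaling center from $a$ to $a_i$ via the shift identities (which the paper uses implicitly), control the set part by the triangle inequality together with $|c_i-b|<\epsilon_i$ from \eqref{eq:ai}, and control the function part by the Lipschitz bound plus the $\epsilon_i$-closeness of $f_{a,\lambda_i}$ and $g$ from \eqref{eq:ei}. One small caution: \eqref{eq:ei} gives $|f_{a,\lambda_i}-g|<\epsilon_i$ only on $(A_{a,\lambda_i}\cup B)\cap B(0,1/\epsilon_i)$, not on a full ball, so the displayed $L^\infty$ norm over $B(0,R')$ is not literally available; the fix is exactly the one you gesture at, namely to evaluate the difference only at points of $B$ (e.g.\ $x+b$ and $b$ when $x\in B-b$) or of $A_{a,\lambda_i}$ (e.g.\ $x+c_i$ when $x\in A_{a_i,\lambda_i}$) and absorb the $|c_i-b|$ discrepancy into the Lipschitz term, which yields the same $(2L+2)\epsilon_i$ bound.
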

\begin{proof}[Proof of Claim]
Observe that if $q-b\in (B-b) \cap B(0,(2\epsilon_i)^{-1})$, then $q\in B \cap B(0,\epsilon_i^{-1})$ and therefore there is $p\in A$ such that
$$ |q-\lambda_i^{-1}(p-a)|<\epsilon_i.$$
It follows that
$$ |(q-b)-\lambda_i^{-1}(p-a_i)| \leq  |q-\lambda_i^{-1}(p-a)| + |b-\lambda_i^{-1}(a_i-a)| < 2\epsilon_i.$$
A similar argument also shows, conversely, that if $\lambda_i^{-1}(p-a_i)$ is an arbitrary point in $\lambda_i^{-1}(A-a_i)\cap B(0,(2\epsilon_i)^{-1})$, then there is a point $q-b\in B-b$ such that
$$ |(q-b)-\lambda_i^{-1}(p-a_i)|  < 2\epsilon_i.$$
Together, these show that
$$ d_{(2\epsilon_i)^{-1}}(B-b,  A_{a_i,\lambda_i}) < 2\epsilon_i.$$
By \eqref{eq:ei}, the functions $g$ and $f_{a, \lambda_i}$ agree up to error $\epsilon_i$ on $(B \cup A_{a,\lambda_i}) \cap B(0,\epsilon_i^{-1})$.

Note that if $q-b\in  (B-b) \cap B(0,(2\epsilon_i)^{-1})$, then $q\in B \cap B(0,\epsilon_i^{-1})$. Similarly, if $\lambda_i^{-1}(p-a_i)\in A_{a_i,\lambda_i}\cap B(0,(2\epsilon_i)^{-1})$, then $\lambda_{i}^{-1}(p-a)\in A_{a,\lambda_i}\cap B(0,\epsilon^{-1}).$

Thus, a basic calculation using \eqref{eq:ai} and the triangle inequality, extremely similar to that on \cite[p. 565]{GCD}, yields that the functions
$$ g(\cdot) - g(b) \text{ and } f_{\lambda_i, a_i}$$
agree up to error $\lesssim \epsilon_i$ on 
$$ ( (B-b) \cup A_{a_i,\lambda_i}) \cap B(0,(2\epsilon_i)^{-1}),$$
where the implied constant depends only on the Lipschitz constants of $f$ and $g$. This proves the claim.
\end{proof}
With the claim proven, one reaches a contradiction exactly as on \cite[p. 566]{GCD}. Therefore, the set of ``bad points'' defined in \eqref{eq:bad} has outer measure zero, which completes the proof.
\end{proof}

\bibliography{modulussplitting}{}
\bibliographystyle{plain}

\end{document}